\numberwithin{equation}{section}
\newtheorem{theorem}{Theorem}[section]
\newtheorem{lemma}[theorem]{Lemma}
\newtheorem{remark}{Remark}[section]
\newtheorem{proposition}[theorem]{Proposition}
\begin{document}{\tiny }
	\title[Stability of conservation law]{Time-asymptotic stability of composite waves of degenerate Oleinik shock and rarefaction for non-convex conservation laws with Cattaneo's law}
	\author{Yuxi Hu and Ran Song}
	\thanks{\noindent Yuxi Hu, Department of Mathematics, China University of Mining and Technology, Beijing, 100083, P.R. China,
		yxhu86@163.com\\
		\indent Ran Song, Department of Mathematics, China University of Mining and Technology, Beijing, 100083, P.R. China, 
		ran.song.math@foxmail.com.
	}
	\date{}
	\maketitle
	\renewcommand{\abstractname}{\textbf{Abstract}}
	\begin{abstract}
		This paper examines the large-time behavior of solutions to a one-dimensional conservation law featuring a non-convex flux and an artificial heat flux term regulated by Cattaneo's law, forming a 2$\times$2 system of hyperbolic equations. Under the conditions of small wave strength and sufficiently small initial perturbations, we demonstrate the time-asymptotic stability of a composite wave that combines a degenerate Oleinik shock and a rarefaction wave. The proof utilizes the Oleinik entropy condition, the a-contraction method with time-dependent shifts, and weighted energy estimates.\par
		\ \\
		\noindent{\textbf{Keywords:}} Conservation law; Asymptotic stability; Composite wave; Relative entropy; Large-time behavior\par
		\ \\
		\noindent{{\bf AMS classification code}: 35B35, 35L65}
	\end{abstract}

	\section{Introduction}
	This paper investigates the large-time behavior of solutions to the one-dimensional scalar conservation law with Cattaneo's law, governed by the system \cite{ref29,ref5,ref7}
	\begin{equation}\label{yfc}
		\left\{
			\begin{aligned}
				&u_t+f(u)_x=q_{x},\\
				&\tau q_{t}+q=\mu u_{x},
			\end{aligned}\right.
		\end{equation}
		under the initial condition
		\begin{align}\label{IC}
			\left(u(t,x),q(t,x)\right)|_{t=0}=(u_{0}(x),q_{0}(x))\to(u_{\pm},0),\quad x\to \pm \infty,
		\end{align}
		where $(u(t,x),q(t,x))\in\mathbb{R}\times \mathbb{R}$ are the unknown functions, $(u_0,q_0)$ denotes the initial data, $\tau$ and $\mu$ are the relaxation time and the viscosity coefficient, and $u_{\pm}$ are prescribed constants. Without loss of generality, we assume that $\mu=1$ in the sequel.
		
		Equation \eqref{yfc}$_2$ is known as Cattaneo's law, in order to describe the
		finite speed of heat conduction. The relaxation time $\tau$ embodies a finite lag between a change in temperature gradient and the heat flux response. For the conservation law with Cattaneo's law, we refer to \cite{ref8,ref3,ref5} and the references therein.
		
		For solutions with distinct far-field states $u_- \neq u_+$, the large-time behavior of the system \eqref{yfc}-\eqref{IC} is governed by the corresponding inviscid Riemann problem:
		\begin{align}\label{RP}
			\begin{cases}
				u_t + f(u)_x = 0, \\
				u(0, x) = u_0(x) = 
				\begin{cases}
					u_-, & x < 0, \\
					u_+, & x > 0.
				\end{cases}
			\end{cases}
		\end{align}
		
		For a strictly convex flux $f(u)$, the solution to \eqref{RP} is either a single shock or a rarefaction wave. The time-asymptotic stability of such viscous wave patterns for conservation laws has been extensively studied since the pioneering work of Il'in and Ole\u{i}nik \cite{ref9}. In contrast, for non-convex fluxes, the Riemann solution exhibits a richer and more complex structure, which may include composite waves consisting of Oleinik shocks, rarefactions, and contact discontinuities.
		
		A shock wave propagating with speed $\sigma$ and connecting the Riemann states $u_{\pm}$ is admissible if it satisfies the Rankine-Hugoniot condition
		\begin{equation}\label{RH}
			\sigma=\frac{f\left(u_{+}\right)-f\left(u_{-}\right)}{u_{+}-u_{-}},
		\end{equation}
		and the Oleinik entropy condition
		\begin{equation}\label{Oleinik}
			\sigma \leq \frac{f(u)-f\left(u_{-}\right)}{u-u_{-}}, \quad \text{for all } u \text{ between } u_{-} \text{ and } u_{+}.
		\end{equation}
		For the specific cubic flux $f(u) = u^3$, condition \eqref{Oleinik} implies the following admissibility criteria: \\if $u_{-} < 0$, then 
		$$u_{-}<u_{+}\leq -\frac{u_{-}}{2};$$
		or if $u_{-} > 0$, then 
		$$-\frac{u_{-}}{2} \leq u_{+}<u_{-}.$$ The limiting case $u_{+}=-\frac{u_{-}}{2}$ is termed a degenerate Oleinik shock. A direct calculation for this degenerate case yields the characteristic speed relation
		\begin{align}
			f'(u_{-}) > \sigma = f'(u_{+}).
		\end{align}
		
		We note that when $\tau=0$ (i.e. Fourier's law), the system \eqref{yfc} reduces formally to the classical viscous conservation law
		\begin{align}\label{tuihua}
			u_t+f(u)_x=\mu u_{xx},
		\end{align}
		with the initial condition
		\begin{align}\label{thic}
			u(0,x)=u_0(x)\to u_\pm,\quad x\to\pm\infty.
		\end{align}
		
		The asymptotic behavior of the solutions to equation \eqref{tuihua} has been widely studied. When the flux $f(u)$ is strictly convex, there is a vast literature on the time-asymptotic stability of a single viscous shock or rarefaction wave for $\eqref{tuihua}$-$\eqref{thic}$, see \cite{ref1,ref2,ref21,ref27,ref10}. For viscous shock waves, stability is classical for one-dimensional scalar equations \cite{ref1,ref2,ref27} and has been extended to multi-dimensional scalar cases \cite{ref21}. For rarefaction waves, their asymptotic behavior and stability in scalar conservation laws have been thoroughly analyzed \cite{ref10}. Matsumura and Yoshida \cite{ref14} investigate the asymptotic behavior in time of solutions to the Cauchy problem for a one-dimensional viscous conservation law in which the ﬂux function is convex but linearly degenerate on some intervals. Hattori \cite{ref44} established decay rates for the Burgers equation.
		
		When the flux $f(u)$ is non-convex, there has been extensive research on the stability of a single viscous Oleinik shock. In the case where the non-convex flux $f(u)$ has one inflection point, Kawashima \cite{ref34} established the stability of non-degenerate Oleinik shocks using the anti-derivative and weighted $L^2$-energy methods, and Mei \cite{ref19} subsequently proved the stability for the degenerate case. For a general non-convex flux $f(u)$, Matsumura and Nishihara \cite{ref38} proved the stability of both non-degenerate and degenerate Oleinik shock waves. Liu \cite{ref30} also investigated the stability of non-degenerate shock waves with a different weight function, while Huang and Xu \cite{ref18} established the decay rate toward the viscous shock profile. Furthermore, Freist\"uhler and Serre \cite{ref17} derived an interesting $L^1$ stability theorem, and Jones-Gardner-Kapitula \cite{ref12} and Weinberger \cite{ref25} also studied the time-asymptotic stability of viscous shock waves using various methods. For a single rarefaction wave, we note that the rarefaction wave only is formed on the convex part of the flux $f(u)$. Consequently, when directly applying the $L^2$-energy method, the stability analysis of such a wave reduces to the case of a strictly convex. Although the stability of either a single non-convex shock wave or a single rarefaction wave can be addressed separately, it must be noted that the anti-derivative method for shocks and the $L^2$-energy method for rarefaction waves are not compatible with each other. This problem was recently resolved by Huang and Wang \cite{ref16} for $f(u)=u^3$. Their proof relies on a novel weighted $a$-contraction framework coupled with a time-dependent shift function.
		
		
		When Cattaneo’s law is introduced with $\tau>0$, the system \eqref{tuihua} becomes the system \eqref{yfc}, which necessitates the development of new analytical tools. As in \cite{ref3}, the large-time behavior of solutions to \eqref{yfc}-\eqref{IC} is governed by the solution of the corresponding Riemann problem
		\begin{align*}
			\begin{cases}
				u_t + f(u)_x = 0, \\
				u(0, x) = u_0(x) = \begin{cases}
					u_-, & x < 0, \\
					u_+, & x > 0.
				\end{cases}
			\end{cases}
		\end{align*}
		For strictly convex fluxes in this setting, Nakamura and Kawashima \cite{ref5} established the stability of viscous shock profiles using the anti-derivative method and weighted $L^2$-energy estimates, while Bai, He, and Zhao \cite{ref8} proved the nonlinear stability of rarefaction waves. For non-convex fluxe, Deng \cite{ref3} obtained stability results via the standard energy method and a shift theory under the Oleinik entropy condition.
		
		It is not evident a priori whether properties known for classical systems extend to their relaxed counterparts. A key distinction is illustrated by finite-time blow-up phenomena: for the relaxed system, results such as those by Hu and Wang \cite{ref23} and Hu, Racke, and Wang \cite{ref26} demonstrate that solutions can blow up in finite time under certain large initial data. In contrast, for the classical compressible Navier-Stokes system, global existence has been established for arbitrarily large initial data in \cite{ref28}.
		
		Based on the methodology established in \cite{ref16}, we investigate the time-asymptotic stability of a composite wave for the system \eqref{yfc} with a cubic flux $f(u)=u^3$. Clearly, the dissipation structure of the relaxed system \eqref{yfc} is weaker than that of the classical system \eqref{tuihua}, and it introduces substantial challenges in the energy estimates. In contrast to the results in \cite{ref16}, the entropy estimate fails to control the derivative of $(u-\tilde{u})$ due to the constraint imposed by Cattaneo's law. To obtain this missing estimate, higher-order energy estimates become necessary. However, performing the higher estimates requires the shock strength to be sufficiently small. This condition differs from the setting in \cite{ref16}. Moreover, compared with the estimates presented in \cite{ref16}, we now require $H^2$-estimates of the solutions to close the energy. 
		
		The remainder of the paper is structured as follows. Section \ref{sec2} introduces the viscous Oleinik shock profile and the rarefaction wave, and presents the main stability theorem. Section \ref{sec3} begins with the construction of an approximate rarefaction wave and the definition of the essential weight function and time-dependent shift. Using these constructs, the original stability problem is reformulated in terms of the shifted wave profiles. Finally, Section \ref{sec4} is devoted to establishing the necessary a priori estimates, divided into three parts: basic $L^2$ estimates, high-order energy estimates, and key dissipative estimates.\par
	\ \\
	\noindent{\textbf{Notations.}} For $s\in \mathbb{Z}$ and $p\in[1,\infty]$, $L^p(\mathbb{R})$ and $H^s(\mathbb{R})$ denote the standard Lebesgue space and s-th order Sobolev space in $\mathbb{R}$ with norm
	\begin{align*}
		\| f \|_{L^{p}(\mathbb{R})} := \left( \int_{\mathbb{R}} |f|^{p} dx \right)^{\frac{1}{p}},\ \ \|f\|_{H^s(\mathbb{R})}=\left(\sum_{i=1}^{s}\|\partial_{x}^if \|_{L^{p}(\mathbb{R})}^2\right)^{\frac{1}{2}}.
	\end{align*}
	$C^j((0,T);H^k(\mathbb{R}))$ denotes the space of $j$-order continuous differentiable functions on $(0,T)$ with values in $H^k(\mathbb{R})$, and $L^p((0,T);H^k(\mathbb{R}))$ denotes the space of $p$-th power integrable functions on $(0,T)$ with values in $H^k(\mathbb{R})$. For any function $g:\mathbb{R}\times\mathbb{R}^+\to\mathbb{R}$, we define $g^{\pm\mathbf{X}}(t,x)=g(t,x\pm\mathbf{X}(t))$.

	\section{Preliminaries and main result}\label{sec2}
	In this section, we first introduce the viscous Oleinik shock profile and the rarefaction wave solution. We note that two distinct cases arise in the analysis (corresponding to, whether $u_{-}<0<-\frac{u_{-}}{2}\leq u_{+}$ or $u_{+}\leq -\frac{u_{-}}{2}<0<u_{-}$). The present work focuses on the former case; the latter case can be handled by similar arguments and will not be discussed in detail here. Finally, we state the main theorem concerning the time-asymptotic stability of a composite wave of a degenerate viscous Oleinik shock and a rarefaction wave for system \eqref{yfc}.
	
	\subsection{Oleinik shock wave and rarefaction wave}
	In the subsequent discussion, we focus on the simple non-convex case of $f(u)=u^3$. Let $\xi=x-\sigma t$ with $\sigma=f'(u_{+})$ be the speed of shock wave. Plugging the form $(u^S,q^S)(\xi)$ into the system \eqref{yfc}, we have the following ODEs:
	\begin{equation}\label{tws} 
		\left\{ \begin{aligned}
			&-\sigma u_{\xi}^{S}+f(u^{S})_{\xi}=q_{\xi}^{S}, \\
			&-\tau\sigma q_{\xi}^{S}+q^{S}=u_{\xi}^{S} ,
		\end{aligned}\right.
	\end{equation}
	with $(u^{S},q^{S})(\xi)\to (u_{\pm},0),\ \ \xi\to \pm\infty.$
	We focus on the degenerate viscous shock profile, corresponding to the case $u_{+}=-\frac{u_{-}}{2}$ with the shock speed $\sigma=f'(u_{+})=3u_{+}^{2}.$  Substituting this into the traveling wave equation $\eqref{tws}$ and integrating over $(-\infty,\xi)$, we get
	\begin{equation}\label{1S}
		u_{\xi}^{S}=\frac{(u^{S}-u_{-})(u^{S}-u_{+})^{2}}{1+3\tau \sigma\left[(u^{S})^{2}-u_{+}^{2}\right]}.
	\end{equation}
	Hence, we present the following lemma.
	\begin{lemma}\label{lem0}
		For any states $u_{\pm}$ satisfying $u_{-}< 0$ and $u_{+}=-\frac{u_{-}}{2}>0$, and
		$\sigma>0$ satisfying R-H condition \eqref{RH} and Oleinik condition \eqref{Oleinik}, and $\tau<\frac{4}{9\sigma u_{-}^2}$, and let $\delta_{S} := |u_{+}-u_{-}|$ be the wave strength of the degenerate Oleinik shock. Then there exists a constant $C > 0$ independent of $\tau$ such that the following properties hold true:
		\begin{align*}
			u_{\xi}^{S}>0,\ \  \ \ \forall \xi\in \mathbb{R},
		\end{align*}
		and
		\begin{equation*}
			\begin{array}{ll}
				\left|u^S(\xi)-u_{-}\right| \leq C \delta_S e^{-C \delta_S^2|\xi|}, & \text {if } \xi<0, \\
				\left|u^S(\xi)-u_{+}\right| \leq \frac{C \delta_S}{1+C \delta_S^2|\xi|}, & \text {if } \xi>0, \\
				\left|u_{\xi}^S\right| \leq C \delta_S^3 e^{-C \delta_S^2|\xi|}, & \text {if } \xi<0, \\
				\left|u_{\xi}^S\right| \leq \frac{C \delta_S^3}{\left(1+C \delta_S^2|\xi|\right)^2}, & \text {if } \xi>0, \\
				\left|\partial_{\xi}^{k}u^S\right| \leq C \delta_S^2\left|\partial_{\xi}^{k-1}u^S\right|,\ k=2,3,4 &\forall \xi \in \mathbb{R}, \\
				\left|\partial_{\xi}^{k}q^{S}\right|\leq C\delta_{S}^2\left|u^{S}_{\xi}\right|,\ k=1,2,3,4 &\forall \xi \in \mathbb{R}.
			\end{array}
		\end{equation*}
	\end{lemma}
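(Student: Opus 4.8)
The plan is to treat \eqref{1S} as the autonomous scalar ODE $u^S_\xi=G(u^S)$, where
\[
G(u):=\frac{(u-u_-)(u-u_+)^2}{D(u)},\qquad D(u):=1+3\tau\sigma(u^2-u_+^2),
\]
and to read off every assertion of the lemma from the structure of $G$, keeping all constants uniform in $\tau$. The decisive first step is to control $D$ on the interval $I=[u_-,u_+]$ between the end states (recall $u_-<0<u_+=-u_-/2$ and $\delta_S=\tfrac{3}{2}|u_-|$): since $u^2-u_+^2\in[-\tfrac{1}{4}u_-^2,\tfrac{3}{4}u_-^2]$ on $I$, the hypothesis $\tau<\tfrac{4}{9\sigma u_-^2}$ — strictly stronger than what positivity of $D$ alone would require — pins $\tfrac{2}{3}<D<2$ on $I$, and together with $|u|\le|u_-|$ it gives $|D'(u)|=6\tau\sigma|u|\le C\delta_S^{-1}$, $|D''(u)|=6\tau\sigma\le C\delta_S^{-2}$, $D^{(j)}\equiv0$ for $j\ge3$, hence $|(1/D)^{(j)}|\le C\delta_S^{-j}$ on $I$ for $j=0,\dots,3$ with $C$ independent of $\tau$. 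Writing $P(u):=(u-u_-)(u-u_+)^2$, a cubic with $|P^{(j)}|\le C\delta_S^{3-j}$ on $I$, the Leibniz rule then yields the master bound $|G^{(j)}(u)|\le C\delta_S^{3-j}$ on $I$ for $j=0,\dots,3$, uniformly in $\tau$; in particular $|G|\le C\delta_S^3$ and $|G'|\le C\delta_S^2$.

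Next I would settle existence, monotonicity, and the endpoint rates. On the interior of $I$ one has $G>0$, so a solution through an interior point stays there, is strictly increasing, and obeys $u^S_\xi=G(u^S)>0$. The state $u_-$ is a hyperbolic rest point with $G'(u_-)=P'(u_-)/D(u_-)=\delta_S^2/D(u_-)\in(\tfrac{1}{2}\delta_S^2,\delta_S^2)$, while $u_+$ is a degenerate rest point since $P(u_+)=P'(u_+)=0$ and $G(u)=\tfrac{1}{2}G''(u_+)(u-u_+)^2+\cdots$ with $G''(u_+)=P''(u_+)=2\delta_S$; thus, after fixing the otherwise free translation by $u^S(0)=\tfrac{1}{2}(u_-+u_+)$, the solution is a monotone heteroclinic from $u_-$ to $u_+$. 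From the factorization $G(u)=\delta_S^2(u-u_-)\big(1-(u-u_-)/\delta_S\big)^2/D(u)$ one gets $G(u)\asymp\delta_S^2(u-u_-)$ for $u_-\le u\le u^S(0)$, so a Gronwall/differential-inequality argument gives $|u^S-u_-|\le C\delta_S e^{-C\delta_S^2|\xi|}$ for $\xi<0$; feeding this back into \eqref{1S} (with $(u^S-u_+)^2\le\delta_S^2$, $D\ge\tfrac{2}{3}$) gives $|u^S_\xi|\le C\delta_S^3 e^{-C\delta_S^2|\xi|}$. At the degenerate end, setting $w=u_+-u^S>0$, equation \eqref{1S} gives $-w_\xi=G(u^S)\asymp\delta_S w^2$ (since $u^S-u_-\asymp\delta_S$ and $D\asymp1$ there), so squeezing $w$ between solutions of $\dot y=-c\delta_S y^2$ produces the algebraic rate $w(\xi)\le\frac{C\delta_S}{1+C\delta_S^2\xi}$ for $\xi>0$, whence $u^S_\xi=-w_\xi\le\frac{C\delta_S^3}{(1+C\delta_S^2\xi)^2}$.

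The higher-derivative comparisons exploit the autonomy of the profile equation: differentiating $u^S_\xi=G(u^S)$ repeatedly, $\partial_\xi^k u^S$ ($k\ge2$) is a sum of terms of the form $G^{(a_1)}(u^S)\cdots G^{(a_m)}(u^S)(u^S_\xi)^{\ell}$, each carrying at least one factor $u^S_\xi$; the master bound and $|u^S_\xi|\le C\delta_S^3$ then force $|\partial_\xi^k u^S|\le C\delta_S^{2(k-1)}|u^S_\xi|$ (e.g.\ $\partial_\xi^2u^S=G'(u^S)u^S_\xi$, so $|\partial_\xi^2u^S|\le C\delta_S^2|u^S_\xi|$; $\partial_\xi^3u^S=[G''(u^S)u^S_\xi+(G'(u^S))^2]u^S_\xi$, so $|\partial_\xi^3u^S|\le C\delta_S^4|u^S_\xi|$), yielding the asserted comparison bounds for $k=2,3,4$ by induction on $k$. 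For $q^S$, integrating \eqref{tws}$_1$ over $(-\infty,\xi)$ gives the algebraic identity $q^S=f(u^S)-f(u_-)-\sigma(u^S-u_-)=(u^S-u_-)(u^S-u_+)^2=D(u^S)u^S_\xi$; hence $q^S_\xi=(f'(u^S)-\sigma)u^S_\xi=3\big((u^S)^2-u_+^2\big)u^S_\xi$ with $|(u^S)^2-u_+^2|=|u^S-u_+|\,|u^S+u_+|\le C\delta_S^2$, and differentiating $q^S=(u^S-u_-)(u^S-u_+)^2$ further and using the bounds on $\partial_\xi^\ell u^S$ just obtained gives $|\partial_\xi^k q^S|\le C\delta_S^2|u^S_\xi|$ for $k=1,2,3,4$.

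The main obstacle — and the reason for the precise form of the hypothesis on $\tau$ — is the interplay of $\tau$-uniformity with the degenerate state $u_+$: one cannot merely invoke positivity of $D$, but must pin it between fixed constants and control $D',D''$ by negative powers of $\delta_S$ so that the master estimate for $G^{(j)}$ carries $\tau$-free constants; and since the profile relaxes only algebraically as $\xi\to+\infty$, the decay there must come from a Riccati-type comparison rather than from linearization, and this slower rate has to be threaded consistently through the higher-derivative bookkeeping and the $q^S$ bounds.
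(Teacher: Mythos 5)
Your argument is correct and follows essentially the same route as the paper: pin $\frac{2}{3}<D<2$ on $[u_-,u_+]$ from $\tau<\frac{4}{9\sigma u_-^2}$, a Gronwall inequality for the exponential approach to the hyperbolic end state $u_-$, a Riccati-type comparison for the algebraic approach to the degenerate end state $u_+$, and repeated differentiation of the autonomous profile equation $u^S_\xi=G(u^S)$ for the higher-derivative and $q^S$ bounds; your master bound $|G^{(j)}(u)|\le C\delta_S^{3-j}$ on $[u_-,u_+]$ is simply a tidier packaging of the same term-by-term estimates the paper carries out explicitly. One discrepancy worth flagging: what you actually prove is $|\partial_\xi^k u^S|\le C\delta_S^{2(k-1)}|u^S_\xi|$ for $k=2,3,4$, not the literal chain $|\partial_\xi^k u^S|\le C\delta_S^2|\partial_\xi^{k-1}u^S|$ as printed, and for $k\ge 3$ the literal chain cannot hold pointwise: at the interior point where $G'(u^S)=0$ (the maximum of $G$ on $(u_-,u_+)$) one has $\partial_\xi^2u^S=0$ while $\partial_\xi^3u^S=G''(u^S)(u^S_\xi)^2\ne0$. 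Your form of the estimate is both true and the one actually used downstream, so you should state your conclusion as $|\partial_\xi^k u^S|\le C\delta_S^{2(k-1)}|u^S_\xi|$ rather than claim the printed chain ``by induction.'' A cosmetic slip: $G''(u_+)=P''(u_+)/D(u_+)$, not $P''(u_+)$; this changes nothing.
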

	\begin{proof}
		First, we notice that $\tau<\frac{4}{9\sigma u_{-}^2}$, then we get the sign of $u^S_{\xi}$ is determined by $(u^S-u_{-})$ and 
		\begin{align}\label{13}
			\frac{2}{3}<\left|1+3\tau \sigma\left[(u^{S})^{2}-u_{+}^{2}\right]\right|<2.
		\end{align}
		It is easily shown that $u^S-u_{-}>0$, i.e.,  $$u^S_{\xi}>0.$$
		Next, we assume that $\xi<0$, there exists a positive constant $C_{*}<1$ such that 
		\begin{equation}\label{s-}
			-\delta_{S}\leq u^S-u_{+}\leq u^S(0)-u_{+}\leq -C_{*}\delta_{S},\ \ 
			0< u^S-u_{-}\leq u^S(0)-u_{-}\leq \delta_{S}.
		\end{equation}
		By \eqref{1S}-\eqref{s-}, we obtain
		$$\left(u^S-u_{-}\right)_{\xi}\geq \frac{1}{2}(u^S-u_{-})(u^S-u_{+})^2\geq\frac{C_{*}}{2}\delta_{S}^2(u^S-u_{-}),$$
		using the Gronwall's inequality and \eqref{s-}, we get
		$$\left|u^S(\xi)-u_{-}\right| \leq C \delta_S e^{-C \delta_S^2|\xi|}.$$
		On the other hand, if $\xi>0$, there exists a positive constant $C_{*}<1$ such that 
		\begin{equation}\label{s+}
			C_{*}\delta_{S}\leq u^S(0)-u_{-}\leq u^S-u_{-}\leq\delta_{S}  ,\ \ 
			0< u_{+}-u^S\leq u_{+}-u^S(0)\leq \delta_{S}.
		\end{equation}
		By \eqref{1S}, \eqref{13} and \eqref{s+}, we have
		$$(u^S-u_{+})_{\xi}\geq \frac{1}{2}(u^S-u_{-})(u^S-u_{+})^2\geq\frac{C_{*}}{2} \delta_{S}(u^S-u_{+})^2,$$
		using the Gronwall's inequality and \eqref{s+}, we get 
		$$\left|u^S(\xi)-u_{+}\right| \leq \frac{C \delta_S}{1+C \delta_S^2|\xi|},$$
		where $C=\max\{\frac{C_{*}}{2},1\}.$ 
		
		Then, by \eqref{1S} and \eqref{13}, we get
		\begin{align*}
		\left|u^S_{\xi}\right|\leq C|u^S-u_{-}|\cdot|u^S-u_{+}|^2\leq C\delta_{S}^2|u^S-u_{-}|\leq C\delta_S^3 e^{-C \delta_S^2|\xi|},\ \ &\text{if}\  \xi<0,\\
		\left|u^S_{\xi}\right|\leq C|u^S-u_{-}|\cdot|u^S-u_{+}|^2\leq C\delta_{S}|u^S-u_{+}|^2\leq\frac{C \delta_S^3}{\left(1+C \delta_S^2|\xi|\right)^2},\ \ &\text{if}\ \xi>0.
		\end{align*}
		Using \eqref{1S}, we obtain
		\begin{align*}
		\left|u^S_{\xi\xi}\right|\leq&\left|\frac{\left[(u^S-u_{+})^2+2(u^S-u_{-})(u^S-u^{+})\right]\left(1+3\tau\sigma\left[(u^S)^2-u_{+}^2\right]\right)}{\left(1+3\tau\sigma\left[(u^S)^2-u_{+}^2\right]\right)^2}u^S_{\xi}\right|\\
		&+\left|\frac{6\tau\sigma u^S(u^S-u_{-})(u^S-u_{+})^2}{\left(1+3\tau\sigma\left[(u^S)^2-u_{+}^2\right]\right)^2}u^S_{\xi}\right|\\
		\leq&C\delta_{S}^2\left|u^S_{\xi}\right|.
		\end{align*}
		Similarly, for $k=3,4$, we can get $\left|\partial_{\xi}^{k}u^S\right| \leq C \delta_S^2\left|\partial_{\xi}^{k-1}u^S\right|.$\par
		Finally, by the equation \eqref{tws}$_1$ and $\sigma=3u_{+}^2$, we have $$q^S_{\xi}=-\sigma u^S_{\xi}+3(u^S)^2u^S_{\xi}=3(u^S-u_{+})(u^S+u_{+})u^S_{\xi},$$
		and $|u^S+u_{+}|\leq \max\{2u_{+},\left|u_{-}+u_{+}\right|\}=\frac{2\delta_{S}}{3}$.
		Then we get $\left|q^S_{\xi}\right|\leq C\delta_{S}^2\left|u^S_{\xi}\right|$.
		Similarly, for $k=2,3,4$, we get $\left|\partial_{\xi}^{k}q^{S}\right|\leq C\delta_{S}^2\left|u^{S}_{\xi}\right|$.
	\end{proof}
	We now turn to the rarefaction wave solutions of equation \eqref{yfc}. Our analysis begins by recalling the Riemann problem for the inviscid Burgers equation:
	\begin{equation}\label{IBE}
		\begin{cases}
			w_t^r+w^r w_x^r=0, \\
			w^r(0, x)=w_0^r(x)=\begin{cases}
				w_{-},\ x<0, \\
				w_{+},\ x>0.
			\end{cases}
		\end{cases}
	\end{equation}
	If $w_{-}<w_{+}$,then the Riemann problem  has a self-similar solution $w^{r}(t,x):= w^{r}(\frac{x}{t};w_{-},w_+)$ given by
	\begin{equation}\label{wr}
		w^r(t, x)=w^r\left(\frac{x}{t} ; w_{-}, w_{+}\right):=\begin{cases}
			w_{-}, \quad x \leq w_{-} t, \\
			\frac{x}{t}, \quad w_{-} t \leq x \leq w_{+} t, \\
			w_{+}, \quad x \geq w_{+} t.
		\end{cases}
	\end{equation}
	If $f''(u)>0$ on $u\in [u_-,u_+]$, then the self-similar rarefaction wave solution $u^{r}(t, x):=u^{r}(\frac{x}{t};u_-,u_+)$ can be given by
	\begin{equation}\label{RW}
		\begin{aligned}
			&u^r\left(\frac{x}{t} ; u_{-}, u_{+}\right)=(\lambda)^{-1}\left(w^r\left(\frac{x}{t} ; \lambda_{-}, \lambda_{+}\right)\right),
		\end{aligned}
	\end{equation}
	where $\lambda(u):=f^{\prime}(u) \text { and } \lambda_{ \pm}:=\lambda\left(u_{ \pm}\right)=f^{\prime}\left(u_{ \pm}\right)$.

	\subsection{Composite wave and main result}
	Let $u_{m}=-\frac{u_{-}}{2}$, then the Riemann problem $(\ref{RP})$ is solved by composite wave of a degenerate Oleinik shock connecting $u_-$ and $u_m$ and a rarefaction wave connecting $u_m$ and $u_+$. Thus, it is speculated that the time-asymptotic stability of the Cauchy problem $(\ref{yfc})$-$(\ref{IC})$ is determined by composite wave of a degenerate viscous Oleinik
	shock wave and a rarefaction wave. Then the composite wave is determined as follows:
	\begin{equation}\label{CW}
		\left(u^S\left(x-\sigma t+\mathbf{X}(t)\right)+u^r\left(\frac{x}{t} \right)-u_m, q^{S}(x-\sigma t+\mathbf{X}(t))+\left(u^{r}\left(\frac{x}{t}\right)\right)_{x}\right),
	\end{equation}
	where $u^S\left(x-\sigma t+\mathbf{X}(t)\right),\ q^S(x-\sigma t+\mathbf{X}(t))$ are defined by $(\ref{tws})$, $u^r\left(\frac{x}{t} \right)=u^r\left(\frac{x}{t} ;u_{m},u_{+}\right)$ is given in $(\ref{RW})$, and $\mathbf{X}(t)$ is defined later by \eqref{shift}.\par
	Now, we state our main result as follows.
	\begin{theorem}\label{theo1}
		Assume that the relaxation parameter $\tau$ satisfies
		\begin{align}\label{tau}
			\tau\leq\min\left\{\frac{1}{63\sigma u_{-}^2},\frac{8}{7785}\right\}.
		\end{align}
		For given constant states $u_{\pm}$ satisfy $u_{-}<0<-\frac{u_{-}}{2}<u_{+}$, there  exist positive constants $\delta_{0}, \varepsilon_{0}$ such that if $\delta_{R}=\delta_{S}^2$ and the initial data $(u_0,q_0)$ satisfies
		\begin{equation}\label{C1}
			\begin{aligned}
				& \left\|u_0(\cdot)-u^S\left(\cdot ; u_{-}, u_m\right)\right\|_{L^2\left(\mathbb{R}_{-}\right)}+\left\|u_0(\cdot)-\left(u^S\left(\cdot ; u_{-}, u_m\right)+u_{+}-u_m\right)\right\|_{L^2\left(\mathbb{R}_{+}\right)} \\
				& \quad+\left\|u_{0 x}(\cdot)-u_x^S\left(\cdot ; u_{-}, u_m\right)\right\|_{H^1(\mathbb{R})}+\sqrt{\tau}\left\|q_{0}(\cdot)\right\|_{H^2(\mathbb{R})}<\varepsilon_0,
			\end{aligned}
		\end{equation}
		where $\mathbb{R}_{+}:=-\mathbb{R}_{-}=(0,+\infty)$, and the shock wave strength satisfies $\delta_S\leq\delta_{0}$, then the initial value problem $(\ref{yfc})$-$(\ref{IC})$ has a unique global-in-time classical solution $(u(t,x),q(t,x))$. Moreover, there exist an absolutely continuous shift function $\mathbf{X}(t)$ such that
		\begin{equation}
			\begin{aligned}
				&u(t, x)-\left(u^S\left(x-\sigma t+\mathbf{X}(t)\right)+u^r\left(\frac{x}{t}\right)-u_m\right) \in C\left(\left(0,+\infty\right) ; H^2(\mathbb{R})\right),\\
				&q(t,x)-\left(q^{S}(x-\sigma t+\mathbf{X}(t))+\left(u^{r}\left(\frac{x}{t}\right)\right)_{x}\right) \in C\left(\left(0,+\infty\right); H^2(\mathbb{R})\right).
			\end{aligned}
		\end{equation}
		In addition,
		\begin{equation}\label{time}
			\begin{aligned}
				\sup _{x \in \mathbb{R}}\bigg|u(t, x)-\left(u^{S}\left(x-\sigma t+\mathbf{X}(t)\right)+u^{r}\left(\frac{x}{t}\right)-u_{m}\right)\bigg|&\to0,\ &t \rightarrow+\infty,\\
				 \sup _{x \in \mathbb{R}}\bigg|q(t,x)-q^{S}(x-\sigma t+\mathbf{X}(t))\bigg|&\to0,\ &t \rightarrow+\infty,
			\end{aligned} 
		\end{equation}
		and
		\begin{align}\label{X(t)}
			\lim _{t \rightarrow+\infty}\left|\dot{\mathbf{X}}(t)\right|=0.
		\end{align}
	\end{theorem}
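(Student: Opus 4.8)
The plan is to follow the weighted $a$-contraction framework of \cite{ref16}, adapted to accommodate the weaker dissipation induced by Cattaneo's law. First I would fix the approximate rarefaction wave $u^R(t,x)$ (smoothing the self-similar profile $u^r$ near $t=0$ and using the standard decay estimates for $u^R_x$, $u^R_{xx}$), choose the weight function $a(x)$ to be a bounded, monotone perturbation of $1$ concentrated across the shock layer (of size comparable to $\delta_S$), and define the shift $\mathbf{X}(t)$ as the solution of the ODE in \eqref{shift} whose right-hand side is designed so that the bad ``cross'' term $\dot{\mathbf{X}}(t)\int a\,(u-\tilde u)\,\partial_\xi u^S\,dx$ cancels against the leading interaction and flux terms. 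With these objects in place, the perturbation $(\phi,\psi)=(u-\tilde u,\,q-\tilde q)$ (where $(\tilde u,\tilde q)$ is the shifted composite wave \eqref{CW}) satisfies a system to which I would apply the relative-entropy/$a$-contraction estimate, yielding control of $\int a\,\eta(u|\tilde u)\,dx$ plus a good diffusive term $\int a\,|\phi|^2\,\partial_\xi u^S\,dx$ (the Oleinik-shock dissipation) and the rarefaction dissipation $\int (u^R)_x\,|\phi|^2\,dx$, at the cost of controllable error terms involving wave interaction, the smoothing of the rarefaction, and—crucially—terms that are not absorbed because the entropy dissipation does not see $\phi_x$.

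The heart of the argument, and the main obstacle, is precisely that missing derivative control. Under Fourier's law the parabolic term $\mu u_{xx}$ supplies $\int \phi_x^2\,dx$ directly in the basic estimate; under Cattaneo's law \eqref{yfc}$_2$ the flux $q$ relaxes rather than being slaved to $u_x$, so the zeroth-order relative-entropy estimate controls only $\phi$ and $\psi$ in $L^2$, not $\phi_x$. To recover it I would run higher-order energy estimates: differentiate the perturbation system in $x$, and perform weighted $L^2$ estimates on $(\phi_x,\psi_x)$ and then $(\phi_{xx},\psi_{xx})$. The relaxation structure gives, after multiplying \eqref{yfc}$_2$-type equations by $\psi$, $\psi_x$, $\psi_{xx}$ and integrating, dissipation of the form $\int \psi^2$, $\int\psi_x^2$, $\int\psi_{xx}^2$ with coefficient $\sim 1$ (plus $\tau$-weighted time derivatives $\tfrac{d}{dt}\tfrac{\tau}{2}\int\psi^2$ etc.), while the equation \eqref{yfc}$_1$ lets one trade $\psi_x$ for $\phi_{xx}$ modulo lower-order terms; combining these yields $\int(\phi_x^2+\phi_{xx}^2)\,dx$ in the dissipation. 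This is the step where the smallness of $\delta_S$ is forced: the higher-order estimates produce error terms of the form $\delta_S^{k}\int |\partial_x^j u^S|\,|\partial_x^\ell\phi|^2$ and interaction terms between the shock and rarefaction, and only when $\delta_S\le\delta_0$ (together with $\delta_R=\delta_S^2$ and the explicit smallness of $\tau$ in \eqref{tau}) can the bad constants be beaten; this is the point of departure from \cite{ref16}, where no shock-strength smallness was needed.

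Having assembled a Lyapunov-type inequality of the form
\begin{equation*}
\frac{d}{dt}\mathcal{E}(t)+\mathcal{D}(t)\le C\,\dot{\mathbf X}(t)^2+C\,G(t),
\end{equation*}
where $\mathcal{E}\sim \|\phi\|_{H^2}^2+\tau\|\psi\|_{H^2}^2$ (weighted), $\mathcal{D}$ contains the shock and rarefaction diffusion plus the full relaxation dissipation $\|\psi\|_{H^2}^2+\|\phi_x\|_{H^1}^2$, and $G(t)$ collects the time-integrable interaction/smoothing errors (decaying like $(1+t)^{-1+\kappa}$ or exponentially), I would estimate $\int_0^\infty \dot{\mathbf X}(t)^2\,dt<\infty$ directly from the defining ODE for $\mathbf X$ using the a priori bounds, integrate in time to get the uniform bound $\sup_t \mathcal{E}(t)+\int_0^\infty\mathcal{D}(t)\,dt\le C\varepsilon_0^2$, and close the continuation argument for global existence in the usual way (local existence for the symmetrizable hyperbolic system \eqref{yfc} plus the a priori estimate). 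Finally, the decay statements \eqref{time} follow from $\int_0^\infty\mathcal{D}<\infty$ together with $\tfrac{d}{dt}\int\phi_x^2\,dx\in L^1$ (hence $\|\phi_x\|_{L^2}\to0$, giving $\|\phi\|_{L^\infty}\to0$ by Sobolev), and similarly $\|\psi\|_{L^\infty}\to0$; the convergence $|\dot{\mathbf X}(t)|\to0$ in \eqref{X(t)} comes from feeding these decay facts back into the ODE for $\mathbf X$. I expect the delicate bookkeeping to be (i) arranging the shift ODE so the leading cross terms cancel and $\dot{\mathbf X}\in L^2$, and (ii) the weighted higher-order estimates where the Cattaneo relaxation must be shown to generate $\phi_{xx}$-dissipation with a constant large enough to dominate the shock-profile error terms under \eqref{tau} and $\delta_S\le\delta_0$.
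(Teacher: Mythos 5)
Your proposal takes essentially the same route as the paper: weighted relative entropy ($a$-contraction) with a time-dependent shift for the degenerate Oleinik shock, an approximate rarefaction wave, and—the key departure from Fourier's law—higher-order weighted estimates to recover control of $\phi_\xi$ since the zeroth-order entropy inequality no longer dissipates it. You also correctly flag that this is where the smallness of $\delta_S$ (absent in Huang--Wang) becomes unavoidable, and your outline of global continuation plus decay-via-$L^1$-in-time of $\tfrac{d}{dt}\|\phi_\xi\|_{L^2}^2$ is exactly the paper's closing argument.

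Two places where your description, taken literally, would mislead you in execution. First, the weight here is not a monotone ``$1+O(\delta_S)$'' perturbation: the degenerate shock forces the specific piecewise-polynomial weight $w(u^S)$ of \eqref{WF}, of magnitude $O(u_m^2)$ and varying by a bounded factor, precisely tuned so that $\sigma w'-3(u^S)^2w'+3u^S w-\tfrac{w''}{2}u^S_\xi$ has the right sign in Lemma~\ref{lemgs}; the shift ODE \eqref{shift} is then chosen so that $\dot{\mathbf X}(t)\mathbf Y(t)$ becomes a \emph{positive} multiple of $|\dot{\mathbf X}|^2$, not so that the cross term cancels against interaction terms. Second, the mechanism for producing $\int\phi_\xi^2$ dissipation is not ``trade $\psi_x$ for $\phi_{xx}$ via \eqref{yfc}$_1$'' (that equation gives $r_\xi\sim\phi_t+\cdots$, not $\phi_{\xi\xi}$); instead, one multiplies Cattaneo's law (the $r$-equation in \eqref{HPFC}$_2$) directly by $-w\,\partial_\xi^{k+1}\phi^{-\mathbf X}$, using the $\phi_\xi$ term already present there to generate $\int w(\partial_\xi^{k+1}\phi)^2$ (Lemma~\ref{lem9}). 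Moreover, unlike the Fourier case, the $L^2$ entropy estimate actually produces a \emph{bad} $+2k_2\int\phi_\xi^2 w$ contribution (from $\mathbf G^S$) that must be absorbed by the dissipative estimate, so the constants $k_1,k_2,\tau,\varepsilon_*$ in \eqref{k1k2}, \eqref{nk1k2} have to be chosen compatibly. With these corrections your plan matches the paper.
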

	\begin{remark}
		It is straightforward  to show that Theorem $\ref{theo1}$ in fact holds under the condition $\delta_{R}=\delta_{S}^{1+\varepsilon}$, with $\varepsilon>0$.  We note that, for the classical conservation laws \eqref{tuihua}-\eqref{thic}, the strength of the rarefaction wave $\delta_R$ was also  required to be of lower order than $\delta_S$ in an implicit manner.
	\end{remark}
	\begin{remark}
		Based on $(\ref{X(t)})$, the shift function $\mathbf{X}(t)$ satisfies
		\begin{align*}
			\lim _{t \rightarrow+\infty}\left|\frac{\mathbf{X}(t)}{t}\right|=0.
		\end{align*}
		This implies that the shifted viscous Oleinik shock preserves the original Oleinik shock profile time-asymptotically.
	\end{remark}

	\section{Reformulation of the problem}\label{sec3}
	In this section, we first construct an approximate rarefaction wave. Secondly, we reformulate the stability problem by introducing a time-dependent shift, $\mathbf{X}(t)$, and a weight function $w(u^S(\xi))$. This centers the analysis on the shifted viscous Oleinik shock and the shifted approximate rarefaction wave. Thirdly, we recall the local existence theorem and present a priori estimates. Finally, we prove Theorem \ref{theo1} by combining the local existence of solutions with uniform-in-time a priori estimates.
	
	\subsection{Construction of approximate smooth rarefaction wave}
	We construct smooth approximate solution of the rarefaction wave by the smooth solutions of the following Cauchy problem of Burgers equation as in \cite{ref24}:
	\begin{equation}\label{ABE}                 \left\{\begin{array}{l}w_{t}^{R}+w^{R} w_{x}^{R}=0, \\ w^{R}(0, x)=w_{0}^{R}(x)=\frac{w_{+}+w_{-}}{2}+\frac{w_{+}-w_{-}}{2} \tanh x \rightarrow w_{ \pm},\ \text {as } x \rightarrow \pm \infty .\end{array}\right. 
	\end{equation}
	Since $w_{-}<w_{+}$ and $\left(w_{0}^{R}\right)'(x)>0$, the Cauchy problem $(\ref{ABE})$ admits a unique smooth solution $w^{R}(t,x)=w^{R}(t,x;w_{-},w_{+})$ defined by
	\begin{equation}
		\left\{\begin{array}{l}w^{R}(t,x)=w_{0}^{R}\left(x_{0}(t,x)\right)=\frac{w_{+}+w_{-}}{2}+\frac{w_{+}-w_{-}}{2} \tanh \left(x_{0}(t, x)\right), \\ x=x_{0}(t,x)+w_{0}^{R}\left(x_{0}(t, x)\right) t,\end{array}\right.
	\end{equation}
	where $x_{0}(t,x)$ is the unique intersection point with $x$-axis of the straight characteristic line of Burgers equation. Correspondingly, we construct the smooth approximate rarefaction wave $u^{R}(t,x):=u^{R}(t,x;u_{-},u_{+})$ of $u^{r}(t,x):=u^{r}(\frac{x}{t};u_{-},u_{+})$ by
	\begin{align*}
		u^{R}(t,x;u_{-},u_{+}):=(\lambda)^{-1}\left(w^{R}(t,x;\lambda_{-},\lambda_{+})\right),
	\end{align*}
	where $\lambda(u):=f'(u),\ \lambda_{\pm}=\lambda(u_{\pm})=f'(u_{\pm})$. It is easy to check that $u^{R}(t,x)$ satisfies the equation
	\begin{equation}  \left\{\begin{array}{l}u_{t}^{R}+f\left(u^{R}\right)_{x}=0, \\ u^{R}(0, x)=(\lambda)^{-1}\left(\frac{\lambda_{+}+\lambda_{-}}{2}+\frac{\lambda_{+}-\lambda_{-}}{2} \tanh x\right) \rightarrow u_{ \pm}, \ \text {as }\  x \rightarrow \pm \infty .\end{array}\right. 
	\end{equation}
	We have the following lemma to estimate $u^{R}$, see \cite{ref16,ref24}.
	\begin{lemma}\label{lem1}
		Assume \( u_{-} < u_{+} \)and \( f^{\prime \prime}(u) > 0 \) on \( u \in\left[u_{-}, u_{+}\right] \)and let \( \delta_{R}:=\left|u_{+}-u_{-}\right| \) be the rarefaction wave strength. Then it holds that\\
		$(1)$ \( u_{-} < u^{R}(t, x) < u_{+}, \ u_{x}^{R}(t, x) > 0 \), for \( t \geq 0 \) and \( x \in \mathbb{R} \);\\
		$(2)$ For all \( t \geq 1 \) and \( p \in[1,+\infty] \), there exists a positive constant \( C_{p} \) such that
		\begin{align*}
			\begin{array}{l}
				\left\|u_{x}^{R}(t)\right\|_{L^{p}(\mathbb{R})} \leq C_{p} \min \left\{\delta_{R}, \delta_{R}^{\frac{1}{p}} t^{-1+\frac{1}{p}}\right\}, \\
				\left\|\partial_{x}^{k}u^{R}(t)\right\|_{L^{p}(\mathbb{R})} \leq C_{p} \min \left\{\delta_{R}, t^{-1}\right\},\ \ k=2,3,4;
			\end{array}
		\end{align*}
		$(3)$ 
		\begin{align*}
			\lim_{t \to \infty} \sup_{x \in \mathbb{R}}\left|u^{R}(t, x)-u^{r}\left(\frac{x}{t}\right)\right|=0,
		\end{align*}
		that is, the approximate rarefaction wave \( u^{R}(t, x) \) and the inviscid self-similar rarefaction wave fan \( u^{r}\left(\frac{x}{t}\right) \) are equivalent to every time-asymptotically for \( x \in \mathbb{R} \) uniformly;\\
		$(4)$ For all \( t \geq 0 \), it holds that
		\begin{align*}
			\begin{array}{ll}
				\left|u^{R}(t, x)-u_{+}\right| \leq C \delta_{R} e^{-2\left|x-\lambda_{+} t\right|}, & x \geq \lambda_{+} t, \\
				\left|u^{R}(t, x)-u_{-}\right| \leq C \delta_{R} e^{-2\left|x-\lambda_{-} t\right|}, & x \leq \lambda_{-} t.
			\end{array}
		\end{align*}
		$(5)$ For all \( t \geq 1 \) and any \( \varepsilon \in(0,1) \), there exists a positive constant \( C_{\varepsilon} \) such that
		\begin{align*}
			\begin{array}{l}
				\left|u^{R}(t, x)-u_{+}\right| \leq C_{\varepsilon} \delta_{R}^{\frac{2 \varepsilon}{2+\varepsilon}} t^{-1+\varepsilon} e^{-\varepsilon\left|x-\lambda_{+} t\right|}, \quad x \geq \lambda_{+} t, \\
				\left|u^{R}(t, x)-u_{-}\right| \leq C_{\varepsilon} \delta_{R}^{\frac{2 \varepsilon}{2+\varepsilon}} t^{-1+\varepsilon} e^{-\varepsilon\left|x-\lambda_{-} t\right|}, \quad x \leq \lambda_{-} t.
			\end{array}
		\end{align*}
		$(6)$ For all \( t \geq 1 \) and any \( \varepsilon \in(0,1) \), there exists a positive constant \( C_{\varepsilon} \) such that
		\begin{align*}
			\left|u^{R}(t, x)-u^{r}\left(\frac{x}{t}\right)\right| \leq C_{\varepsilon} \delta_{R}^{\varepsilon} t^{-1+\varepsilon}, \quad \lambda_{-} t \leq x \leq \lambda_{+} t .
		\end{align*}
	\end{lemma}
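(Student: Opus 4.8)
The plan is to reduce all six assertions to explicit estimates for the Burgers solution $w^R$ and then transfer them to $u^R = \lambda^{-1}(w^R)$, using that $\lambda^{-1}$ is smooth and increasing with $(\lambda^{-1})' = 1/f'' > 0$ and with $\lambda^{-1}$ together with its first few derivatives bounded on the compact range $[\lambda_-,\lambda_+]$; this is the approach of \cite{ref24,ref16}, which I would follow in detail.

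First I would record the characteristic representation. Differentiating $x = x_0 + w_0^R(x_0)\,t$ with respect to $x$ gives $\partial_x x_0 = \bigl(1 + (w_0^R)'(x_0)\,t\bigr)^{-1}$, which is positive and finite for every $t \ge 0$ because $(w_0^R)'(x) = \tfrac{w_+-w_-}{2}\,\mathrm{sech}^2 x > 0$. Hence $w_x^R = (w_0^R)'(x_0)\,\partial_x x_0 > 0$, and since $w_- < w_0^R < w_+$ we obtain $w_- < w^R < w_+$; the chain rule then yields part $(1)$. For part $(2)$ I would start from $w_x^R = (w_0^R)'(x_0)\big/\bigl(1 + (w_0^R)'(x_0)\,t\bigr)$ together with $(w_0^R)'(x) \le C\delta_R$ and $(w_0^R)'(x) \le C\delta_R e^{-2|x|}$: the elementary inequality $a/(1+at) \le \min\{a,1/t\}$ gives $\|w_x^R(t)\|_{L^\infty} \le C\min\{\delta_R, t^{-1}\}$, monotonicity gives $\|w_x^R(t)\|_{L^1} = w_+ - w_-$, and the intermediate $L^p$ rate follows by interpolation after the substitution $x \mapsto x_0$, whose Jacobian is $1 + (w_0^R)'(x_0)t$. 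Differentiating the characteristic identities once more produces the analogous bounds for $\partial_x^k w^R$ with $k=2,3,4$, and passing to $u^R$ costs only multiplicative constants.

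Parts $(4)$–$(6)$ localize the above. If $x \ge \lambda_+ t$ then the characteristic foot obeys $x_0 \ge x - \lambda_+ t \ge 0$ (since $w_0^R \le \lambda_+$), so $|w^R - w_+| = |w_0^R(x_0) - w_+| \le C\delta_R e^{-2|x_0|} \le C\delta_R e^{-2|x - \lambda_+ t|}$, and symmetrically for $x \le \lambda_- t$; dividing by a lower bound of $f''$ gives part $(4)$. The refined rates $(5)$–$(6)$ follow by splitting $e^{-2|x-\lambda_\pm t|} = e^{-(2-\varepsilon)|x-\lambda_\pm t|}\,e^{-\varepsilon|x-\lambda_\pm t|}$ and combining the decay $t^{-1}$ from $(2)$ with the leftover exponential weight. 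For part $(3)$ I would prove $w^R(t,x) \to w^r(x/t)$ uniformly: outside the fan this is exponential by $(4)$, while for $\lambda_- t \le x \le \lambda_+ t$ one has $w^R = w_0^R(x_0)$ and $w^r(x/t) = x/t = x_0/t + w_0^R(x_0)$, so the two differ by $|x_0|/t = O(t^{-1})$ uniformly since $x_0$ stays in a bounded set on this range; composing with the uniformly continuous map $\lambda^{-1}$ concludes.

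The main obstacle is the sharp, uniform-in-time control of the weighted $L^p$ norms in part $(2)$—in particular producing the rate $\min\{\delta_R,\delta_R^{1/p}t^{-1+1/p}\}$ for $u_x^R$ rather than the cruder $\min\{\delta_R,t^{-1}\}$; all remaining assertions are then direct consequences of the explicit characteristic formula for $w^R$.
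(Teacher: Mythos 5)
The paper provides no proof of this lemma, citing \cite{ref16,ref24} instead; your plan --- analyze the Burgers solution $w^R$ through the implicit relation $x=x_0+w_0^R(x_0)t$ and then transfer the resulting bounds to $u^R$ via the smooth increasing map $\lambda^{-1}$ --- is exactly the method of those references, and your treatment of parts (1), (2), (4) is sketched correctly (the $L^1$--$L^\infty$ interpolation, equivalently the change of variables $x\mapsto x_0$ with Jacobian $1+(w_0^R)'(x_0)t$, does give the stated $L^p$ rates).

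Two points nevertheless need repair. In part (3) you claim that $x_0$ stays in a bounded set on the fan $\lambda_- t\le x\le\lambda_+ t$; this is false. At the edge $x=\lambda_+ t$ one has $x_0=(\lambda_+-w_0^R(x_0))\,t$ with $\lambda_+-w_0^R(x_0)\sim\delta_R e^{-2x_0}$, so $x_0 e^{2x_0}\sim\delta_R t$ and hence $x_0\sim\tfrac12\ln(\delta_R t)$ as $t\to\infty$. The discrepancy $|x_0|/t$ is therefore $O(\ln t/t)$, not $O(t^{-1})$; part (3) survives because this still vanishes, but the bounded-$x_0$ claim is wrong, and this logarithm is precisely what must be quantified to get (5)--(6). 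Your sketch for those parts --- ``split $e^{-2|x-\lambda_\pm t|}$ and combine with the $t^{-1}$ decay from (2)'' --- does not close as stated: the $L^\infty$ bound on $u_x^R$ controls a derivative rather than the tail $|u^R-u_\pm|$ itself, and it carries no residual power of $\delta_R$ once $t^{-1}\le\delta_R$. The ingredient that actually supplies the $\delta_R$-power is the implicit inequality $x_0\ge c\,\delta_R e^{-2x_0}\,t$, which (via Lambert's $W$) yields $\delta_R e^{-2x_0}\le W(\delta_R t)/t\le C_\eta\,\delta_R^\eta\,t^{-1+\eta}$ for every $\eta\in(0,1)$; one then interpolates this against the crude exponential tail $C\delta_R e^{-2|x-\lambda_+ t|}$ from part (4). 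Without that step the sketch does not produce the stated exponents.
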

	
	\subsection{Reformulation of the problem}
	In order to simplify our analysis, we rewrite the system $(\ref{yfc})$ through the coordinates transformation $(t,x)\to (t,\xi=x-\sigma t)$:
	\begin{equation} \label{TFC}
		\begin{cases}
			u_t - \sigma u_{\xi} + f(u)_{\xi} = q_{\xi}, \\
			\tau q_t - \tau\sigma q_{\xi} + q = u_{\xi}.
		\end{cases} 
	\end{equation}\par
	Next, we analyze the composite wave $$\left(u^S(x-\sigma t+\mathbf{X}(t))+u^r(\frac{x}{t})-u_{m}, q^S(x-\sigma t+\mathbf{X}(t))+\left(u^r(\frac{x}{t})\right)_{x}\right).$$ Since the rarefaction wave $u^r(\frac{x}{t};u_m,u_+)$ is only Lipschitz continuous. We replace $u^r(\frac{x}{t};u_m,u_+)$ with a shifted approximate rarefaction wave  $u^R(1+t,x+\mathbf{X}(t);u_m,u_+)$ to address this problem for the subsequent stability analysis. Here the shift function $\mathbf{X}(t)$ remains to be determined. Thus, the stability ansatz is defined by
	 \begin{align}
	 	&\tilde{u}(t,\xi+\mathbf{X}(t)):=u^S\left(\xi+\mathbf{X}(t)\right)+u^R\left(1+t,\xi+\sigma t+\mathbf{X}(t) \right)-u_m, \\
	 	&\tilde{q}(t,\xi+\mathbf{X}(t)):=q^{S}(\xi+\mathbf{X}(t))+\left(u^R\left(1+t,\xi+\sigma t+\mathbf{X}(t) \right)\right)_{\xi}.
	 \end{align}
	For simplification, we denote that
	\begin{align*}
		\tilde{u}^{\mathbf{X}}(t,x):=\tilde{u}(t,x+\mathbf{X}(t)),&\ \tilde{q}^{\mathbf{X}}(t,x):=\tilde{q}(t,x+\mathbf{X}(t)),\\
		\left(u^{S}\right)^{\mathbf{X}}(t,\xi):=u^{S}(t,\xi+\mathbf{X}(t)),&\ \left(u^{R}\right)^{\mathbf{X}}(t,\xi):=u^{R}(t,\xi+\mathbf{X}(t)).
	\end{align*}\par
	Meanwhile, the ansatz $\left(\tilde{u}^{\mathbf{X}},\tilde{q}^{\mathbf{X}}\right)$ satisfies the following system:
	\begin{equation} \label{XFC}
		\begin{cases}
			\tilde{u}_t^{\mathbf{X}} - \sigma \tilde{u}_{\xi}^{\mathbf{X}} + f\left(\tilde{u}^{\mathbf{X}}\right)_{\xi} - \tilde{q}_{\xi}^{\mathbf{X}} = \dot{X}(t)\left(\left(u^S\right)_{\xi}^{\mathbf{X}}+ \left(u^R\right)_{\xi}^{\mathbf{X}}\right) + F^{\mathbf{X}}, \\
			\tau\tilde{q}_t^{\mathbf{X}} - \tau \sigma \tilde{q}_{\xi}^{\mathbf{X}} + \tilde{q}^{\mathbf{X}} - \tilde{u}_{\xi}^{\mathbf{X}} = \tau \dot{X}(t) \tilde{q}_{\xi}^{\mathbf{X}} + \tau \left(u^R\right)_{t\xi}^{\mathbf{X}},
		\end{cases}
	\end{equation}
	where the error term $F^{\mathbf{X}}(t,\xi)=F(t,\xi+\mathbf{X}(t))$ is given by
	\begin{equation}
		F^{\mathbf{X}}= \left(f\left(\tilde{u}^{\mathbf{X}}\right) - f\left(\left(u^S\right)^{\mathbf{X}}\right) - f\left(\left(u^R\right)^{\mathbf{X}}\right)\right)_{\xi}- \left(u^R\right)_{\xi\xi}^{\mathbf{X}}. 
	\end{equation}
	We define the perturbation
	\begin{equation}\label{P} 
		\phi(t, \xi) := u(t, \xi) - \tilde{u}^{\mathbf{X}}(t, \xi),\ \ r(t,\xi):=q(t,\xi)-\tilde{q}^{\mathbf{X}}(t,\xi). 
	\end{equation} 
	The perturbation $(\phi, r)$ satisfies the following equation, derived from systems (\ref{TFC}) and (\ref{XFC}):
	\begin{equation}\label{PFC}
		\left\{\begin{array}{l}\phi_{t}-\sigma \phi_{\xi}+\left(f\left(\phi+\tilde{u}^{\mathbf{X}}\right)-f\left(\tilde{u}^{\mathbf{X}}\right)\right)_{\xi}+\dot{\mathbf{X}}(t)\left(\left(u^{S}\right)_{\xi}^{\mathbf{X}}+\left(u^{R}\right)_{\xi}^{\mathbf{X}}\right)-r_{\xi}=-F^{\mathbf{X}}, \\ 
			\tau r_{t}-\tau\sigma r_{\xi}+r-\phi_{\xi}+\tau \dot{\mathbf{X}}(t) \tilde{q}_{\xi}^{\mathbf{X}}+\tau \left(u^{R}\right)_{t\xi}^{\mathbf{X}}=0,\end{array}\right. 
	\end{equation}
	with initial condition
	\begin{align}\label{initial}
		\left(\phi(0,\xi),r(0,\xi)\right)=\left(\phi_{0}(\xi),r_0(\xi)\right)=\left(u_0(\xi)-\tilde{u}(0,\xi),q_0(\xi)-\tilde{q}(0,\xi)\right).
	\end{align}
	
	\subsection{Construction of weight function and shift function}
	There exist unique values $\xi_{1}$ and $\xi_{2}$ such that $u^S(\xi_1)=0$ and $u^S(\xi_2)= \frac{u_m}{2}$, respectively. We introduce the weight function $w(\xi)=w(u^{S}(\xi))$ in \cite{ref16}:
	\begin{equation} \label{WF}
		w(\xi)=w(u^{S}(\xi)) := \begin{cases}
			\frac{5}{2}u_{m}(u_{m} - u^{S}(\xi)), & \xi \in (-\infty, \xi_{1}) \iff u^{S} \in (u_{-}, 0), \\
			\frac{5}{2u_{m}^{2}}(u_{m} - u^{S})[4(u^{S})^{3} + u_{m}^{3}], & \xi \in [\xi_{1}, \xi_{2}) \iff u^{S} \in [0, \frac{u_{m}}{2}), \\
			\frac{15}{8}u_{m}^{2}, & \xi \in [\xi_{2}, +\infty) \iff u^{S} \in [\frac{u_{m}}{2}, u_{m}).
		\end{cases} 
	\end{equation}
	Notice that
	\begin{equation} \label{WFP}
		\begin{aligned}
			\frac{15}{8}u_m^2 \leq w < \frac{15}{2}u_m^2,\ \ 
			-\frac{5}{2}u_m \leq w' \leq 0,\ \ 
			0 \leq w'' \leq \frac{15}{2}, 
		\end{aligned}
	\end{equation}
	where $w':=\frac{dw(u^{S})}{du^S}.$ It is clear that $w\in \mathbf{C}^{2}(\mathbb{R}).$\par
	The ensuing analysis relies critically on the shift function $\mathbf{X}(t)$, which is defined as the solution to an ordinary differential equation, following the approach in \cite{ref16}:
	\begin{equation} \label{shift}
		\begin{cases}
			\dot{\mathbf{X}}(t) = \frac{32}{25u_m^2}\int_{\mathbb{R}}\phi(t, \xi)w^{\mathbf{X}}(u^S(\xi))(u^S)_{\xi}^{\mathbf{X}}(\xi)d\xi, \\
			\mathbf{X}(0) = 0,
		\end{cases} 
	\end{equation}
	where $w^{\mathbf{X}}(u^{S}(\xi)):=w\left(\left(u^{S}\right)^{\mathbf{X}}(\xi)\right).$
	We now state the following lemma, as shown in \cite{ref3,ref16}.
	\begin{lemma}\label{le0}
		For any $T,C_0>0$, there exists a positive constant $C$ such that for $\forall u\in L^{\infty}((0,T);\mathbb{R})$ with $|u(t,x)|\leq C_0,\ (t,x)\in [0,T]\times\mathbb{R}$, then the ODE $(\ref{shift})$ has a unique absolutely continuous solution on $[0,T]$. Moreover,
		\begin{align*}
			|\dot{\mathbf{X}}(t)|\leq Ct,\ \forall t\in[0,T].
		\end{align*}
	\end{lemma}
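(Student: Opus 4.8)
The plan is to read \eqref{shift} as the scalar Cauchy problem $\dot{\mathbf X}(t)=G(t,\mathbf X(t))$, $\mathbf X(0)=0$, for the functional
\[
G(t,y):=\frac{32}{25u_{m}^{2}}\int_{\mathbb R}\bigl(u(t,\xi)-\tilde u(t,\xi+y)\bigr)\,w\bigl(u^{S}(\xi+y)\bigr)\,u^{S}_{\xi}(\xi+y)\,d\xi,
\]
and to verify that $G$ meets Carath\'eodory's conditions (measurability in $t$, continuity in $y$) together with a Lipschitz bound in $y$ uniform on $[0,T]\times\mathbb R$, after which the Carath\'eodory existence--uniqueness theorem applies. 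First I would record that $G$ is well defined and bounded: $|u|\le C_{0}$ by hypothesis; $\tilde u=u^{S}+u^{R}-u_{m}$ is bounded uniformly in $t\in[0,T]$ and in the shift, since $u^{S}$ takes values in $[u_{-},u_{m}]$ (Lemma \ref{lem0}) and $u^{R}(1+t,\cdot)$ in $[u_{m},u_{+}]$ (Lemma \ref{lem1}(1)); $0\le w\le\frac{15}{2}u_{m}^{2}$ by \eqref{WFP}; and $\|u^{S}_{\xi}(\cdot+y)\|_{L^{1}(\mathbb R)}=\int_{\mathbb R}u^{S}_{\xi}\,d\xi=u_{m}-u_{-}$ is a finite constant independent of $y$, using the monotonicity and decay of $u^{S}$ from Lemma \ref{lem0}. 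Hence the $\xi$-integrand lies in $L^{1}(\mathbb R)$, $G$ is finite with $|G(t,y)|\le C=C(C_{0},\delta_{S},u_{m})$, and joint measurability of $u$ together with Fubini gives measurability of $t\mapsto G(t,y)$ for each fixed $y$.

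The substantive step is the Lipschitz estimate $|G(t,y_{1})-G(t,y_{2})|\le L\,|y_{1}-y_{2}|$ uniform in $t\in[0,T]$. The only $y$-dependence sits in the three translated smooth profiles $\tilde u(t,\cdot)$, $w\circ u^{S}$ and $u^{S}_{\xi}$, so I would use the elementary translation bounds $\|g(\cdot+y_{1})-g(\cdot+y_{2})\|_{L^{1}}\le\|g'\|_{L^{1}}|y_{1}-y_{2}|$ and $\|g(\cdot+y_{1})-g(\cdot+y_{2})\|_{L^{\infty}}\le\|g'\|_{L^{\infty}}|y_{1}-y_{2}|$, telescope $G(t,y_{1})-G(t,y_{2})$ one factor at a time, and in each term pair the $L^{1}$-difference of the factor being varied against the $L^{\infty}$-bounds of the remaining factors (and against $\|u(t,\cdot)\|_{L^{\infty}}\le C_{0}$). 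This requires $u^{S}_{\xi}\in L^{1}\cap L^{\infty}$ with $u^{S}_{\xi\xi}\in L^{1}$, $\tilde u_{\xi}=u^{S}_{\xi}+(u^{R})_{\xi}\in L^{\infty}$, and $\tfrac{d}{d\xi}w(u^{S}(\xi))=w'(u^{S})u^{S}_{\xi}\in L^{1}$, all of which follow from Lemma \ref{lem0} (in particular the algebraic decay $|u^{S}_{\xi}|\le C\delta_{S}^{3}(1+C\delta_{S}^{2}|\xi|)^{-2}$ and $|u^{S}_{\xi\xi}|\le C\delta_{S}^{2}|u^{S}_{\xi}|$ on $\xi>0$ supply the needed $L^{1}$-integrability), Lemma \ref{lem1}(2), and the pointwise bounds \eqref{WFP} on $w'$. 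The decisive structural feature is that $u$, being merely bounded, is never differentiated: it enters $G$ linearly and is always multiplied against $L^{1}$ factors, which is exactly why the hypothesis $u\in L^{\infty}$ with $|u|\le C_{0}$ suffices. This yields $L=L(C_{0},\delta_{S},u_{m})$.

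With boundedness, measurability in $t$, and the uniform Lipschitz bound in $y$ established, the Carath\'eodory theorem furnishes a unique absolutely continuous solution $\mathbf X$ on a maximal subinterval of $[0,T]$, and the a priori bound $|\dot{\mathbf X}(t)|=|G(t,\mathbf X(t))|\le C$ rules out finite-time blow-up of $\mathbf X$, so the solution exists and is unique on all of $[0,T]$. The same bound gives $|\dot{\mathbf X}(t)|\le C$ for a.e.\ $t\in[0,T]$ and, integrating from $\mathbf X(0)=0$, $|\mathbf X(t)|\le Ct$, which is the quantitative control on the shift required. I expect the only genuine obstacle to be the bookkeeping in the Lipschitz estimate, and specifically the verification of $L^{1}$-integrability of $u^{S}_{\xi\xi}$ and of $w'(u^{S})u^{S}_{\xi}$ on the algebraically decaying side $\xi>0$, which is precisely where the sharp profile bounds of Lemma \ref{lem0} enter.
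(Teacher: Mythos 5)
The paper offers no proof of this lemma at all --- it simply cites \cite{ref3,ref16} --- so there is no in-paper argument to compare against; your Carath\'eodory route is the natural and correct one. A small simplification: after the change of variable $\xi\mapsto\xi-y$, the piece $\int_{\mathbb{R}}\tilde u(t,\xi+y)\,w(u^{S}(\xi+y))\,u^{S}_{\xi}(\xi+y)\,d\xi$ is in fact independent of $y$, so only the $u$-contribution $\int_{\mathbb{R}} u(t,\xi)\,h(\xi+y)\,d\xi$ with $h:=(w\circ u^{S})\,u^{S}_{\xi}$ enters the Lipschitz estimate, and that step is precisely your translation bound $\|h(\cdot+y_{1})-h(\cdot+y_{2})\|_{L^{1}}\le\|h'\|_{L^{1}}|y_{1}-y_{2}|$, legitimate because $h'=w'(u^{S})(u^{S}_{\xi})^{2}+w(u^{S})u^{S}_{\xi\xi}\in L^{1}(\mathbb{R})$ by Lemma~\ref{lem0}; there is no need to telescope the $\tilde u$-factor or to invoke $\tilde u_{\xi}\in L^{\infty}$.

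The one genuine issue is that your argument delivers $|\dot{\mathbf X}(t)|\le C$ uniformly, and hence $|\mathbf X(t)|\le Ct$ after integration, whereas the lemma as printed claims $|\dot{\mathbf X}(t)|\le Ct$. You do not prove the printed inequality, and it cannot in general be true: at $t=0$ one has $\mathbf X(0)=0$ and $\dot{\mathbf X}(0)=\frac{32}{25u_{m}^{2}}\int_{\mathbb{R}}\phi_{0}\,w\,u^{S}_{\xi}\,d\xi$, which is generically nonzero, so $|\dot{\mathbf X}(t)|\le Ct$ fails as $t\to 0$. This is almost certainly a misprint in the paper: the only place the estimate is invoked --- to get $\|\tilde u^{\mathbf X}-\tilde u\|_{H^{2}}\le Ct\|u^{S}_{\xi}+u^{R}_{\xi}\|_{H^{2}}$ in the continuity argument --- requires precisely $|\mathbf X(t)|\le Ct$, not a bound on $\dot{\mathbf X}$. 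So the conclusion you actually prove ($|\dot{\mathbf X}|\le C$, $|\mathbf X(t)|\le Ct$) is the correct and intended one, but you should say explicitly that you are correcting the stated bound rather than silently substituting one inequality for the other.
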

	Then, we prove Theorem $\ref{theo1}$ by combining a local existence result with a priori estimates and dissipative estimates through a continuity argument. We note that system $\eqref{PFC}$ constitutes a quasi-linear symmetric hyperbolic system of $\left(\phi,\sqrt{\tau}r\right)$, see \cite{ref40}.
	\begin{theorem}\label{local}
		 (Local existence)
		For any $M>0$, there exists a constant $T_0=T_0(M)>0$ such that if
		\begin{align*}
			\left\lVert\left(u_0(\xi)-\tilde{u}(0,\xi), \sqrt{\tau}q_0(\xi)-\sqrt{\tau}\tilde{q}(0,\xi)\right)\right\rVert_{H^2(\mathbb{R})} \leq M,
		\end{align*}
		then the Cauchy problem $(\ref{PFC})$-$(\ref{initial})$ has a unique solution $(\phi,r)$ on $[0,T_0]$ such that 
		\begin{align*}
			\left(u-\tilde{u}, \sqrt{\tau}q-\sqrt{\tau}\tilde{q}\right)\in C^0\left([0,T_0];H^2\right)\cap C^1\left([0,T_0];H^1\right),
		\end{align*}
		and
		\begin{align*}
			\sup_{0\leq t\leq T_0} \|u(t,\cdot)-\tilde{u}(t,\cdot), \sqrt{\tau}q(t,\cdot)-\sqrt{\tau}\tilde{q}(t,\cdot)\|_{H^2(\mathbb{R})} \leq 2M.
		\end{align*}
	\end{theorem}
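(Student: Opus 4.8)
The plan is to regard the Cauchy problem \eqref{PFC}--\eqref{initial} as a quasi-linear symmetric hyperbolic system for the pair $V:=(\phi,\sqrt{\tau}\,r)$ coupled with the nonlocal scalar ODE \eqref{shift} for the shift $\mathbf X(t)$, and to construct the solution by a Picard iteration that alternates between solving \eqref{shift} with the current approximation of $\phi$ and solving the resulting \emph{linear} symmetric hyperbolic system with coefficients and sources frozen at the previous step. Indeed, the principal part of \eqref{PFC} reads
\[
\begin{pmatrix}1&0\\ 0&\tau\end{pmatrix}\partial_t\!\begin{pmatrix}\phi\\ r\end{pmatrix}+\begin{pmatrix}f'(\phi+\tilde u^{\mathbf X})-\sigma & -1\\ -1 & -\tau\sigma\end{pmatrix}\partial_\xi\!\begin{pmatrix}\phi\\ r\end{pmatrix}+\begin{pmatrix}0&0\\ 0&1\end{pmatrix}\!\begin{pmatrix}\phi\\ r\end{pmatrix}=\text{(lower-order terms)},
\]
whose $\partial_t$-coefficient is symmetric positive definite (for fixed $\tau>0$) and whose $\partial_\xi$-coefficient is symmetric, so the system falls within the classical framework (cf. \cite{ref40}). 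All background quantities $\tilde u^{\mathbf X},\tilde q^{\mathbf X},(u^S)^{\mathbf X}_\xi,(u^R)^{\mathbf X}_\xi,(u^R)^{\mathbf X}_{t\xi},F^{\mathbf X}$ are smooth in $\xi$ with the decay and boundedness furnished by Lemmas \ref{lem0} and \ref{lem1}, and depend on $t$ only through $t$ and through the absolutely continuous function $\mathbf X(t)$, which is ample regularity for the linear theory.

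First I would set $(\phi^0,r^0,\mathbf X^0)=(\phi_0,r_0,0)$ and, given $(\phi^n,r^n,\mathbf X^n)$, invoke Lemma \ref{le0} with $\phi=\phi^n$ (which is bounded via $H^2\hookrightarrow L^\infty$) to obtain a unique absolutely continuous $\mathbf X^{n+1}$ with $|\dot{\mathbf X}^{n+1}(t)|\le Ct$, and then solve the linear symmetric hyperbolic system for $(\phi^{n+1},r^{n+1})$ obtained by freezing the principal symbol at $f'(\phi^n+\tilde u^{\mathbf X^{n+1}})-\sigma$ and moving all remaining terms --- namely $\dot{\mathbf X}^{n+1}\bigl((u^S)^{\mathbf X}_\xi+(u^R)^{\mathbf X}_\xi\bigr)$, $F^{\mathbf X}$, $\tau\dot{\mathbf X}^{n+1}\tilde q^{\mathbf X}_\xi$ and $\tau(u^R)^{\mathbf X}_{t\xi}$ --- to the right-hand side; the classical existence theory for linear symmetric hyperbolic systems furnishes $(\phi^{n+1},r^{n+1})\in C^0([0,T];H^2)\cap C^1([0,T];H^1)$. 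To obtain a uniform lifespan I would run the basic energy method in $H^2$: testing the $\phi$-equation against $\phi,\phi_\xi,\phi_{\xi\xi}$ and the $r$-equation against $\tau r,\tau r_\xi,\tau r_{\xi\xi}$, integrating by parts (the skew coupling contributes $\int(r_\xi\phi+\phi_\xi r)\,d\xi=0$ and its differentiated analogues), and estimating the commutator and nonlinear terms by Moser and Gagliardo--Nirenberg inequalities together with the background bounds and $|\dot{\mathbf X}^{n+1}(t)|\le Ct$, yields
\[
\frac{d}{dt}\,\mathcal E_{n+1}(t)\le C\bigl(\mathcal E_{n+1}(t)+\mathcal E_{n}(t)+1\bigr),\qquad \mathcal E_k(t):=\|(\phi^k,\sqrt{\tau}\,r^k)(t)\|_{H^2}^2,
\]
so that, choosing $T_0=T_0(M)$ small, the bound $\sup_{[0,T_0]}\mathcal E_k\le(2M)^2$ propagates from $k=n$ to $k=n+1$.

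Next I would prove convergence in a low norm. The difference $\mathbf X^{n+1}-\mathbf X^n$ is controlled by $\int_0^t\|\phi^n-\phi^{n-1}\|_{L^2}$ through the Lipschitz dependence of the right-hand side of \eqref{shift} on $\phi$ (its kernel $w^{\mathbf X}(u^S)(u^S)^{\mathbf X}_\xi$ lies in $L^1\cap L^\infty$), while a plain $L^2$ energy estimate for $V^{n+1}-V^n$ --- in which the difference of the frozen symbols and sources is itself bounded by $\|V^n-V^{n-1}\|_{L^2}$ and by $|\mathbf X^{n+1}-\mathbf X^n|+|\dot{\mathbf X}^{n+1}-\dot{\mathbf X}^n|$ --- gives a contraction on $C([0,T_0];L^2)$ after shrinking $T_0$ if necessary. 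Hence $(\phi^n,r^n)\to(\phi,r)$ in $C([0,T_0];L^2)$; combining this with the uniform $H^2$ bound, interpolation yields convergence in $C([0,T_0];H^1)$ and a weak-$*$ limit in $L^\infty([0,T_0];H^2)$, which suffices to pass to the limit in \eqref{PFC}. Continuity of the limit in $t$ with values in $H^2$, and $C^1$ with values in $H^1$, then follows from the equation by the standard weak-continuity argument for symmetric hyperbolic systems, and one reads off $\sup_{[0,T_0]}\|(u-\tilde u,\sqrt{\tau}(q-\tilde q))\|_{H^2}\le 2M$. Uniqueness follows from the same $L^2$ energy estimate applied to the difference of two solutions issuing from the same data.

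I expect the main obstacle to be the nonlocal shift $\mathbf X(t)$: one must check that its feedback --- which enters both the principal symbol (via the translation inside $\tilde u^{\mathbf X}$) and the source terms --- is compatible with the iteration and, in particular, does not spoil the contraction in the low norm. This is manageable precisely because \eqref{shift} depends on $\phi$ only through a bounded linear functional with $L^1\cap L^\infty$ kernel, so that $\mathbf X$ and $\dot{\mathbf X}$ depend Lipschitz-continuously on $\phi\in C([0,T_0];L^2)$ with a constant that can be made as small as desired by taking $T_0$ small, and the translations $g\mapsto g^{\pm\mathbf X}$ are likewise Lipschitz in $\mathbf X$ since the background profiles and their derivatives are bounded in $L^\infty$. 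A secondary technical point is that $\tilde u^{\mathbf X}$ is only Lipschitz, not $C^1$, in $t$; this is harmless for the linear theory, which accommodates $W^{1,\infty}$-in-time coefficients (alternatively one may mollify $\mathbf X$ in the iteration and remove the mollification in the limit).
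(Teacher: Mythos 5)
The paper gives no proof of this local existence theorem: it merely remarks, in the paragraph preceding the statement, that system \eqref{PFC} is a quasi-linear symmetric hyperbolic system in the variables $(\phi,\sqrt{\tau}r)$ and cites Kato \cite{ref40}. Your proposal is a correct and complete elaboration of exactly that citation: you identify the Friedrichs symmetrizer (the diagonal $\mathrm{diag}(1,\tau)$ with symmetric $\partial_\xi$-coefficient), construct the solution by the standard Picard iteration with frozen coefficients, verify the uniform $H^2$ bound and $L^2$ contraction, and pass to the limit. The one genuinely nonstandard ingredient here, which you correctly isolate, is that the coefficients and sources depend on the unknown through the nonlocal shift $\mathbf{X}(t)$ solving \eqref{shift}; you handle this by interleaving Lemma \ref{le0} into the iteration and observing that the shift functional is Lipschitz in $\phi\in C([0,T];L^2)$ with a kernel in $L^1\cap L^\infty$, so it does not obstruct the low-norm contraction. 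A couple of minor imprecisions (e.g.\ ``testing against $\phi_\xi$'' where one means differentiating the equation and testing against the derivative; the difference $\dot{\mathbf X}^{n+1}-\dot{\mathbf X}^n$ involves $|\mathbf X^{n+1}-\mathbf X^n|$ in addition to $\|\phi^n-\phi^{n-1}\|_{L^2}$ because the kernel $w^{\mathbf X}(u^S)_\xi^{\mathbf X}$ also depends on $\mathbf X$, though this is still absorbed by Gronwall) do not affect the validity of the argument. In short, you take the same route the paper intends, but actually carry it out.
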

	The following proposition provides a priori estimates of the perturbation $(\phi,r)$.
	\begin{proposition}\label{p1}
		(A priori estimates)
		For given $u_{-}<0$ and $u_{\pm}$ satisfying $u_{-}<u_m=-\frac{u_{-}}{2}<u_{+}$. Let $\phi(t,\xi)$ and $r(t,\xi)$ being local solutions to system \eqref{PFC}, then there exist positive constants $\delta_{1},\varepsilon_{1}$ such that if the shock wave strength $\delta_{S}\leq \delta_{1}$ and the rarefaction wave strength $\delta_{R}=\delta_S^2$, and 
		\begin{equation}\label{js}
			\begin{aligned}
				\sup_{0\leq t\leq T} \|\phi(t,\cdot), \sqrt{\tau}r(t,\cdot)\|_{H^2(\mathbb{R})} \leq \varepsilon_1,
			\end{aligned} 
		\end{equation}
		then there exists a uniform constant $C>0$ (independent of $\tau$ and $T$) such that for $0\leq t\leq T$ 
		\begin{equation} 
			\begin{aligned}
				\sup_{0\leq t\leq T}&\left[\|\phi(t, \cdot)\|_{H^{2}}^2+ \tau\|r(t, \cdot)\|_{H^{2}}^2\right]+ \int_0^t \|r(s, \cdot)\|_{H^{2}}^2 ds + \int_0^t\|\phi_{\xi}\|_{H^2}^2 ds \\
				&\leq C \left(\|\phi_0\|_{H^2}^2+\tau\|r_0\|_{H^2}^2 +\delta_R^{\frac{8}{33}}\right),
			\end{aligned}
		\end{equation}
		where $\delta_S=|u_m-u_-|,\ \delta_{R}=|u_+-u_m|$. 
	\end{proposition}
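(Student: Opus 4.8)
The plan is to establish the a priori estimate \eqref{js} $\Rightarrow$ the energy inequality via a hierarchy of weighted energy estimates, closing the argument by absorbing all cross terms into the good dissipation using smallness of $\delta_S$, $\delta_R=\delta_S^2$, $\tau$, and $\varepsilon_1$. I would organize the proof into three stages exactly as the introduction previews: (i) a weighted $L^2$ (relative-entropy/$a$-contraction) estimate, (ii) higher-order $H^2$ estimates on $(\phi_\xi,\phi_{\xi\xi})$ and $\sqrt\tau(r,r_\xi,r_{\xi\xi})$, and (iii) the dissipative estimate recovering $\int_0^t\|\phi_\xi\|_{H^2}^2\,ds$ that the entropy estimate alone cannot see because of Cattaneo's constraint.

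\textbf{Stage 1: weighted $L^2$ estimate.} First I would multiply the first equation of \eqref{PFC} by the weight $w^{\mathbf{X}}(u^S)\phi$ and integrate over $\mathbb{R}$. The key algebraic point, inherited from \cite{ref16}, is that the flux term $\bigl(f(\phi+\tilde u^{\mathbf X})-f(\tilde u^{\mathbf X})\bigr)_\xi$ together with the shock-derivative term produces, after integration by parts and using the Oleinik entropy condition plus the specific structure of $w$ in \eqref{WF}, a good negative term controlling $\int w^{\mathbf X}(u^S)^{\mathbf X}_\xi\phi^2\,d\xi$ plus a term $\int w'(u^S)^{\mathbf X}_\xi|\phi|^3\,d\xi$-type contribution that is handled by the cubic flux sign structure; the shift equation \eqref{shift} is precisely designed so that the worst linear-in-$\phi$ term $\dot{\mathbf X}(t)\int w^{\mathbf X}(u^S)^{\mathbf X}_\xi\phi\,d\xi$ is cancelled (or turned into $-c|\dot{\mathbf X}(t)|^2$). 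The coupling term $-r_\xi$ is integrated by parts to $\int w^{\mathbf X} r\phi_\xi\,d\xi$ plus lower order; one then needs the $r$-equation multiplied by a suitable multiple of $w^{\mathbf X}r$ (using $\tau r_t-\tau\sigma r_\xi+r-\phi_\xi=\cdots$) to produce the damping $\|r\|_{L^2}^2$ and time-derivative $\tfrac{\tau}{2}\tfrac{d}{dt}\int w^{\mathbf X}r^2$, at the cost of a $\tau\dot{\mathbf X}\tilde q^{\mathbf X}_\xi$ term and the rarefaction error $\tau(u^R)^{\mathbf X}_{t\xi}$. The interaction error $F^{\mathbf X}$ and the rarefaction self-interaction are controlled by Lemma \ref{lem1}(2),(4),(5),(6): integrating in time, $\int_0^t\|F^{\mathbf X}\|_{L^2}^2\,ds\lesssim\delta_S\delta_R+\delta_R^{2\varepsilon/(2+\varepsilon)}$-type bounds yield the $\delta_R^{8/33}$ on the right. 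The outcome of Stage 1 is
\begin{equation*}
\sup_t\bigl(\|\phi\|_{L^2}^2+\tau\|r\|_{L^2}^2\bigr)+\int_0^t\!\!\Bigl(\|r\|_{L^2}^2+\int w^{\mathbf X}(u^S)^{\mathbf X}_\xi\phi^2\,d\xi+|\dot{\mathbf X}|^2\Bigr)ds\lesssim \|\phi_0\|_{L^2}^2+\tau\|r_0\|_{L^2}^2+\delta_R^{8/33},
\end{equation*}
but crucially with \emph{no} control of $\int_0^t\|\phi_\xi\|_{L^2}^2\,ds$, because differentiating the $r$-equation only gives back $\|\phi_\xi\|$ multiplied by $\tau$.

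\textbf{Stages 2 and 3: higher-order and dissipative estimates.} For the $H^1$ and $H^2$ estimates I would differentiate \eqref{PFC} once and twice in $\xi$, multiply $\partial_\xi^k\phi$ by itself (unweighted, since the weight's role is already played out at the $L^2$ level, though one may keep a bounded weight if needed for the quasilinear term), and $\partial_\xi^k r$ by $\tau\partial_\xi^k r$. The linear flux term now contributes $\int f'(\tilde u^{\mathbf X})_\xi(\partial_\xi^k\phi)^2$, which is \emph{not} sign-definite on the rarefaction part, but since $|f'(\tilde u^{\mathbf X})_\xi|\lesssim |(u^S)^{\mathbf X}_\xi|+|(u^R)^{\mathbf X}_\xi|$ and the latter is $O(\delta_R)=O(\delta_S^2)$ in $L^\infty$ and integrable-in-time by Lemma \ref{lem1}, it is absorbed once $\delta_S$ is small — this is exactly where "the shock strength must be sufficiently small" enters, unlike \cite{ref16}. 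The coupling $-\partial_\xi^k r_\xi$ against $\partial_\xi^k\phi$ pairs with $-\partial_\xi^k\phi_\xi$ against $\tau\partial_\xi^k r$ to produce, after summing the two equations, the damping $\|\partial_\xi^k r\|_{L^2}^2$ and — here is the decisive point for Stage 3 — the cross term $\int\partial_\xi^{k+1}\phi\,\partial_\xi^k r\,d\xi$ which, combined with the $r$-equation rewritten as $\phi_\xi=r+\tau r_t-\tau\sigma r_\xi+\cdots$, lets one trade $\|\partial_\xi^k\phi_\xi\|_{L^2}^2$ against $\|\partial_\xi^k r\|_{L^2}^2+\tau\|\partial_\xi^k r_t\|_{L^2}^2+(\text{errors})$. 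Concretely, multiplying the first equation of \eqref{PFC} by $-\phi_{\xi\xi}$ (and the differentiated versions by $-\partial_\xi^{k+2}\phi$) and using $\phi_\xi=r+\tau r_t-\tau\sigma r_\xi+\tau\dot{\mathbf X}\tilde q^{\mathbf X}_\xi+\tau(u^R)^{\mathbf X}_{t\xi}$ to replace the highest $\phi$-derivative, one obtains terms of the form $\tfrac{d}{dt}\int\phi_\xi r\,d\xi$ (a bounded "interaction energy", controlled since $\tau\|r\|_{H^2}^2,\|\phi\|_{H^2}^2\le\varepsilon_1^2$) plus $+c\|\phi_\xi\|_{H^2}^2$ minus quantities already controlled. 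The main obstacle, as the authors themselves flag, is precisely this Stage-3 recovery of $\int_0^t\|\phi_\xi\|_{H^2}^2\,ds$: the relaxed dissipation is a factor $\tau$ weaker, so every attempt to extract $\|\phi_\xi\|$ from the $r$-equation comes weighted by $\tau$ or multiplied by $r_t$, and one must carefully track that $\tau\|r_t\|$ is itself controlled (via the $r$-equation, $\tau r_t=\tau\sigma r_\xi-r+\phi_\xi-\cdots$, which is circular unless the coefficients are arranged so the $\phi_\xi$ contributions have a definite sign after summing all levels). I expect the bookkeeping of the linear combination coefficients — choosing constants $\kappa_0,\kappa_1,\kappa_2$ so that $\kappa_0(\text{Stage 1})+\kappa_1(H^1)+\kappa_2(H^2)+(\text{Stage 3})$ has all off-diagonal terms absorbed, using the explicit numerical constraints on $\tau$ in \eqref{tau} and $\delta_R=\delta_S^2$ — to be the technically heaviest part. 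Finally, integrating the assembled differential inequality in $t$, using the smallness \eqref{js} to absorb all quadratic-and-higher nonlinear terms (e.g. $\|\phi\|_{L^\infty}\|\phi_\xi\|^2\lesssim\varepsilon_1\|\phi_\xi\|_{H^1}^2$ by Sobolev embedding) into the left-hand dissipation, and noting the interaction energies $\int\phi_\xi r$, $\int\phi_{\xi\xi}r_\xi$ are dominated by $\tfrac14(\|\phi\|_{H^2}^2+\tau\|r\|_{H^2}^2)$, yields exactly the claimed estimate with the $\delta_R^{8/33}$ term coming from the accumulated rarefaction-interaction errors bounded via Lemma \ref{lem1}.
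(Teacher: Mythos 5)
Your three-stage organization — weighted relative-entropy estimate with the $a$-contraction shift, higher-order energy estimates, then a dissipative estimate recovering $\int_0^t\|\phi_\xi\|_{H^2}^2\,ds$ — is exactly the structure the paper uses (Lemmas \ref{lem2}, \ref{lem8}, \ref{lem9}, combined via the coefficient system \eqref{k1k2}--\eqref{nk1k2}). You also correctly identify the central obstruction: Cattaneo's law degrades the $\phi_\xi$-dissipation by a factor $\tau$, so Stage 1 cannot close by itself, and this is precisely why higher-order estimates and small $\delta_S$ become necessary here, unlike in \cite{ref16}.

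The one place your sketch diverges from — and is weaker than — the paper's argument is the concrete mechanism in Stage 3. You propose to multiply the first equation of \eqref{PFC} by $-\phi_{\xi\xi}$ (resp.\ $-\partial_\xi^{k+2}\phi$) and then substitute $\phi_\xi = r+\tau r_t -\tau\sigma r_\xi+\cdots$. But $-\int\phi_t\phi_{\xi\xi}\,d\xi = \tfrac{1}{2}\tfrac{d}{dt}\int\phi_\xi^2\,d\xi$ is a time derivative, not a time-integrated dissipation, so this pairing does not by itself yield $\int_0^t\|\phi_\xi\|_{L^2}^2\,ds$; and after the substitution the coupling $\int r_\xi\phi_{\xi\xi}$ turns into $\int r_\xi^2+\cdots$, which is again the $r$-dissipation, not the $\phi_\xi$-dissipation. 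The paper's Lemma \ref{lem9} instead multiplies $\partial_\xi^k$ of the \emph{second} equation of \eqref{HPFC} by $-w\,\partial_\xi^{k+1}\phi^{-\mathbf{X}}$: the term $-\partial_\xi^{k+1}\phi^{-\mathbf{X}}$ already present in that equation then directly produces the coercive quantity $\int w(\partial_\xi^{k+1}\phi^{-\mathbf{X}})^2\,d\xi$, while the troublesome $\tau\partial_\xi^k r_t$ factor is integrated by parts in time (giving the bounded cross energy $\tau\int\partial_\xi^k r\,\partial_\xi^{k+1}\phi\,w\,d\xi$) and the remaining $\partial_\xi^{k+1}\phi_t$ is re-expressed via the $\phi$-equation. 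Your description gestures at the same bookkeeping (the ``interaction energy'' $\tfrac{d}{dt}\int\phi_\xi r\,d\xi$, the circularity concern for $\tau r_t$), so the idea is there, but the specific multiplier you name does not accomplish it; the $r$-equation must be the one multiplied by $-w\partial_\xi^{k+1}\phi$. A minor further difference: the paper retains the weight $w$ in Lemmas \ref{lem8} and \ref{lem9}, which eases the absorption of the $w'u^S_\xi$ commutators; dropping it as you suggest is workable but trades away some structure.
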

	The proof of Proposition \ref{p1} will be presented in Section \ref{sec4}.
	
	\subsection{The continuity arguments}
	Under the assumption $\eqref{js}$, we can therefore extend the local solution to a global solution for all $t\in\left[0,+\infty\right)$ via the continuity argument. We define
	\begin{align*}
		\varepsilon^{*}:=\min\left\{\frac{\varepsilon_{1}}{4},\sqrt{\frac{\varepsilon_{1}^2}{36C_0}-\delta_R^{\frac{8}{33}}}\right\},
	\end{align*}
	where $\varepsilon_1,C_0$ are given by Proposition $\ref{p1}$. Clearly, $\varepsilon^*>0$ by the smallness of $\delta_{1}$. Let $M=\frac{\varepsilon_1}{4}$, the local existence result established in Theorem $\ref{local}$ ensures the existence of a positive constant $T_0$ such that 
	\begin{align}\label{000}
		\sup_{0\leq t\leq T_0} \|u(t,\cdot)-\tilde{u}(t,\cdot), \sqrt{\tau}q(t,\cdot)-\sqrt{\tau}\tilde{q}(t,\cdot)\|_{H^2(\mathbb{R})} \leq \frac{\varepsilon_1}{2}.
	\end{align}
	By the Lemma $\ref{le0}$, we have $$\sup_{0\leq t\leq T_0,\ x\in\mathbb{R}}|u(t,x)|\leq C,$$ 
	and 
	$$|\dot{\mathbf{X}}(t)|\leq Ct,\ \ \forall t\in[0,T_0].$$
	Consequently, we can choose $T_0$ sufficiently small so that for all $t\in[0,T_0]$, we obtain
	\begin{align}\label{111}
		\lVert\tilde{u}^{\mathbf{X}}-\tilde{u}\rVert_{H^2(\mathbb{R})}+\sqrt{\tau}\lVert\tilde{q}^{\mathbf{X}}-\tilde{q}\rVert_{H^2(\mathbb{R})}
		&\leq Ct\lVert u_{\xi}^S+u_{\xi}^R\rVert_{H^2(\mathbb{R})}+C\sqrt{\tau}t\lVert q_{\xi}^S+u_{\xi\xi}^R\rVert_{H^2(\mathbb{R})}\nonumber\\
		&\leq \frac{\varepsilon_1}{8}.
	\end{align}
	Let energy term $E(t)$ be defined as
	\begin{align*}
		E(t)=\sup_{0\leq s\leq t}\left\lVert\left(\phi,\sqrt{\tau}r\right)\right\rVert_{H^2(\mathbb{R})}.
	\end{align*}
	From $(\ref{000})$ and $(\ref{111})$, we have 
	\begin{align*}
		E(T_0)\leq \frac{\varepsilon_{1}}{2}+\frac{\varepsilon_1}{8}<\varepsilon_1.
	\end{align*}
	We next turn to the maximal existence time:
	\begin{align*}
		T_{max}:=\sup\left\{t>0|E(t)\leq \varepsilon_1\right\}.
	\end{align*}
	For now, we assume $E(0)\leq \varepsilon_0$. If $T_{max}<+\infty$, then $E(T_{max})=\varepsilon_1.$ While it holds from Proposition $\ref{p1}$ that
	\begin{align*}
		E(T_{max})\leq \sqrt{C_0\left(\|\phi_0\|_{H^2}^2+\tau\|r_0\|_{H^2}^2 +\delta_R^{\frac{8}{33}}\right)}=\sqrt{C_0E(0)+C_0\delta_{R}^{\frac{8}{33}}}\leq \frac{\varepsilon_1}{4},
	\end{align*}
	which contradicts $E(T_{max})=\varepsilon_1.$ Thus, we have $T_{max}=+\infty$. Together with Proposition $\ref{p1}$, this yields
	\begin{equation} \label{T>0}
		\begin{aligned}
			\sup_{t\geq0 }&\left[\|\phi(t, \cdot)\|_{H^{2}}^2+ \tau\|r(t, \cdot)\|_{H^{2}}^2\right]+ \int_0^t \|r(s, \cdot)\|_{H^{2}}^2 ds + \int_0^t \int_{\mathbb{R}}\|\phi_{\xi}\|_{H^2}^2 d\xi ds \\
			&\leq C \left(\|\phi_0\|_{H^2}^2+\tau\|r_0\|_{H^2}^2 +\delta_R^{\frac{8}{33}}\right).
		\end{aligned}
	\end{equation}
	In addition, we need to check that the initial energy $E(0)=\|\phi_0\|_{H^2}^2+\tau\|r_0\|_{H^2}^2$ is sufficiently small.
	\begin{align*}
		\small
		E(0)=&\left\lVert\left(\phi_0,\sqrt{\tau}r_0\right)\right\rVert_{H^2(\mathbb{R})}\\=&\left\| u_0 - (u^S(x; u_-, u_m) + u^R(1, x; u_m, u_+) - u_m) \right\|_{H^2(\mathbb{R})}\\
		&+\sqrt{\tau}\left\| q_0 - (q^S(x) + u_{\xi}^R(1, x; u_m, u_+)) \right\|_{H^2(\mathbb{R})}\\
		\leq& \left\| u_0 - u^S(x; u_-, u_m) \right\|_{L^2(\mathbb{R}_-)} + \left\| u^R(1, x; u_m, u_+) - u_m \right\|_{L^2(\mathbb{R}_-)}\\
		&+ \left\| u_0 - (u^S(x; u_-, u_m) + u_+ - u_m) \right\|_{L^2(\mathbb{R}_+)} + \left\| u^R(1, x; u_m, u_+) - u_+ \right\|_{L^2(\mathbb{R}_+)}\\
		&+ \left\| u_{0x}(\cdot) - u^S_x(\cdot; u_-, u_m) \right\|_{H^1(\mathbb{R})} + \left\| u^R_x(1, x; u_m, u_+) \right\|_{H^1(\mathbb{R})}\\
		&+\sqrt{\tau}\left\| q_0  \right\|_{H^2(\mathbb{R})}+(1+C(\tau))\sqrt{\tau}\left\| u_{\xi}^S(x;u_-,u_m)\right\|_{H^2(\mathbb{R})}\\
		&+\sqrt{\tau}\left\|u_{\xi}^R(1, x; u_m, u_+)) \right\|_{H^2(\mathbb{R})}\\
		\leq& \left\| u_0 - u^S(x; u_-, u_m) \right\|_{L^2(\mathbb{R}_-)} + \left\| u_0 - (u^S(x; u_-, u_m) + u_+ - u_m) \right\|_{L^2(\mathbb{R}_+)}\\
		&+ \left\| u_{0x}(\cdot) - u^S_x(\cdot; u_-, u_m) \right\|_{H^1(\mathbb{R})} +\sqrt{\tau}\left\| q_0  \right\|_{H^2(\mathbb{R})} +C_1\left( \delta_S+\delta_R\right),
	\end{align*}
	where $C_1$ is given by the Lemma $\ref{lem0}$ and $\ref{lem1}$. Thus, we take $\delta_{0}=\min\{\delta_{1},\frac{\varepsilon_{1}}{2C_1}\},\varepsilon_{0}=\frac{\varepsilon_{1}^2}{32C_0}$. Then, in view of \eqref{C1}, we arrive at the estimate $E(0)\leq \varepsilon_0$.
	
	\subsection{Time-asymptotic behavior}
	To establish the time-asymptotic behavior $(\ref{time})$ and $(\ref{X(t)})$, we define $g(t)=\lVert\phi_{\xi}\rVert_{L^{2}(\mathbb{R})}^2+\tau\lVert r_{\xi}\rVert_{L^{2}(\mathbb{R})}^2$. From the estimate $(\ref{T>0})$, we have
	\begin{align*}
		\int_{0}^{+\infty} |g'(t)| dt = \int_{0}^{+\infty} \left|\frac{d}{dt} \|\phi_{\xi}\|_{L^2(\mathbb{R})}^{2}+\tau\frac{d}{dt} \|r_{\xi}\|_{L^2(\mathbb{R})}^{2}\right| dt\leq C.
	\end{align*}
	Hence, $$\lim_{t \to +\infty}g(t)=\lim_{t \to +\infty}\left(\lVert\phi_{\xi}\rVert_{L^{2}(\mathbb{R})}^2+\tau\lVert r_{\xi}\rVert_{L^{2}(\mathbb{R})}^2\right)=0.$$
	Applying the Sobolev inequality yields
	\begin{align*}
		&\lim_{t \to +\infty} \|\phi(t, \cdot)\|_{L^{\infty}(\mathbb{R})} \leq C\lim_{t \to +\infty}  \|\phi(t, \cdot)\|_{L^2(\mathbb{R})}^{\frac{1}{2}} \|\phi_{\xi}(t, \cdot)\|_{L^2(\mathbb{R})}^{\frac{1}{2}} = 0,\\
		&\lim_{t \to +\infty} \sqrt{\tau}\|r(t, \cdot)\|_{L^{\infty}(\mathbb{R})} \leq C\sqrt{\tau}\lim_{t \to +\infty}  \|r(t, \cdot)\|_{L^2(\mathbb{R})}^{\frac{1}{2}} \|r_{\xi}(t, \cdot)\|_{L^2(\mathbb{R})}^{\frac{1}{2}} = 0.
	\end{align*}
	Moreover, by referring to the definition of the shift $\dot{\mathbf{X}}(t)$ (see $(\ref{shift})$), it follows that
	\begin{align*}
		|\dot{\mathbf{X}}(t)| \leq C \|\phi\|_{L^{\infty}(\mathbb{R})}\to 0, \ as\ t\to +\infty.
	\end{align*}
	By Lemma $(\ref{lem1})$ and $(\ref{wr})$, we have 
	\begin{align*}
		&\sup_{x \in \mathbb{R}} \left| u^{r} \left(\frac{x}{t}; u_{m}, u_{+}\right) - u^{R} \left(1 + t, x + \mathbf{X}(t); u_{m}, u_{+}\right) \right|\\
		\leq& \sup_{x \in \mathbb{R}} \left| u^{r} \left(\frac{x}{t}; u_{m}, u_{+}\right) - u^{r} \left(\frac{x}{1+t}; u_{m}, u_{+}\right) \right|\\
		&+ \sup_{x \in \mathbb{R}} \left| u^{r} \left(\frac{x}{1+t}; u_{m}, u_{+}\right) - u^{r} \left(\frac{x + \mathbf{X}(t)}{1+t}; u_{m}, u_{+}\right) \right|\\
		&+ \sup_{x \in \mathbb{R}} \left| u^{r} \left(\frac{x + \mathbf{X}(t)}{1+t}; u_{m}, u_{+}\right) - u^{R} \left(1 + t, x + \mathbf{X}(t); u_{m}, u_{+}\right) \right|\\
		\leq&C(1 + t)^{-1}+C\left|\dot{\mathbf{X}}(t)\right|+C(1+t)^{-\frac{1}{2}}
		\to0, \ (t\to+\infty).
	\end{align*}
	Therefore, the proof of Theorem $\ref{theo1}$ is finished.

	\section{A priori estimates}\label{sec4}
	This section is devoted to establishing the uniform-in-time a priori estimates stated in Proposition \ref{p1}. We assume that the Cauchy problem (\ref{PFC}) under the initial condition $$\left(\phi(t,\xi),r(t,\xi)\right)|_{t=0}=\left(\phi_{0},r_{0}\right)$$ admits a solution $\phi,r\in \mathbf{C}\left([0,T];H^{2}(\mathbb{R})\right)$ for some given time $T>0.$ We start with $L^2$ relative entropy estimate for $\left(\phi,r\right)$.
	
	\subsection{$L^2$ estimates}
	Let $U(t,\xi):=(u,q)^{T}$ and $\tilde{U}(t,\xi):=(\tilde{u},\tilde{q})^{T}$, and we define a relative entropy quantity:
	\begin{align*}
		\eta(U|\tilde{U})=\frac{|u-\tilde{u}|^{2}+\tau|q-\tilde{q}|^{2}}{2}.
	\end{align*}
	\begin{lemma}\label{lem2}
		Assume that $\tau$ satisfies \eqref{tau}, there exist positive constants $\delta_{0},\varepsilon_{2}>0$ such that if the rarefaction wave strength $\delta_{R}=\delta_{S}^2$ and shock wave strength $\delta_{S}<\delta_{0}$, and $$\sup_{0\leq t\leq T} \|\phi(t,\cdot), \sqrt{\tau}r(t,\cdot)\|_{H^2(\mathbb{R})} \leq \varepsilon_2,$$ then there exists a positive constant C (independent of $\tau$ and $T$) such that for $t\in[0,T]$
		\begin{align*}
			&\int_{\mathbb{R}}w\eta(U|\tilde{U}^{\mathbf{X}})d\xi+ \left(2-2k_{1}\right)\int_0^t\int_{\mathbb{R}}r^2wd\xi ds + \frac{25u_m^2}{64}\int_0^t |\dot{\mathbf{X}}(s)|^2 ds \\
			&+ \frac{4u_m^3}{5}\int_0^t \int_{\mathbb{R}} \phi^2(u^S)_\xi^{\mathbf{X}} d\xi ds+3u_m\int_0^t \int_{\mathbb{R}} \phi^2w(u^R)_\xi^{\mathbf{X}} d\xi ds+2\int_0^t\mathbf{G}^{SR}(s)ds \\
			\leq& Cu_m^2 (\|\phi_0\|_{L^2}^2+\tau\|r_0\|_{L^2}^2 )+2k_2\int_{0}^t\int_{\mathbb{R}}\left(\phi_{\xi}\right)^2wd\xi ds+Cu_m^2\delta_R^{\frac{8}{33}},
		\end{align*}
		where $k_1,k_2$ satisfy the condition $(\ref{k1k2})$ below and
		\begin{align*}
			\mathbf{G}^{SR}(t):=3 \int_{\mathbb{R}} (\phi^{-X})^2 w(u^R - u_m) u_{\xi}^S d\xi - \frac{3}{2} \int_{\mathbb{R}} (\phi^{-X})^2 (u^R - u_m)^2 w' u_{\xi}^S d\xi.
		\end{align*}
		\begin{remark}
			Unlike the case in \cite{ref16}, Lemma $\ref{lem2}$ fails to provide an estimate for $\phi_{\xi}$. Consequently, we must leverage both the dissipation estimates and the high-order energy estimates to establish control over this first-order term.
		\end{remark}
	\end{lemma}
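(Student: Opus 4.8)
\emph{Strategy of proof.} The plan is to run a weighted relative-entropy (weighted $L^2$) estimate on the perturbation system \eqref{PFC}, following the $a$-contraction-with-shift scheme of \cite{ref16}, while carefully tracking the terms that the weak Cattaneo dissipation cannot absorb. Concretely, I would multiply \eqref{PFC}$_1$ by $w\phi$ and \eqref{PFC}$_2$ by $wr$, with $w$ the weight \eqref{WF} evaluated along the shifted shock profile, add the two identities and integrate over $\mathbb{R}$. Since $\partial_u\eta=u-\tilde u$ and $\partial_q\eta=\tau(q-\tilde q)$, the time-derivative terms reassemble into $\frac{d}{dt}\int_{\mathbb{R}} w\,\eta(U|\tilde U^{\mathbf X})\,d\xi$ plus a contribution $\propto\dot{\mathbf X}\int w'(u^S)^{\mathbf X}_\xi\,\eta\,d\xi$ from the $t$-dependence of the shifted profile; integrating by parts the flux term and the couplings $w\phi\,r_\xi$, $wr\,\phi_\xi$ then produces: (i) a quadratic form in $\phi$ with density $(u^S)^{\mathbf X}_\xi$ (the ``shock'' good term); (ii) the relaxation dissipation $\int wr^2$; (iii) the cross term $\int wr\,\phi_\xi$ and lower-order pieces weighted by $w'(u^S)^{\mathbf X}_\xi$; (iv) the shift terms $\dot{\mathbf X}\int w\phi\big((u^S)^{\mathbf X}_\xi+(u^R)^{\mathbf X}_\xi\big)$ and $\tau\dot{\mathbf X}\int wr\,\tilde q^{\mathbf X}_\xi$; (v) the rarefaction term $\propto\int w\phi^2(u^R)^{\mathbf X}_\xi$ together with the shock--rarefaction cross terms, which I would collect into $\mathbf G^{SR}$; and (vi) the error terms $\int w\phi\,F^{\mathbf X}$ and $\tau\int wr\,(u^R)^{\mathbf X}_{t\xi}$.

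The core of the argument is two-fold. First, for $f(u)=u^3$ one has $f'(u)-\sigma=3(u-u_m)(u+u_m)$, and the three-piece weight \eqref{WF} is engineered precisely so that the $\phi$-quadratic form assembled from (i), the $w'$-part of (iii) and the $w'$-part of (iv) is bounded below by a fixed positive multiple of $(u^S)^{\mathbf X}_\xi\phi^2$ on each of the regions $u^S\in(u_-,0)$, $[0,\tfrac{u_m}{2})$, $[\tfrac{u_m}{2},u_m)$; using \eqref{WFP}, Lemma \ref{lem0}, and taking $\delta_S$ and $\tau$ small enough that the $\tau$-corrections built into \eqref{1S} and the $O(\phi)$ remainders do not spoil positivity, this yields the coefficient $\tfrac{4u_m^3}{5}$ in front of $\int_0^t\!\!\int\phi^2(u^S)^{\mathbf X}_\xi$. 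Second, the shift \eqref{shift} is designed so that the leading linear-in-$\phi$ piece $\dot{\mathbf X}\int w\phi(u^S)^{\mathbf X}_\xi$ becomes exactly $\propto|\dot{\mathbf X}|^2$; completing the square gives the gain $\tfrac{25u_m^2}{64}\int_0^t|\dot{\mathbf X}|^2$, and the remaining $\dot{\mathbf X}$-contributions (those carrying $(u^R)^{\mathbf X}_\xi$ or $\tilde q^{\mathbf X}_\xi$) are absorbed by Young's inequality into the good terms, using $|\dot{\mathbf X}|\le Ct$ from Lemma \ref{le0} and the decay of $(u^R)_\xi$ from Lemma \ref{lem1}. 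The rarefaction term (v) is itself nonnegative because $(u^R)^{\mathbf X}_\xi>0$, producing $3u_m\int_0^t\!\!\int\phi^2 w(u^R)^{\mathbf X}_\xi$.

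Two items then remain. For the coupling in (iii): since the relative entropy produces no $\phi_\xi$-dissipation, I would split $\int wr\,\phi_\xi$ by Young's inequality into a multiple of $\int w(\phi_\xi)^2$ plus a multiple of $\int wr^2$, subtract the $r^2$ part from $\int wr^2$ to leave the reduced coefficient $2-2k_1$ (the constraints \eqref{k1k2} on $k_1,k_2$ being exactly what makes this bookkeeping consistent), and carry the $k_2$-weighted $\int w(\phi_\xi)^2$ term to the right-hand side, to be recovered later from the high-order estimates; this is the structural difference from \cite{ref16}, where the entropy already controls $\phi_\xi$. For the error term $F^{\mathbf X}=\big(f(\tilde u^{\mathbf X})-f((u^S)^{\mathbf X})-f((u^R)^{\mathbf X})\big)_\xi-(u^R)^{\mathbf X}_{\xi\xi}$: its shock--rarefaction interaction part is small because $u^S$ and $u^R$ are concentrated on essentially disjoint regions (Lemma \ref{lem0}, Lemma \ref{lem1}(4)), while the slowest-decaying contributions --- from the fan $\lambda_- t\le x\le\lambda_+ t$ and from Lemma \ref{lem1}(5)--(6) --- after integration in $t$ with a suitably tuned $\varepsilon\in(0,1)$ and the scaling $\delta_R=\delta_S^2$ give the fixed power $\delta_R^{8/33}$; the term $\tau\int wr\,(u^R)^{\mathbf X}_{t\xi}$ is treated likewise, split between the $\int wr^2$ dissipation and a remainder controlled by Lemma \ref{lem1}. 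Integrating the resulting differential inequality in $t$ then yields the claimed estimate.

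I expect the main obstacle to be precisely the coercivity in the second paragraph. Because the Oleinik shock here is \emph{degenerate} ($f'(u_+)=\sigma$, so $u^S-u_+$ decays only algebraically, Lemma \ref{lem0}), the weight must be matched to the cubic flux with essentially no slack, and Cattaneo's relaxation inserts $\tau$-proportional corrections --- through the modified profile equation \eqref{1S} and the $r$--$\phi_\xi$ coupling --- that are absent in \cite{ref16}. Keeping both the assembled $\phi$-quadratic form and the reduced $r^2$-coefficient $2-2k_1$ positive is what forces the quantitative smallness of $\tau$ in \eqref{tau} together with $\delta_S\le\delta_0$, and pins down the explicit numerical constants appearing in the statement.
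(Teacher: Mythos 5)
Your overall plan matches the skeleton of the paper's argument: the weighted relative-entropy identity (Lemma~\ref{lem2.5}), the decomposition $\dot{\mathbf X}\mathbf Y+J^{good}+J^{bad}=\mathbf G^S+\mathbf G^R+\mathbf G^{SR}+\mathbf N+\mathbf J$, the use of the shift \eqref{shift} to produce $|\dot{\mathbf X}|^2$, the sign of $(u^R)^{\mathbf X}_\xi$ to keep $\mathbf G^R,\ \mathbf G^{SR}\geq 0$, the wave-interaction Lemma~\ref{WIE} to reduce the error terms to a fixed power $\delta_R^{8/33}$, and the decision to carry an unabsorbed multiple of $\int w(\phi_\xi)^2$ to the right-hand side. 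So the architecture is right.

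There is, however, a genuine gap in your treatment of the coercivity of $\mathbf G^S$, and it spreads to your explanation of the $\phi_\xi$ deficit. You assert that the $\phi$--quadratic form assembled from the shock terms is \emph{pointwise} bounded below by a positive multiple of $(u^S)^{\mathbf X}_\xi\phi^2$, with only $\tau$-corrections threatening positivity. That is false even at $\tau=0$: on the region $u^S\in(u_-,0)$ one computes, with $\sigma=3u_m^2$, $w=\tfrac52 u_m(u_m-u^S)$, $w'=-\tfrac52 u_m$, $w''=0$,
\begin{equation*}
\bigl(\sigma-3(u^S)^2\bigr)w' + 3u^S w \;=\; -\tfrac{15}{2}u_m^2\,(u_m-u^S)\;<\;0,
\end{equation*}
so the density in $\mathbf G^S$ is strictly negative there. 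The positivity in Lemma~\ref{lemgs} is not pointwise; it comes from a weighted Poincar\'e-type inequality (as in \cite{ref16} and the Kang--Vasseur $a$-contraction scheme), and that Poincar\'e step is exactly what produces the leading $-\tfrac56\int w(\phi_\xi)^2$ loss visible in the coefficient $\tfrac56+\tfrac{9\tau\sigma u_m^2}{8}$ of Lemma~\ref{lemgs}, and hence the dominant part $\tfrac{751}{900}>\tfrac56$ of $k_2$ in \eqref{k1k2}. By contrast, your proposed source of the $\phi_\xi$ deficit --- splitting the cross term $\int w r\,\phi_\xi$ by Young --- is not viable as stated: the companion $\int w\phi\,r_\xi$ cannot be Young-bounded without a $\int w r_\xi^2$ loss, and the paper instead integrates the pair by parts to $-\int r\phi\,w' u^S_\xi$ and substitutes $r$ via Cattaneo's law \eqref{PFC}$_2$, recovering the sign-favorable $\tfrac12\int\phi^2(w'u^S_\xi)_\xi$ plus the $\tau$-errors collected in $R_0^\tau$. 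In short: you correctly anticipate that a $\phi_\xi$ deficit must be carried, but you misidentify its origin (Poincar\'e, not Young on the cross-coupling) and omit the Poincar\'e step without which $\mathbf G^S$ has no sign.

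A secondary point: $\mathbf J=3\int(\phi^{-\mathbf X})^2(u^S-u_m)\,w\,u^R_\xi\,d\xi$ is not absorbed by sign; it has no sign and is controlled only via the interaction decay in Lemma~\ref{WIE} combined with a Gagliardo--Nirenberg interpolation, which contributes another $\tfrac{1}{1800}\int w(\phi_\xi)^2$; the same is true of $\mathbf Z$ via Lemma~\ref{lem7}. These additional $\phi_\xi$ contributions are what raise $\tfrac56$ to $\tfrac{751}{900}$ in \eqref{k1k2}$_6$, so your accounting of $k_2$ would not close without them.
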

	The proof of Lemma $\ref{lem2}$ will be established by means of the following lemmas.
	\begin{lemma}\label{lem2.5}
		Let $\mathbf{X}(t)$ and $w(u^{S}(\xi))$ be respectively the shift function in $(\ref{shift})$ and the weight function in $(\ref{WF})$. Then the following equation hold:
		\begin{equation}\label{yds} 
			\frac{d}{dt} \int_{\mathbb{R}} w \eta(U^{-\mathbf{X}}|\tilde{U}) d\xi + \dot{\mathbf{X}}(t)\mathbf{Y}(t) + J^{good}(t) + J^{bad}(t) = \mathbf{Z}(t), 
		\end{equation}
		where
		\small
		\begin{equation*} 
			\begin{aligned}
				\mathbf{Y}(t) &:= \int_{\mathbb{R}} \phi^{-\mathbf{X}} w u_{\xi}^{S} d \xi+\int_{\mathbb{R}} \phi^{-\mathbf{X}} w u_{\xi}^{R} d \xi-\frac{1}{2} \int_{\mathbb{R}}\left(\phi^{-\mathbf{X}}\right)^{2} w^{\prime} u_{\xi}^{S} d \xi-\tau\int_{\mathbb{R}}\phi^{-\mathbf{X}}q_{\xi}^{S}w'u_{\xi}^{S}d\xi\\
				&-\tau \int_{\mathbb{R}}r^{-\mathbf{X}}\left(u_{\xi}^{S}+u_{\xi}^{R}\right)w'u_{\xi}^{S}d\xi+\tau\int_{\mathbb{R}}r^{-\mathbf{X}}w\tilde{q}_{\xi}d\xi-\tau\int_{\mathbb{R}}\phi^{-\mathbf{X}}u_{\xi\xi}^{R}w'u_{\xi}^{S}d\xi\\
				&-\frac{\tau}{2}\int_{\mathbb{R}}\left(r^{-\mathbf{X}}\right)^{2} w^{\prime} u_{\xi}^{S} d \xi,
			    \end{aligned}
			\end{equation*}
			\begin{equation*}
				\begin{aligned}
				J^{\text {good }}(t)&:=\int_{\mathbb{R}} \left(r^{-\mathbf{X}}\right)^{2} w d\xi-3\int_{\mathbb{R}}\left(\phi^{-\mathbf{X}}\right)^{2}\left(u^{S}\right)^{2} w^{\prime} u_{\xi}^{S} d \xi+3\int_{\mathbb{R}}\left(\phi^{-\mathbf{X}}\right)^{2}\left(u^{R}-u_{m}\right) w u_{\xi}^{S} d \xi\\
				&+3 \int_{\mathbb{R}}\left(\phi^{-\mathbf{X}}\right)^{2} u^{R} w u_{\xi}^{R} d \xi-\frac{3}{2} \int_{\mathbb{R}}\left(\phi^{-\mathbf{X}}\right)^{2}\left(u^{R}-u_{m}\right)^{2} w^{\prime} u_{\xi}^{S} d \xi-\frac{3}{4} \int_{\mathbb{R}}\left(\phi^{-\mathbf{X}}\right)^{4} w^{\prime} u_{\xi}^{S} d \xi,
		    	\end{aligned}
	     	\end{equation*}
	        \begin{equation*}
	            \begin{aligned}
				J^{\text {bad }}(t)&:=3 \int_{\mathbb{R}}\left(\phi^{-\mathbf{X}}\right)^{2} w u^{S} u_{\xi}^{S} d \xi+3 \int_{\mathbb{R}}\left(\phi^{-\mathbf{X}}\right)^{2} w\left(u^{S}-u_{m}\right) u_{\xi}^{R} d \xi\\
				&-3\int_{\mathbb{R}}\left(\phi^{-\mathbf{X}}\right)^{2}\left(u^{R}-u_{m}\right) u^{S} w^{\prime} u_{\xi}^{S} d \xi+\sigma \int_{\mathbb{R}}\left(\phi^{-\mathbf{X}}\right)^{2} w^{\prime} u_{\xi}^{S} d \xi\\
				&-\frac{1}{2} \int_{\mathbb{R}}\left(\phi^{-\mathbf{X}}\right)^{2} \left(u_{\xi}^{S}\right)^{2} w^{\prime \prime} d \xi+\int_{\mathbb{R}}\left(\phi^{-\mathbf{X}}\right)^{3} w\left(u_{\xi}^{S}+u_{\xi}^{R}\right) d \xi\\
				&-2\int_{\mathbb{R}}\left(\phi^{-\mathbf{X}}\right)^{3}\left(u^{S}+u^{R}-u_{m}\right) w^{\prime} u_{\xi}^{S} d \xi+\frac{\tau\sigma}{2}\int_{\mathbb{R}}\left(r^{-\mathbf{X}}\right)^{2}w'u_{\xi}^{S}d\xi+\tau\sigma\int_{\mathbb{R}}r^{-\mathbf{X}}u_{\xi\xi}^Rwd\xi\\
				&+6\tau\int_{\mathbb{R}}r^{-\mathbf{X}}u^R(u_{\xi}^R)^2wd\xi+3\tau\int_{\mathbb{R}}r^{-\mathbf{X}}(u^R)^2u_{\xi\xi}^Rwd\xi+R_{0}^{\tau}(t), \\
				\mathbf{Z}(t)&:=- \int_{\mathbb{R}} F \phi^{-\mathbf{X}} w d\xi+\tau\int_{\mathbb{R}}r^{-\mathbf{X}}Fw'u_{\xi}^{S}d\xi+\tau\frac{d}{dt}\int_{\mathbb{R}}r^{-\mathbf{X}}\phi^{-\mathbf{X}}w'u_{\xi}^{S}d\xi,
			\end{aligned}
		\end{equation*}
		where
		\begin{equation}
			\begin{aligned}
				R_{0}^{\tau}(t):=&- \tau\sigma\int_{\mathbb{R}}r^{-\mathbf{X}}\phi^{-\mathbf{X}}w''\left(u_{\xi}^{S}\right)^{2}d\xi-\tau\sigma\int_{\mathbb{R}}r^{-\mathbf{X}}\phi^{-\mathbf{X}}w'u^S_{\xi\xi}d\xi\\
				&-3\tau\int_{\mathbb{R}}r^{-\mathbf{X}}(\phi^{-\mathbf{X}})^{2}\phi_{\xi}^{-\mathbf{X}}w'u_{\xi}^{S}d\xi -6\tau\int_{\mathbb{R}}(u^{S}+u^{R}-u_{m})r^{-\mathbf{X}}\phi^{-\mathbf{X}}\phi_{\xi}^{-\mathbf{X}}w'u_{\xi}^{S}d\xi\\
				&-3\tau\int_{\mathbb{R}}(u^{S}+u^{R}-u_{m})^{2}r^{-\mathbf{X}}\phi_{\xi}^{-\mathbf{X}}w'u_{\xi}^{S}d\xi-3\tau\int_{\mathbb{R}}(u_{\xi}^{S}+u_{\xi}^{R})r^{-\mathbf{X}}(\phi^{-\mathbf{X}})^{2}w'u_{\xi}^{S}d\xi\\
				&-6\tau\int_{\mathbb{R}}(u^{S}+u^{R}-u_{m})(u_{\xi}^{S}+u_{\xi}^{R})r^{-\mathbf{X}}\phi^{-\mathbf{X}}w'u_{\xi}^{S}d\xi-\frac{\tau}{2}\int_{\mathbb{R}}(r^{-\mathbf{X}})w''(u_{\xi}^{S})^{2}d\xi\\
				&-\frac{\tau}{2}\int_{\mathbb{R}}(r^{-\mathbf{X}})^{2}w'u_{\xi\xi}^Sd\xi+\frac{1}{2}\int_{\mathbb{R}}(\phi^{-\mathbf{X}})^{2}w'R_{1}^{\tau}u_{\xi}^{S}d\xi-\tau\sigma\int_{\mathbb{R}}u_{\xi\xi}^R\phi^{-\mathbf{X}}w'u_{\xi}^Sd\xi\\
				&- 6\tau\int_{\mathbb{R}}(u^{R})(u_{\xi}^{R})^{2}\phi^{-\mathbf{X}}w'u_{\xi}^{S}d\xi-3\tau\int_{\mathbb{R}}(u^{R})^{2}u_{\xi\xi}^{R}\phi^{-\mathbf{X}}w'u_{\xi}^{S}d\xi,
			\end{aligned}
		\end{equation}
		and
		\begin{equation}
			\begin{aligned}
				&R_{1}^{\tau}(u^{S})
				:=\frac{9\tau\sigma\left((u^{S})^{2}-u_{m}^{2}\right)^{2}\left(1+3\tau\sigma\left((u^{S})^{2}-u_{m}^{2}\right)\right)+6\tau\sigma u^{S}(u^{S}+2u_{m})(u_{m}-u^{S})}{\left(1+3\tau\sigma\left((u^{S})^{2}-u_{m}^{2}\right)\right)^{2}},
			\end{aligned}
		\end{equation}
	\end{lemma}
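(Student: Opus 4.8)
The identity \eqref{yds} is an energy identity, so its proof is a direct (if lengthy) computation carried out in the un-shifted frame, where the weight $w=w(u^{S}(\xi))$ does not depend on $t$. The plan is as follows. First I would record the equations satisfied by the shifted perturbation $(\phi^{-\mathbf{X}},r^{-\mathbf{X}})$: since $g^{-\mathbf{X}}_{t}=(g_{t})^{-\mathbf{X}}-\dot{\mathbf{X}}(t)(g_{\xi})^{-\mathbf{X}}$ for any $g$, translating \eqref{PFC} by $-\mathbf{X}(t)$ yields a system for $(\phi^{-\mathbf{X}},r^{-\mathbf{X}})$ whose background profiles are $u^{S}(\xi),q^{S}(\xi)$, obeying \eqref{tws}--\eqref{1S} and frozen in time, together with $U^{R}(t,\xi):=u^{R}(1+t,\xi+\sigma t)$, which solves $U^{R}_{t}-\sigma U^{R}_{\xi}+f(U^{R})_{\xi}=0$. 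One has $\phi^{-\mathbf{X}}=u^{-\mathbf{X}}-\tilde u$ with $\tilde u=u^{S}+U^{R}-u_{m}$, $\tilde u_{\xi}=u^{S}_{\xi}+U^{R}_{\xi}$, and similarly $r^{-\mathbf{X}}=q^{-\mathbf{X}}-\tilde q$.

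Next I would differentiate the functional $\int_{\mathbb{R}}w\,\eta(U^{-\mathbf{X}}|\tilde U)\,d\xi$, which equals $\tfrac{1}{2}\int_{\mathbb{R}}w[(\phi^{-\mathbf{X}})^{2}+\tau(r^{-\mathbf{X}})^{2}]\,d\xi$, in $t$, substitute the evolution equations for $(\phi^{-\mathbf{X}})_{t}$ and $(r^{-\mathbf{X}})_{t}$, and integrate by parts, using throughout that $w_{\xi}=w'(u^{S})\,u^{S}_{\xi}$ and that $w\in C^{2}$ with the bounds \eqref{WFP}. The damping term $r$ in \eqref{PFC}$_{2}$ produces the dissipation $\int_{\mathbb{R}}(r^{-\mathbf{X}})^{2}w\,d\xi$, while the two coupling contributions $\int_{\mathbb{R}}w\,\phi^{-\mathbf{X}}(r^{-\mathbf{X}})_{\xi}\,d\xi$ and $\int_{\mathbb{R}}w\,r^{-\mathbf{X}}\phi^{-\mathbf{X}}_{\xi}\,d\xi$ combine into the single antisymmetric remainder $-\int_{\mathbb{R}}w'\,u^{S}_{\xi}\,\phi^{-\mathbf{X}}r^{-\mathbf{X}}\,d\xi$. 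For the cubic flux $f(u)=u^{3}$ one has $f(\phi^{-\mathbf{X}}+\tilde u)-f(\tilde u)=3\tilde u^{2}\phi^{-\mathbf{X}}+3\tilde u(\phi^{-\mathbf{X}})^{2}+(\phi^{-\mathbf{X}})^{3}$; inserting this and integrating by parts is what generates all the quadratic, cubic and quartic terms in $\phi^{-\mathbf{X}}$ weighted by $u^{S}_{\xi}$, $U^{R}_{\xi}$, $w$, $w'$ and $w''$.

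The remaining work is organizational. I would sort the terms produced above according to whether they are sign-definite — using $u^{S}_{\xi}>0$ from Lemma \ref{lem0}, $u^{R}_{\xi}>0$ from Lemma \ref{lem1}, and $w>0$, $w'\leq 0$, $w''\geq 0$ from \eqref{WFP} — putting the favorably signed ones into $J^{good}(t)$ and the indefinite ones into $J^{bad}(t)$; collect every term carrying a factor $\dot{\mathbf{X}}(t)$ into $\dot{\mathbf{X}}(t)\mathbf{Y}(t)$; and place the error forcing $-F$ and its $\tau$-weighted analogues into $\mathbf{Z}(t)$. The antisymmetric cross term is treated by substituting \eqref{PFC}$_{2}$ (shifted) for one copy of $r^{-\mathbf{X}}$: the resulting $\tau(r^{-\mathbf{X}})_{t}$ piece is integrated by parts in $t$, producing the total time-derivative $\tau\frac{d}{dt}\int_{\mathbb{R}}r^{-\mathbf{X}}\phi^{-\mathbf{X}}w'u^{S}_{\xi}\,d\xi$ that sits inside $\mathbf{Z}(t)$, while the remaining pieces feed the $\tau$-terms of $\mathbf{Y}(t)$ and the remainder $R_{0}^{\tau}(t)$, and substituting $(\phi^{-\mathbf{X}})_{t}$ from \eqref{PFC}$_{1}$ into the leftover contributes still further terms to $R_{0}^{\tau}(t)$. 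Finally, $R_{1}^{\tau}(u^{S})$ appears when the shock-profile ODE \eqref{1S} (with $u_{+}=u_{m}$) is used to rewrite $u^{S}_{\xi\xi}$ and the $q^{S}$-related quantities arising in $J^{bad}$ and $R_{0}^{\tau}$ as explicit functions of $u^{S}$ times $u^{S}_{\xi}$, isolating the discrepancy between the Cattaneo profile and the Fourier ($\tau=0$) profile.

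The main obstacle is not conceptual but combinatorial, and is specifically caused by Cattaneo's law: because \eqref{PFC}$_{2}$ carries no parabolic smoothing, the $\tau$-terms do not cancel and the identity closes only after the antisymmetric cross term $-\int w'u^{S}_{\xi}\phi^{-\mathbf{X}}r^{-\mathbf{X}}$ has been converted into a total time derivative plus lower-order remainders. The bookkeeping must be done with care — in particular, tracking which $\tau$-contribution carries which power of $\delta_{S}$ through the decay bounds of Lemma \ref{lem0} — so that, downstream in Lemma \ref{lem2}, the smallness of $\tau$ in \eqref{tau} together with $\delta_{S}\leq\delta_{0}$ makes $\dot{\mathbf{X}}(t)\mathbf{Y}(t)$, $J^{bad}(t)$ and $R_{0}^{\tau}(t)$ absorbable. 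Everything else reduces to routine integration by parts.
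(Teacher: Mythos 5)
Your proposal is correct and follows essentially the same route as the paper: differentiate the weighted relative entropy, change to the $-\mathbf{X}$ frame where $w(u^S(\xi))$ is time-independent, substitute the shifted system \eqref{PFC} and integrate by parts via $w_\xi = w'(u^S)u^S_\xi$, and, crucially, handle the antisymmetric cross term $-\int_{\mathbb{R}} w' u^S_\xi \phi^{-\mathbf{X}} r^{-\mathbf{X}}\,d\xi$ by substituting \eqref{PFC}$_2$ for one copy of $r^{-\mathbf{X}}$, integrating the $\tau r_t^{-\mathbf{X}}$ piece by parts in time to produce the total derivative in $\mathbf{Z}(t)$, and feeding \eqref{PFC}$_1$ into the leftover $\phi_t^{-\mathbf{X}}$, with the profile ODE \eqref{1S} supplying $R_1^\tau$ from $u^S_{\xi\xi}$. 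The sorting of the resulting terms into $\mathbf{Y}$, $J^{good}$, $J^{bad}$, $\mathbf{Z}$, and $R_0^\tau$ that you describe matches the paper's computation.
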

	\begin{remark}
		Given the conditions $w'\leq0,\ w''\geq0,\ u_{\xi}^{S}>0,\ u_{\xi}^{R}>0,\ u^{R}>u_{m},$ thus $J^{\text{good}}(t)$ consists of good terms, while $J^{\text{bad}}(t)$ consists of bad terms and $\mathbf{Z}(t)$ can be controlled by the wave interaction estimates and the properties of approximate rarefaction wave.
	\end{remark}
	\begin{proof}
		By the direct calculation, we can get
		\begin{align*}
			\frac{d}{dt} \int_{\mathbb{R}} w^{\mathbf{X}} \eta(U|\tilde{U}^{\mathbf{X}}) d\xi=\dot{\mathbf{X}}(t)\int_{\mathbb{R}}w_{\xi}^{{\mathbf{X}}}\eta\left(U|\tilde{U}^{\mathbf{X}}\right)d\xi+\int_{\mathbb{R}}w^{\mathbf{X}}\left(\phi_{t}\phi+\tau r_{t}r\right)d\xi.
		\end{align*}	
		Changing variable $\xi\to\xi-\mathbf{X}(t), $ we have
		\small
		\begin{align*}
			&\frac{d}{dt} \int_{\mathbb{R}} w \eta(U^{-\mathbf{X}}|\tilde{U}) d\xi
			=\dot{\mathbf{X}}(t)\int_{\mathbb{R}}w_{\xi}\eta\left(U^{-\mathbf{X}}|\tilde{U}\right)d\xi+\int_{\mathbb{R}}w\left(\phi_{t}^{-\mathbf{X}}\phi^{-\mathbf{X}}+\tau r_{t}^{-\mathbf{X}}r^{-\mathbf{X}}\right)d\xi\\
			=&\dot{\mathbf{X}}(t)\int_{\mathbb{R}}\left(\frac{\left(\phi^{-\mathbf{X}}\right)^{2}}{2}+\frac{\left(r^{-\mathbf{X}}\right)^{2}}{2}\right)w'u_{\xi}^{S}d\xi-\int_{\mathbb{R}}F\phi^{-\mathbf{X}}wd\xi\\
			&+\int_{\mathbb{R}}\left[\sigma \phi_{\xi}^{-\mathbf{X}}-\left(f\left(\phi^{-\mathbf{X}}+\tilde{u}\right)-f\left(\tilde{u}\right)\right)_{\xi}-\dot{\mathbf{X}}(t)\left(u_{\xi}^{S}+u_{\xi}^{R}\right)+r_{\xi}^{-\mathbf{X}}\right]\phi^{-\mathbf{X}} wd\xi\\
			&+\int_{\mathbb{R}}\left[\tau\sigma r_{\xi}^{-\mathbf{X}}-r^{-\mathbf{X}}+\phi_{\xi}^{-\mathbf{X}}-\tau \dot{\mathbf{X}}(t) \tilde{q}_{\xi}-\tau \left(u^{R}\right)_{t\xi}\right]r^{-\mathbf{X}}wd\xi\\
			=&\dot{\mathbf{X}}(t)\int_{\mathbb{R}}\left(\frac{\left(\phi^{-\mathbf{X}}\right)^{2}}{2}+\frac{\left(r^{-\mathbf{X}}\right)^{2}}{2}\right)w'u_{\xi}^{S}d\xi-\int_{\mathbb{R}}F\phi^{-\mathbf{X}}wd\xi+\sigma\int_{\mathbb{R}}\phi_{\xi}^{-\mathbf{X}}\phi^{-\mathbf{X}}wd\xi\\
			&-\int_{\mathbb{R}}\left(f\left(\phi^{-\mathbf{X}}+\tilde{u}\right)-f\left(\tilde{u}\right)\right)_{\xi}\phi_{\xi}^{-\mathbf{X}}\phi^{-\mathbf{X}}wd\xi-\dot{{\mathbf{X}}}(t)\int_{\mathbb{R}}\left(u_{\xi}^{S}+u_{\xi}^{R}\right)\phi^{-\mathbf{X}}wd\xi+\tau\sigma\int_{\mathbb{R}}r_{\xi}^{-\mathbf{X}}r^{-\mathbf{X}}wd\xi\\
			&-\int_{\mathbb{R}}\left(r^{-\mathbf{X}}\right)^2wd\xi-\tau\dot{\mathbf{X}}(t)\int_{\mathbb{R}}r^{-\mathbf{X}}w\tilde{q}_{\xi}d\xi-\tau\int_{\mathbb{R}}r^{-\mathbf{X}}\left(u^{R}\right)_{t\xi}wd\xi+\int_{\mathbb{R}}\left(r^{-\mathbf{X}}\phi^{-\mathbf{X}}\right)_{\xi}wd\xi.
		\end{align*}
		Further calculations yield
		\begin{align*}
			\sigma\int_{\mathbb{R}}\phi_{\xi}^{-\mathbf{X}}\phi^{-\mathbf{X}}wd\xi=-\frac{\sigma}{2}\int_{\mathbb{R}}\left(\phi^{-\mathbf{X}}\right)^2w'u_{\xi}^{S}d\xi,
		\end{align*}
		and
		\begin{align*}
			\tau\sigma\int_{\mathbb{R}}r_{\xi}^{-\mathbf{X}}r^{-\mathbf{X}}wd\xi=-\frac{\tau\sigma}{2}\int_{\mathbb{R}}\left(r^{-\mathbf{X}}\right)^2w'u_{\xi}^{S}d\xi
		\end{align*}
		From \cite{ref16}, we can get
		\begin{equation} 
			\begin{aligned}
				&-\int_{\mathbb{R}} (f(\phi^{-\mathbf{X}} + \tilde{u}) - f(\tilde{u})) \phi^{-\mathbf{X}} w d\xi\\
				=&-3\int_{\mathbb{R}}(u^S + u^R - u_m)(u_{\xi}^S + u_{\xi}^R)(\phi^{-\mathbf{X}})^2wd\xi + \frac{3}{2}\int_{\mathbb{R}}(\phi^{-\mathbf{X}})^2(u^R - u_m)^2w'u_{\xi}^Sd\xi\\
				&+\frac{3}{2}\int_{\mathbb{R}}(\phi^{-\mathbf{X}})^2(u_{\xi}^S)^2w'u_{\xi}^Sd\xi + 3\int_{\mathbb{R}}(\phi^{-\mathbf{X}})^2(u^R - u_m)u^Sw'u_{\xi}^Sd\xi\\
				&-\int_{\mathbb{R}}(\phi^{-\mathbf{X}})^3w(u_{\xi}^S + u_{\xi}^R)d\xi + 2\int_{\mathbb{R}}(\phi^{-\mathbf{X}})^3(u^S + u^R - u_m)w'u_{\xi}^Sd\xi+\frac{3}{4}\int_{\mathbb{R}}(\phi^{-\mathbf{X}})^4w'u_{\xi}^Sd\xi. 
			\end{aligned}
		\end{equation}
		and
		\begin{align}\label{11}
			&\int_{\mathbb{R}}\left(r^{-\mathbf{X}}\phi^{-\mathbf{X}}\right)_{\xi}wd\xi=-\int_{\mathbb{R}}r^{-\mathbf{X}}\phi^{-\mathbf{X}}w'u_{\xi}^Sd\xi\nonumber\\
			=&-\int_{\mathbb{R}}\left(-\tau r_{t}^{-\mathbf{X}}+\tau\sigma r_{\xi}^{-\mathbf{X}}+\phi_{\xi}^{-\mathbf{X}}-\tau \dot{\mathbf{X}}(t) \tilde{q}_{\xi}-\tau \left(u^{R}\right)_{t\xi}\right)\phi^{-\mathbf{X}}w'u_{\xi}^Sd\xi\nonumber\\
			=&\tau\int_{\mathbb{R}}r_{t}^{-\mathbf{X}}\phi^{-\mathbf{X}}w'u_{\xi}^Sd\xi-\tau\sigma\int_{\mathbb{R}}r_{\xi}^{-\mathbf{X}}\phi^{-\mathbf{X}}w'u_{\xi}^Sd\xi-\int_{\mathbb{R}}\phi_{\xi}^{-\mathbf{X}}\phi^{-\mathbf{X}}w'u_{\xi}^Sd\xi\nonumber\\
			&+\tau\dot{X}(t)\int_{\mathbb{R}}\tilde{q}_{\xi}\phi^{-\mathbf{X}}w'u_{\xi}^Sd\xi+\tau\int_{\mathbb{R}}\left(u^{R}\right)_{t\xi}\phi^{-\mathbf{X}}w'u_{\xi}^Sd\xi.
		\end{align}
		Now we calculate the righthand side of $(\ref{11})$ term by term.
		\begin{align*}
			&\tau\int_{\mathbb{R}}r_{t}^{-\mathbf{X}}\phi^{-\mathbf{X}}w'u_{\xi}^Sd\xi=\tau\frac{d}{dt}\int_{\mathbb{R}}r^{-\mathbf{X}}\phi^{-\mathbf{X}}w'u_{\xi}^{S}d\xi-\tau\int_{\mathbb{R}}r^{-\mathbf{X}}\phi_{t}^{-\mathbf{X}}w'u_{\xi}^Sd\xi\\
			=&\tau\frac{d}{dt}\int_{\mathbb{R}}r^{-\mathbf{X}}\phi^{-\mathbf{X}}w'u_{\xi}^{S}d\xi\\
			&-\tau\int_{\mathbb{R}}r^{-\mathbf{X}}\left[\sigma\phi_{\xi}^{-\mathbf{X}}-\left(f(\phi^{-\mathbf{X}}+\tilde{u})-f(\tilde{u})\right)_{\xi}-\mathbf{X}(t)(u_{\xi}^S+u_{\xi}^R)+r_{\xi}^{-\mathbf{X}}-F\right]w'u_{\xi}^Sd\xi\\
			=&\tau\frac{d}{dt}\int_{\mathbb{R}}r^{-\mathbf{X}}\phi^{-\mathbf{X}}w'u_{\xi}^{S}d\xi-\tau\sigma\int_{\mathbb{R}}r^{-\mathbf{X}}\phi_{\xi}^{-\mathbf{X}}w'u_{\xi}^Sd\xi+\tau\int_{\mathbb{R}}r^{-\mathbf{X}}\left(f(\phi^{-\mathbf{X}}+\tilde{u})-f(\tilde{u})\right)_{\xi}w'u_{\xi}^Sd\xi\\
			&+\tau \dot{X}(t)\int_{\mathbb{R}}r^{-\mathbf{X}}\left(u_{\xi}^{S}+u_{\xi}^{R}\right)w'u_{\xi}^{S}d\xi-\tau\int_{\mathbb{R}}r^{-\mathbf{X}}r_{\xi}^{-\mathbf{X}}w'u_{\xi}^Sd\xi+\tau\int_{\mathbb{R}}r^{-\mathbf{X}}Fw'u_{\xi}^Sd\xi\\
			=&\tau\frac{d}{dt}\int_{\mathbb{R}}r^{-\mathbf{X}}\phi^{-\mathbf{X}}w'u_{\xi}^{S}d\xi-\tau\sigma\int_{\mathbb{R}}r^{-\mathbf{X}}\phi_{\xi}^{-\mathbf{X}}w'u_{\xi}^Sd\xi+3\tau\int_{\mathbb{R}}r^{-\mathbf{X}}(\phi^{-\mathbf{X}})^2\phi_{\xi}^{-\mathbf{X}}w'u_{\xi}^Sd\xi\\
			&+\tau\int_{\mathbb{R}}r^{-\mathbf{X}}\left[6\tilde{u}\phi^{-\mathbf{X}}\phi_{\xi}^{-\mathbf{X}}+3(\tilde{u})^{2}\phi_{\xi}^{-\mathbf{X}}+3\tilde{u}_{\xi}(\phi^{-\mathbf{X}})^{2}+6\tilde{u}\tilde{u}_{\xi}\phi^{-\mathbf{X}}\right]w'u_{\xi}^{S}d\xi\\
			&+\tau \dot{X}(t)\int_{\mathbb{R}}r^{-\mathbf{X}}\left(u_{\xi}^{S}+u_{\xi}^{R}\right)w'u_{\xi}^{S}d\xi+\tau\int_{\mathbb{R}}(r^{-\mathbf{X}})^{2}\left(w''(u_{\xi}^{S})^{2}+w'u_{\xi\xi}^{S}\right)d\xi+\tau\int_{\mathbb{R}}r^{-\mathbf{X}}Fw'u_{\xi}^Sd\xi\\
			=&\tau\frac{d}{dt}\int_{\mathbb{R}}r^{-\mathbf{X}}\phi^{-\mathbf{X}}w'u_{\xi}^{S}d\xi-\tau\sigma\int_{\mathbb{R}}r^{-\mathbf{X}}\phi_{\xi}^{-\mathbf{X}}w'u_{\xi}^Sd\xi+3\tau\int_{\mathbb{R}}r^{-\mathbf{X}}(\phi^{-\mathbf{X}})^2\phi_{\xi}^{-\mathbf{X}}w'u_{\xi}^Sd\xi\\
			&+6\tau\int_{\mathbb{R}}(u^S+u^R-u_{m})r^{-\mathbf{X}}\phi^{-\mathbf{X}}\phi_{\xi}^{-\mathbf{X}}w'u_{\xi}^Sd\xi+3\tau\int_{\mathbb{R}}(u^S+u^R-u_{m})^{2}r^{-\mathbf{X}}\phi_{\xi}^{-\mathbf{X}}w'u_{\xi}^Sd\xi\\
			&+3\tau\int_{\mathbb{R}}(u_{\xi}^S+u_{\xi}^R)r^{-\mathbf{X}}(\phi^{-\mathbf{X}})^2w'u_{\xi}^Sd\xi+6\tau\int_{\mathbb{R}}(u^S+u^R-u_{m})(u_{\xi}^S+u_{\xi}^R)r^{-\mathbf{X}}\phi^{-\mathbf{X}}w'u_{\xi}^Sd\xi\\
			&+\tau\int_{\mathbb{R}}r^{-\mathbf{X}}Fw'u_{\xi}^Sd\xi+\tau \dot{X}(t)\int_{\mathbb{R}}r^{-\mathbf{X}}\left(u_{\xi}^{S}+u_{\xi}^{R}\right)w'u_{\xi}^{S}d\xi+\tau\int_{\mathbb{R}}(r^{-\mathbf{X}})^{2}\left(w''(u_{\xi}^{S})^{2}+w'u_{\xi\xi}^{S}\right)d\xi,
		\end{align*}
		and
		\begin{align*}
			&-\tau\sigma\int_{\mathbb{R}}r^{-\mathbf{X}}\phi_{\xi}^{-\mathbf{X}}w'u_{\xi}^Sd\xi-\tau\sigma\int_{\mathbb{R}}r_{\xi}^{-\mathbf{X}}\phi^{-\mathbf{X}}w'u_{\xi}^Sd\xi
			=-\tau\sigma\int_{\mathbb{R}}(r^{-\mathbf{X}}\phi^{-\mathbf{X}})_{\xi}w'u_{\xi}^Sd\xi\\
			=&\tau\sigma\int_{\mathbb{R}}r^{-\mathbf{X}}\phi^{-\mathbf{X}}\left[w''(u_{\xi})^2+w'u_{\xi\xi}^S\right]d\xi,
		\end{align*}
		Notice that $u_{\xi}^S=\frac{(u^{S}-u_{-})(u^{S}-u_{m})^{2}}{1+3\tau \sigma\left[(u^{S})^{2}-u_{m}^{2}\right]},$ then 
		\begin{align*}
			&u_{\xi\xi}^S=-3(u_{m}^2-(u^S)^2)u_{\xi}^S\\
			&-\frac{9\tau\sigma\left((u^{S})^{2}-u_{m}^{2}\right)^{2}\left(1+3\tau\sigma\left((u^{S})^{2}-u_{m}^{2}\right)\right)+6\tau\sigma u^{S}(u^{S}+2u_{m})(u_{m}-u^{S})}{\left(1+3\tau\sigma\left((u^{S})^{2}-u_{m}^{2}\right)\right)^{2}}u_{\xi}^S\\
			&=:-3(u_{m}^2-(u^S)^2)u_{\xi}^S-R_{1}^{\tau}(u^{S})u_{\xi}^S.
		\end{align*}
		Thus we have
		\begin{align*}
			&-\int_{\mathbb{R}}\phi_{\xi}^{-\mathbf{X}}\phi^{-\mathbf{X}}w'u_{\xi}^Sd\xi
			=\frac{1}{2}\int_{\mathbb{R}}(\phi^{-\mathbf{X}})^2\left[w''(u_{\xi})^2+w'u_{\xi\xi}^S\right]d\xi\\
			=&\frac{1}{2}\int_{\mathbb{R}}(\phi^{-\mathbf{X}})^2w''(u_{\xi})^2d\xi-\frac{3}{2}\int_{\mathbb{R}}(\phi^{-\mathbf{X}})^2(u_{m}^2-(u^S)^2)w'u_{\xi}^Sd\xi-\frac{1}{2}\int_{\mathbb{R}}(\phi^{-\mathbf{X}})^2R_{1}^{\tau}(u^{S})w'u_{\xi}^Sd\xi,
		\end{align*}
		and since $(u^R)_{t}=f(u^{R})_{x}$, we get $(u^R)_{t}=\sigma u_{\xi}^R+f(u^{R})_{\xi}.$
		\begin{align*}
			&\tau\int_{\mathbb{R}}\left(u^{R}\right)_{t\xi}\phi^{-\mathbf{X}}w'u_{\xi}^Sd\xi
			=\tau\sigma\int_{\mathbb{R}}u_{\xi\xi}^R\phi^{-\mathbf{X}}w'u_{\xi}^Sd\xi+\tau\int_{\mathbb{R}}f(u^{R})_{\xi}\phi^{-\mathbf{X}}w'u_{\xi}^Sd\xi\\
			=&\tau\sigma\int_{\mathbb{R}}u_{\xi\xi}^R\phi^{-\mathbf{X}}w'u_{\xi}^Sd\xi+\tau\int_{\mathbb{R}}\left[6u^R(u_{\xi}^R)^2+3(u^R)^2u_{\xi\xi}^R\right]\phi^{-\mathbf{X}}w'u_{\xi}^Sd\xi,
		\end{align*}
		Substitute them into $(\ref{11})$, then
		\begin{equation}\label{hhx}
			\begin{aligned}
				&-\int_{\mathbb{R}}\left(r^{-\mathbf{X}}\phi^{-\mathbf{X}}\right)_{\xi}wd\xi\\
				:=&-\tau\frac{d}{dt}\int_{\mathbb{R}}r^{-\mathbf{X}}\phi^{-\mathbf{X}}w'u_{\xi}^{S}d\xi - \tau\sigma\int_{\mathbb{R}}r^{-\mathbf{X}}\phi^{-\mathbf{X}}w''\left(u_{\xi}^{S}\right)^{2}d\xi-\tau\sigma\int_{\mathbb{R}}r^{-\mathbf{X}}\phi^{-\mathbf{X}}w'u_{\xi\xi}d\xi\\
				&-3\tau\int_{\mathbb{R}}r^{-\mathbf{X}}(\phi^{-\mathbf{X}})^{2}\phi_{\xi}^{-\mathbf{X}}w'u_{\xi}^{S}d\xi-6\tau\int_{\mathbb{R}}(u^{S}+u^{R}-u_{m})r^{-\mathbf{X}}\phi^{-\mathbf{X}}\phi_{\xi}^{-\mathbf{X}}w'u_{\xi}^{S}d\xi\\
				&-3\tau\int_{\mathbb{R}}(u^{S}+u^{R}-u_{m})^{2}r^{-\mathbf{X}}\phi_{\xi}^{-\mathbf{X}}w'u_{\xi}^{S}d\xi-3\tau\int_{\mathbb{R}}(u_{\xi}^{S}+u_{\xi}^{R})r^{-\mathbf{X}}(\phi^{-\mathbf{X}})^{2}w'u_{\xi}^{S}d\xi\\
				&-6\tau\int_{\mathbb{R}}(u^{S}+u^{R}-u_{m})(u_{\xi}^{S}+u_{\xi}^{R})r^{-\mathbf{X}}\phi^{-\mathbf{X}}w'u_{\xi}^{S}d\xi-\tau\dot{\mathbf{X}}(t)\int_{\mathbb{R}}r^{-\mathbf{X}}(u_{\xi}^{S}+u_{\xi}^{R})w'u_{\xi}^{S}d\xi\\ &-\frac{\tau}{2}\int_{\mathbb{R}}(r^{-\mathbf{X}})w''(u_{\xi}^{S})^{2}d\xi-\frac{\tau}{2}\int_{\mathbb{R}}(r^{-\mathbf{X}})^{2}w'u_{\xi\xi}^Sd\xi -\tau\int_{\mathbb{R}}r^{-\mathbf{X}}F\,w'u_{\xi}^{S}d\xi-\frac{1}{2}\int_{\mathbb{R}}(\phi^{-\mathbf{X}})^{2}w''(u_{\xi}^S)^{2}d\xi\\
				&-\frac{3}{2}\int_{\mathbb{R}}(\phi^{-\mathbf{X}})^{2}w'(u^{S})^{2}u_{\xi}^{S}d\xi+\frac{\sigma}{2}\int_{\mathbb{R}}(\phi^{-\mathbf{X}})^{2}w'u_{\xi}^{S}d\xi+\frac{1}{2}\int_{\mathbb{R}}(\phi^{-\mathbf{X}})^{2}w'R_{1}^{\tau}u_{\xi}^{S}d\xi\\
				&-\tau\dot{\mathbf{X}}(t)\int_{\mathbb{R}}q_{\xi}^{S}\phi^{-\mathbf{X}}w'u_{\xi}^{S}d\xi-\tau\dot{{\mathbf{X}}}(t)\int_{\mathbb{R}}u_{\xi\xi}^{R}\phi^{-\mathbf{X}}w'u_{\xi}^{S}d\xi -\tau\sigma\int_{\mathbb{R}}u_{\xi\xi}^R\phi^{-\mathbf{X}}w'u_{\xi}^Sd\xi\\
				&- 6\tau\int_{\mathbb{R}}(u^{R})(u_{\xi}^{R})^{2}\phi^{-\mathbf{X}}w'u_{\xi}^{S}d\xi-3\tau\int_{\mathbb{R}}(u^{R})^{2}u_{\xi\xi}^{R}\phi^{-\mathbf{X}}w'u_{\xi}^{S}d\xi,
			\end{aligned}
		\end{equation}
		and
		\begin{align*}
			&-\tau\int_{\mathbb{R}}r^{-\mathbf{X}}\left(u^{R}\right)_{t\xi}wd\xi
			=-\tau\int_{\mathbb{R}}r^{-\mathbf{X}}\left(\sigma u_{\xi\xi}^{R}+f(u^R)_{\xi}\right)wd\xi\nonumber\\
			=&-\tau\sigma\int_{\mathbb{R}}r^{-\mathbf{X}}u_{\xi\xi}^Rwd\xi-\tau\int_{\mathbb{R}}r^{-\mathbf{X}}\left[6u^R(u_{\xi}^R)^2+3(u^R)^2u_{\xi\xi}^R\right]wd\xi\nonumber\\
			=&-\tau\sigma\int_{\mathbb{R}}r^{-\mathbf{X}}u_{\xi\xi}^Rwd\xi-6\tau\int_{\mathbb{R}}r^{-\mathbf{X}}u^R(u_{\xi}^R)^2wd\xi-3\tau\int_{\mathbb{R}}r^{-\mathbf{X}}(u^R)^2u_{\xi\xi}^Rwd\xi\nonumber.
		\end{align*}
		So, we finish the proof.
	\end{proof}
	Then we rewrite
		\begin{align}\label{lem3}
			\dot{\mathbf{X}}(t)\mathbf{Y}(t) + J^{good}(t) + J^{bad}(t)=\mathbf{G}^S(t)+\mathbf{G}^R(t)+\mathbf{G}^{SR}(t)+\mathbf{N}(t)+\mathbf{J}(t),
		\end{align}
		where 
		\begin{align}
			\mathbf{G}^{S}(t) :=&\dot{\mathbf{X}}(t) \int_{\mathbb{R}} \phi^{-\mathbf{X}} w u_{\xi}^{S} d\xi+ \int_{\mathbb{R}}  (r^{-\mathbf{X}})^2w d\xi - \frac{3}{4} \int_{\mathbb{R}} (\phi^{-\mathbf{X}})^4 w' u_{\xi}^{S} d\xi\nonumber\\
			&+ \int_{\mathbb{R}} (\phi^{-\mathbf{X}})^2 u_{\xi}^{S} \left[\sigma w' - 3(u^{S})^2 w' + 3u^{S} w - \frac{w''}{2} u_{\xi}^{S}\right] d\xi
			,\label{GS}\\
			\mathbf{G}^{R}(t):=&3\int_{\mathbb{R}}(\phi^{-\mathbf{X}})^{2}wu^{R}u_{\xi}^{R}d\xi, \label{GR}\\
			\mathbf{G}^{SR}(t) := &3 \int_{\mathbb{R}} (\phi^{-X})^2 w(u^R - u_m) u_{\xi}^S d\xi - \frac{3}{2} \int_{\mathbb{R}} (\phi^{-X})^2 (u^R - u_m)^2 w' u_{\xi}^S d\xi,\label{GSR}\\
			\mathbf{N}(t):=&\dot{\mathbf{X}}(t) \int_{\mathbb{R}} \phi^{-\mathbf{X}} w u_{\xi}^{R} d\xi-\frac{\dot{\mathbf{X}}(t)}{2} \int_{\mathbb{R}}\left(\phi^{-\mathbf{X}}\right)^{2} w^{\prime} u_{\xi}^{S} d \xi-\tau\dot{\mathbf{X}}(t)\int_{\mathbb{R}}\phi^{-\mathbf{X}}q_{\xi}^{S}w'u_{\xi}^{S}d\xi\nonumber\\
			&-\tau\dot{\mathbf{X}}(t) \int_{\mathbb{R}}r^{-\mathbf{X}}\left(u_{\xi}^{S}+u_{\xi}^{R}\right)w'u_{\xi}^{S}d\xi+\tau\dot{\mathbf{X}}(t)\int_{\mathbb{R}}r^{-\mathbf{X}}w\tilde{q}_{\xi}d\xi\nonumber\\
			&-\tau\dot{\mathbf{X}}(t)\int_{\mathbb{R}}\phi^{-\mathbf{X}}u_{\xi\xi}^{R}w'u_{\xi}^{S}d\xi-\frac{\tau\dot{\mathbf{X}}(t)}{2}\int_{\mathbb{R}}\left(r^{-\mathbf{X}}\right)^{2} w^{\prime} u_{\xi}^{S} d \xi+R_0^{\tau}(t)\nonumber\\
			&-3\int_{\mathbb{R}}(\phi^{-\mathbf{X}})^2(u^R-u_{m})u^Sw'u_{\xi}^Sd\xi+\int_{\mathbb{R}}(\phi^{-\mathbf{X}})^3w(u_{\xi}^S+u_{\xi}^R)d\xi\nonumber\\
			&-2\int_{\mathbb{R}}(\phi^{-\mathbf{X}})^3(u^S+u^R-u_{m})w'u_{\xi}^Sd\xi+\frac{\tau\sigma}{2}\int_{\mathbb{R}}(r^{-\mathbf{X}})^2w'u_{\xi}^Sd\xi\nonumber\\
			&+\tau\sigma\int_{\mathbb{R}}r^{-\mathbf{X}}u_{\xi\xi}^Rwd\xi+6\tau\int_{\mathbb{R}}r^{-\mathbf{X}}u^R(u_{\xi}^R)^2wd\xi+3\tau\int_{\mathbb{R}}r^{-\mathbf{X}}(u^R)^2u_{\xi\xi}^Rwd\xi,
		\end{align}
		and
		\begin{align}\label{j}
			\mathbf{J}(t):=&3 \int_{\mathbb{R}} (\phi^{-\mathbf{X}})^2 (u^S - u_m) w u_\xi^R d\xi.
		\end{align}
			It is clear that $\mathbf{G}^R(t)$ and $\mathbf{G}^{SR}(t)$ are good terms. The fact that $\mathbf{G}^S(t)$ are good terms follows from Lemma $\ref{lemgs}$, provided we choose the weight function $w$ defined in $(\ref{WF})$ together with the time-dependent shift $\mathbf{X}(t)$ given in $(\ref{shift})$. On the other hand, $\mathbf{N}(t)$ have the bad term, which can be controlled using Lemma $\ref{lem6}$ under the assumption of sufficiently small initial perturbations and rarefaction wave strength. Meanwhile, both $\mathbf{N}(t)$ and $\mathbf{Z}(t)$ can be handled via the wave interaction estimates established in Lemma $\ref{WIE}$ along with the decay properties of the approximate rarefaction wave.
	To prove Lemma $\ref{lem2}$, we begin by estimating the terms $\mathbf{G}^S(t),\mathbf{N}(t),\mathbf{J}(t)$ and $\mathbf{Z}(t)$ as follow.\par
	First, we obtain the estimate for $\mathbf{G}^S(t)$. The proof follows an argument similar to that in \cite{ref16}.
	\begin{lemma}\label{lemgs}
		The following estimate holds for $\mathbf{G}^{S}(t)$:
		\begin{equation} 
			\begin{aligned}
				\mathbf{G}^{S}(t) \geq &\int_{\mathbb{R}}(r^{-\mathbf{X}})^2wd\xi-\left(\frac{5}{6}+\frac{9\tau\sigma u_m^2}{8}\right)\int_{\mathbb{R}} (\phi_{\xi}^{-\mathbf{X}})^{2}w d\xi \\
				&+ \frac{4}{5} u_{m}^{3} \int_{\mathbb{R}} (\phi^{-\mathbf{X}})^{2} u_{\xi}^{S} d\xi+ \frac{25}{64} u_{m}^{2} |\dot{\mathbf{X}}(t)|^{2} + \frac{3}{4} \int_{\mathbb{R}} (\phi^{-\mathbf{x}})^{4} |w'| u_{\xi}^{S} d\xi. 
			\end{aligned}
		\end{equation}
	\end{lemma}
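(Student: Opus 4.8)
The plan is to expand the expression for $\mathbf{G}^S(t)$ in \eqref{GS} and to estimate each of its four constituent pieces separately, exploiting the precise structure of the weight function $w$ on the three intervals $(-\infty,\xi_1)$, $[\xi_1,\xi_2)$, $[\xi_2,+\infty)$ defined in \eqref{WF}, together with the ODE $\dot{\mathbf{X}}(t)$ in \eqref{shift} that was tailored to cancel the indefinite linear term. First I would keep the term $\int_{\mathbb{R}}(r^{-\mathbf{X}})^2 w\,d\xi$ as it stands and keep $-\frac34\int_{\mathbb{R}}(\phi^{-\mathbf{X}})^4 w' u_\xi^S\,d\xi = \frac34\int_{\mathbb{R}}(\phi^{-\mathbf{X}})^4|w'|u_\xi^S\,d\xi$ (using $w'\le0$, $u_\xi^S>0$ from Lemma \ref{lem0} and \eqref{WFP}), so these two pieces already appear verbatim in the claimed lower bound. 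The real work is the quadratic-in-$\phi$ term and the cross term involving $\dot{\mathbf{X}}(t)$.

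For the quadratic term I would analyze the sign of the bracket
$$
\Phi(u^S) := \sigma w' - 3(u^S)^2 w' + 3u^S w - \tfrac12 w'' u_\xi^S
$$
pointwise in $u^S$. On each of the three regimes one substitutes the explicit formulas for $w$, $w'$, $w''$ from \eqref{WF}–\eqref{WFP} and the bound \eqref{1S}/\eqref{13} for $u_\xi^S$ (note $\sigma = 3u_m^2$, $u_- = -2u_m$). The crucial claim, as in \cite{ref16}, is that after discarding the manifestly nonnegative contributions one retains at least $\frac45 u_m^3 \int_{\mathbb{R}}(\phi^{-\mathbf{X}})^2 u_\xi^S\,d\xi$; the coefficient $\frac45 u_m^3$ comes from evaluating $3u^S w$ near $u^S = u_m$ where $w = \frac{15}{8}u_m^2$, giving $3u_m\cdot\frac{15}{8}u_m^2 = \frac{45}{8}u_m^3$, of which a fixed fraction survives after subtracting the possibly-negative pieces $\sigma w' - 3(u^S)^2 w'$ (which in fact have a favorable sign since $w'\le 0$ and $(u^S)^2$ can be compared with $u_m^2$) and the curvature term $-\frac12 w'' u_\xi^S$, which is small because $w''$ is bounded by $\frac{15}{2}$ and $u_\xi^S = O(\delta_S^3)$ pointwise where it is not exponentially small. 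The term $-\frac12 w'' u_\xi^S$ contributes the factor $-\frac{9\tau\sigma u_m^2}{8}$ once one also carries along the contribution of $w'' (u_\xi^S)^2$ that gets paired against $\phi_\xi^{-\mathbf{X}}$ after an integration by parts, which is where the $\tau$-dependent loss enters.

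For the cross term $\dot{\mathbf{X}}(t)\int_{\mathbb{R}}\phi^{-\mathbf{X}} w u_\xi^S\,d\xi$ I would use the defining ODE \eqref{shift}: since $\dot{\mathbf{X}}(t) = \frac{32}{25u_m^2}\int_{\mathbb{R}}\phi w^{\mathbf{X}}(u^S_\xi)^{\mathbf{X}}\,d\xi$, after the change of variables $\xi\mapsto\xi-\mathbf{X}(t)$ the integral $\int_{\mathbb{R}}\phi^{-\mathbf{X}} w u_\xi^S\,d\xi$ equals exactly $\frac{25u_m^2}{32}\dot{\mathbf{X}}(t)$, so the cross term equals $\frac{25u_m^2}{32}|\dot{\mathbf{X}}(t)|^2$. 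I would then concede half of this, i.e. $\frac{25u_m^2}{64}|\dot{\mathbf{X}}(t)|^2$, to the stated lower bound and use the remaining half as slack; alternatively, if the sign bookkeeping of the quadratic term forces borrowing, one keeps the precise coefficient $\frac{25}{64}$ as written. The only place a first-order term $\phi_\xi^{-\mathbf{X}}$ appears is through $\sigma\int\phi_\xi^{-\mathbf{X}}\phi^{-\mathbf{X}} w\,d\xi = -\frac{\sigma}{2}\int(\phi^{-\mathbf{X}})^2 w' u_\xi^S\,d\xi$ (already absorbed into $\Phi$) and through the manipulation $-\int\phi_\xi^{-\mathbf{X}}\phi^{-\mathbf{X}} w' u_\xi^S\,d\xi$ that generated the $w''(u_\xi^S)^2$ and $w'u^S_{\xi\xi}$ terms in Lemma \ref{lem2.5}; isolating these and bounding $\int (\phi^{-\mathbf{X}})^2 w''(u_\xi^S)^2\,d\xi$ versus $\int(\phi_\xi^{-\mathbf{X}})^2 w\,d\xi$ by a weighted Cauchy–Schwarz/Poincaré-type inequality (valid because $u_\xi^S$ and $w$ are comparable up to the factor $\delta_S^2$ on the relevant support) yields the coefficient $\frac56 + \frac{9\tau\sigma u_m^2}{8}$ on the $\int(\phi_\xi^{-\mathbf{X}})^2 w\,d\xi$ term. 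The main obstacle is the delicate pointwise sign analysis of $\Phi(u^S)$ across the gluing points $\xi_1,\xi_2$ — one must verify that the piecewise-defined $w$ was chosen so that $\Phi\ge c\,u_m^3$ wherever $u_\xi^S$ is not exponentially small, and that the $\tau$-terms (which are absent in \cite{ref16}) only degrade the constants by the explicit amounts claimed, which is exactly why the hypothesis \eqref{tau} on $\tau$ is needed.
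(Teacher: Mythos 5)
You correctly dispose of the easy pieces of $\mathbf{G}^S(t)$: the $\int (r^{-\mathbf{X}})^2 w$ term is kept as is, the quartic term becomes $\tfrac34\int(\phi^{-\mathbf{X}})^4|w'|u_\xi^S$ using $w'\le 0$ and $u_\xi^S>0$, and the cross term evaluates \emph{exactly} to $\tfrac{25u_m^2}{32}|\dot{\mathbf{X}}(t)|^2$ by the shift ODE \eqref{shift}. (Note the paper gives no proof of this lemma at all; it simply asserts the argument follows \cite{ref16}, so the task is really to assess whether your sketch would reproduce that $a$-contraction argument.) Where your proposal breaks down is in the treatment of the quadratic-in-$\phi$ term, which is the entire content of the lemma.

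Your central claim is that ``after discarding the manifestly nonnegative contributions'' one retains $\tfrac45 u_m^3\int(\phi^{-\mathbf{X}})^2 u_\xi^S\,d\xi$. This is not what happens. Plug in the explicit formulas on the region $\xi<\xi_1$, i.e.\ $u^S\in(u_-,0)=(-2u_m,0)$: there $w=\tfrac52 u_m(u_m-u^S)$, $w'=-\tfrac52 u_m$, $w''=0$, and $\sigma=3u_m^2$, so a short computation gives
\begin{equation*}
\Phi(u^S)=\sigma w'-3(u^S)^2 w'+3u^S w-\tfrac12 w''u_\xi^S
=-\tfrac{15}{2}u_m^2\,(u_m-u^S)\,\le\,-\tfrac{15}{2}u_m^3<0,
\end{equation*}
and this occurs precisely where $u_\xi^S$ is \emph{not} small. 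So the quadratic term has a large \emph{negative} contribution that cannot be ``discarded''; it is strictly larger in magnitude than the claimed $\tfrac45u_m^3$ gain. Relatedly, your parenthetical claim that $\sigma w'-3(u^S)^2w'=3(u_m^2-(u^S)^2)w'$ has a ``favorable sign'' is false: with $w'\le 0$ this piece is nonpositive when $|u^S|<u_m$ and nonnegative only when $|u^S|>u_m$, so it changes sign across the shock. Discarding signs simply does not close the estimate.

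The mechanism that actually absorbs the negative region of $\Phi$ is the sharp Poincar\'e-type inequality at the heart of the $a$-contraction method (as in \cite{ref16,ref1}): because $\dot{\mathbf{X}}(t)$ is chosen so that $\int\phi^{-\mathbf{X}}wu_\xi^S\,d\xi=\tfrac{25u_m^2}{32}\dot{\mathbf{X}}(t)$, the perturbation $\phi^{-\mathbf{X}}$ has a controlled weighted average along the shock, and a weighted Poincar\'e inequality (after a change of variable $y=y(u^S)$ normalizing the shock profile) bounds $\int(\phi^{-\mathbf{X}})^2 u_\xi^S|\Phi|\,d\xi$ by the sum of $|\dot{\mathbf{X}}(t)|^2$, $\int(\phi_\xi^{-\mathbf{X}})^2 w\,d\xi$, and lower-order good terms. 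This is the true origin of the coefficient $\tfrac56+\tfrac{9\tau\sigma u_m^2}{8}$ in front of $\int(\phi_\xi^{-\mathbf{X}})^2 w\,d\xi$ and of the loss of half the $|\dot{\mathbf{X}}(t)|^2$ term (the retained half, $\tfrac{25u_m^2}{64}$, is what remains after the Poincar\'e absorption); the $\tau$-dependent piece arises because the profile $u_\xi^S$ in \eqref{1S} carries a $\tau$-dependent denominator which perturbs the Poincar\'e constant relative to the $\tau=0$ case of \cite{ref16}. Attributing the $\phi_\xi$ term to a Cauchy--Schwarz bound on $\int(\phi^{-\mathbf{X}})^2w''(u_\xi^S)^2\,d\xi$ misses the point: that piece is $O(\delta_S^6/u_m^2)$ relative to $w$ and is far too small to generate the $\tfrac56$; the dominant difficulty is the $\Phi<0$ region, which your sketch never addresses.
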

	Next, the estimates of $\mathbf{N}(t)$ is given by the following lemma.
	\begin{lemma}\label{lem6}
		There exists a constant $C>0$ such that
		\begin{align*}
			\left|\mathbf{N}(t)\right|\leq&\left(\frac{u_m^2}{64}+\frac{5\tau u_m^2}{64}\right)|\dot{\mathbf{X}}(t)|^2+\left(C\delta_R+\|\phi^{-\mathbf{X}}\|_{L^{\infty}(\mathbb{R})}\right)\int_{\mathbb{R}}(\phi^{-\mathbf{X}})^2wu_{\xi}^Rd\xi\\
			&+\tau\left(M_1+C\|\phi^{-\mathbf{X}}\|_{L^{\infty}(\mathbb{R})}\right)\int_{\mathbb{R}}(\phi^{-\mathbf{X}})^2u_{\xi}^Sd\xi\\
			&+\tau\left(M_2+C\delta_R^{\frac{1}{3}}+C\|\phi^{-\mathbf{X}}\|_{L^{\infty}(\mathbb{R})}\right)\int_{\mathbb{R}}(r^{-\mathbf{X}})^2wd\xi\\
			&+\tau\left(Cu_m^4+C\delta_R+C\|\phi^{-\mathbf{X}}\|_{L^{\infty}(\mathbb{R})}\right)\int_{\mathbb{R}}(\phi_{\xi}^{-\mathbf{X}})^2wd\xi+C\delta_R^{\frac{1}{3}}(1+t)^{-\frac{4}{3}},
		\end{align*}
		where 
		\begin{align*}
			&M_1=\frac{5\sigma}{4}u_m+126u_m^3+C u_m^6+C u_m^7+Cu_m^{11},\\
			&M_2=Cu_m^4+C u_m^5+Cu_m^8.
		\end{align*}
	\end{lemma}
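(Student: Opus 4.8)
The plan is to bound $\mathbf{N}(t)$ by estimating each of its constituent terms, including those contained in $R_0^{\tau}(t)$, individually, using only the structural facts already available: the two-sided bound \eqref{13} on $1+3\tau\sigma((u^S)^2-u_m^2)$, the weight bounds \eqref{WFP}, the shock-profile decay of Lemma \ref{lem0}, the rarefaction estimates of Lemma \ref{lem1}, the shift equation \eqref{shift}, the wave-interaction estimates of Lemma \ref{WIE}, and the a priori smallness $\|\phi^{-\mathbf{X}}\|_{L^{\infty}(\mathbb{R})}\le C\varepsilon_2$. I would split the terms of $\mathbf{N}(t)$ into four groups. The first consists of the terms carrying an explicit factor $\dot{\mathbf{X}}(t)$: the two $\tau$-free ones, the five proportional to $\tau\dot{\mathbf{X}}(t)$, and the $\dot{\mathbf{X}}$-terms inside $R_0^{\tau}$. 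To each I apply Young's inequality, peeling off an $|\dot{\mathbf{X}}(t)|^2$ piece with coefficient $\tfrac{u_m^2}{64}$ (respectively $\tfrac{\tau u_m^2}{64}$), so that the total does not exceed $\tfrac{u_m^2}{64}+\tfrac{5\tau u_m^2}{64}$. The companion factor is then controlled by Cauchy--Schwarz together with $|w'|\le\tfrac52 u_m$ and the bounds $|q_{\xi}^S|\le C\delta_S^2|u_{\xi}^S|$, $|u_{\xi\xi}^S|\le C\delta_S^2|u_{\xi}^S|$ from Lemma \ref{lem0}, as well as $\|\partial_{\xi}^k u^R\|_{L^p}\le C\min\{\delta_R,t^{-1}\}$ and $\|u_{\xi}^R\|_{L^1}\le C\delta_R$ from Lemma \ref{lem1}. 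When the companion factor still involves $\phi^{-\mathbf{X}}$, I use $(\int_{\mathbb{R}}\phi^{-\mathbf{X}}w u_{\xi}^R\,d\xi)^2\le Cu_m^2\|u_{\xi}^R\|_{L^1}\int_{\mathbb{R}}(\phi^{-\mathbf{X}})^2 w u_{\xi}^R\,d\xi$ to produce the $C\delta_R\int_{\mathbb{R}}(\phi^{-\mathbf{X}})^2 w u_{\xi}^R$ contribution; when it involves only the wave profiles (as in the $\tilde{q}_{\xi}$- and $u_{\xi\xi}^R$-terms) I absorb it into $C\delta_R^{1/3}(1+t)^{-4/3}$ by interpolating $\min\{\delta_R^{a},t^{-b}\}\le\delta_R^{a\theta}t^{-b(1-\theta)}$ with $\theta$ chosen to reach that rate.

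Next I would treat the cubic and quartic terms in $\phi^{-\mathbf{X}}$ --- namely $\int_{\mathbb{R}}(\phi^{-\mathbf{X}})^3 w(u_{\xi}^S+u_{\xi}^R)$, $-2\int_{\mathbb{R}}(\phi^{-\mathbf{X}})^3(u^S+u^R-u_m)w'u_{\xi}^S$, and the cubic terms inside $R_0^{\tau}$ such as $-3\tau\int_{\mathbb{R}}r^{-\mathbf{X}}(\phi^{-\mathbf{X}})^2\phi_{\xi}^{-\mathbf{X}}w'u_{\xi}^S$ --- by extracting a factor $\|\phi^{-\mathbf{X}}\|_{L^{\infty}(\mathbb{R})}$ and leaving a weighted quadratic integral $\int_{\mathbb{R}}(\phi^{-\mathbf{X}})^2 w u_{\xi}^S$ or $\int_{\mathbb{R}}(\phi^{-\mathbf{X}})^2 w u_{\xi}^R$; the $R_0^{\tau}$-terms containing $\phi_{\xi}^{-\mathbf{X}}$ additionally require Young's inequality to split off $\tau\int_{\mathbb{R}}(\phi_{\xi}^{-\mathbf{X}})^2 w$, which is the source of the coefficient $Cu_m^4+C\delta_R+C\|\phi^{-\mathbf{X}}\|_{L^{\infty}}$ on that term. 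The quadratic-in-$\phi^{-\mathbf{X}}$ terms with pure-profile coefficients, chiefly $-3\int_{\mathbb{R}}(\phi^{-\mathbf{X}})^2(u^R-u_m)u^S w'u_{\xi}^S$ and $\tfrac12\int_{\mathbb{R}}(\phi^{-\mathbf{X}})^2 w'R_1^{\tau}u_{\xi}^S$, I estimate via $|u^R-u_m|\le C\delta_R$ from Lemma \ref{lem1}(1), the wave-interaction bounds of Lemma \ref{WIE} controlling $\int_{\mathbb{R}}|u^R-u_m|u_{\xi}^S$, and $|R_1^{\tau}(u^S)|\le C\tau$, the last following from its explicit formula together with the lower bound $\tfrac23<|1+3\tau\sigma((u^S)^2-u_m^2)|$ in \eqref{13}; here one must keep the coefficients linear in $\tau$ exactly, which is precisely what produces $M_1=\tfrac{5\sigma}{4}u_m+126u_m^3+Cu_m^6+Cu_m^7+Cu_m^{11}$ in front of $\tau\int_{\mathbb{R}}(\phi^{-\mathbf{X}})^2 u_{\xi}^S$.

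Finally I would deal with the terms quadratic in $r^{-\mathbf{X}}$: $\tfrac{\tau\sigma}{2}\int_{\mathbb{R}}(r^{-\mathbf{X}})^2 w'u_{\xi}^S$, $-\tfrac{\tau}{2}\int_{\mathbb{R}}(r^{-\mathbf{X}})^2 w''(u_{\xi}^S)^2$ and $-\tfrac{\tau}{2}\int_{\mathbb{R}}(r^{-\mathbf{X}})^2 w'u_{\xi\xi}^S$ are directly majorised by $C\tau\delta_S^2\int_{\mathbb{R}}(r^{-\mathbf{X}})^2 w$ using \eqref{WFP} and Lemma \ref{lem0}, while the purely-$r$ terms $\tau\sigma\int_{\mathbb{R}}r^{-\mathbf{X}}u_{\xi\xi}^R w$, $6\tau\int_{\mathbb{R}}r^{-\mathbf{X}}u^R(u_{\xi}^R)^2 w$, $3\tau\int_{\mathbb{R}}r^{-\mathbf{X}}(u^R)^2 u_{\xi\xi}^R w$ are split by Young's inequality into a small multiple of $\tau\int_{\mathbb{R}}(r^{-\mathbf{X}})^2 w$ plus $\tau$ times a pure rarefaction integral controlled by $C\delta_R^{1/3}(1+t)^{-4/3}$ via Lemma \ref{lem1}(2). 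Collecting all the contributions landing on $\tau\int_{\mathbb{R}}(r^{-\mathbf{X}})^2 w$ gives $M_2=Cu_m^4+Cu_m^5+Cu_m^8$ plus the additive $C\delta_R^{1/3}+C\|\phi^{-\mathbf{X}}\|_{L^{\infty}}$, and summing the four groups yields the asserted inequality.

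I expect the main obstacle to be bookkeeping rather than analysis: one must retain the exact values of the coefficients linear in $\tau$ --- this is what $M_1$ and $M_2$ encode --- because they will later have to be balanced against the good terms of Lemma \ref{lem2} under the explicit condition \eqref{tau}, whereas every other contribution must be verified to be of strictly higher order in $\tau$, $\delta_R$, $\delta_S$, or $\|\phi^{-\mathbf{X}}\|_{L^{\infty}}$, or time-integrable at the rate $(1+t)^{-4/3}$. The term $R_0^{\tau}(t)$ is the most delicate, since the Young parameters used to split its mixed $r^{-\mathbf{X}}\phi_{\xi}^{-\mathbf{X}}$ products must be chosen simultaneously compatibly with the target coefficients on both $\tau\int_{\mathbb{R}}(\phi_{\xi}^{-\mathbf{X}})^2 w$ and $\tau\int_{\mathbb{R}}(r^{-\mathbf{X}})^2 w$, and several of its summands require the interpolation of the $\min\{\cdot,\cdot\}$ bounds of Lemma \ref{lem1} to land uniformly on the rate $C\delta_R^{1/3}(1+t)^{-4/3}$.
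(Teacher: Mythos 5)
Your overall strategy --- a term-by-term estimate of $\mathbf{N}(t)$ (including all the summands of $R_0^{\tau}$) via Cauchy--Schwarz, Young, and H\"older, the weight bounds \eqref{WFP}, the profile decay from Lemma \ref{lem0}, the rarefaction estimates of Lemma \ref{lem1}, the shift ODE \eqref{shift}, and the a priori smallness of $\|\phi^{-\mathbf{X}}\|_{L^{\infty}}$ --- is exactly the route the paper takes, and your grouping of the terms (those carrying $\dot{\mathbf{X}}$, the cubic/quartic-$\phi$ terms, the pure-profile quadratic-$\phi$ terms, and the quadratic-$r$ terms) matches the paper's organization.

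One small inconsistency worth flagging: you state that you ``apply Young's inequality to each'' of the seven terms carrying a factor of $\dot{\mathbf{X}}(t)$, yet you claim the resulting coefficient of $|\dot{\mathbf{X}}|^2$ is only $\tfrac{u_m^2}{64}+\tfrac{5\tau u_m^2}{64}$. Two $\tau$-free terms $\times\tfrac{u_m^2}{64}$ would already give $\tfrac{2u_m^2}{64}$, so your own bookkeeping does not close. The paper avoids this by \emph{not} applying Young's to two of the seven: for $-\tfrac{\dot{\mathbf{X}}(t)}{2}\int(\phi^{-\mathbf{X}})^2 w' u_{\xi}^{S}\,d\xi$ (the estimate $(\ref{N2})$) and $-\tfrac{\tau\dot{\mathbf{X}}(t)}{2}\int(r^{-\mathbf{X}})^2 w'u_{\xi}^{S}\,d\xi$ (the estimate $(\ref{N10})$) it substitutes the explicit ODE \eqref{shift} for $\dot{\mathbf{X}}(t)$, yielding a pure product of integrals and hence a $\|\phi^{-\mathbf{X}}\|_{L^{\infty}}$-weighted quadratic bound with \emph{no} residual $|\dot{\mathbf{X}}|^2$ contribution. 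You would need the same device to land on the stated coefficient. A second, less important remark: for the pure-profile quadratic term $-3\int(\phi^{-\mathbf{X}})^2(u^R-u_m)u^S w'u_{\xi}^{S}\,d\xi$ the paper uses the crude $|u^R-u_m|\le\delta_R$ (estimate $(\ref{N3})$) rather than Lemma \ref{WIE}; invoking the wave-interaction bounds there, as you propose, would produce a time-decaying integral that does not fit the claimed structure of the estimate, so the simpler pointwise bound is the right tool. Neither point is a genuine gap in the idea; both are bookkeeping choices that you would be forced into when trying to reproduce the exact coefficients $M_1$, $M_2$, and $\tfrac{u_m^2}{64}+\tfrac{5\tau u_m^2}{64}$.
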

	\begin{proof}
		We rewrite $\mathbf{N}(t)$ as:
		\begin{align*}
			\mathbf{N}(t):=&\dot{\mathbf{X}}(t) \int_{\mathbb{R}} \phi^{-\mathbf{X}} w u_{\xi}^{R} d\xi-\frac{\dot{\mathbf{X}}(t)}{2} \int_{\mathbb{R}}\left(\phi^{-\mathbf{X}}\right)^{2} w^{\prime} u_{\xi}^{S} d \xi\nonumber\\
			&-3\int_{\mathbb{R}}(\phi^{-\mathbf{X}})^2(u^R-u_{m})u^Sw'u_{\xi}^Sd\xi+\int_{\mathbb{R}}(\phi^{-\mathbf{X}})^3w(u_{\xi}^S+u_{\xi}^R)d\xi\\
			&-2\int_{\mathbb{R}}(\phi^{-\mathbf{X}})^3(u^S+u^R-u_{m})w'u_{\xi}^Sd\xi-\tau\dot{\mathbf{X}}(t)\int_{\mathbb{R}}\phi^{-\mathbf{X}}q_{\xi}^{S}w'u_{\xi}^{S}d\xi\\
			&-\tau\dot{\mathbf{X}}(t) \int_{\mathbb{R}}r^{-\mathbf{X}}\left(u_{\xi}^{S}+u_{\xi}^{R}\right)w'u_{\xi}^{S}d\xi+\tau\dot{\mathbf{X}}(t)\int_{\mathbb{R}}r^{-\mathbf{X}}w\tilde{q}_{\xi}d\xi\\
			&-\tau\dot{\mathbf{X}}(t)\int_{\mathbb{R}}\phi^{-\mathbf{X}}u_{\xi\xi}^{R}w'u_{\xi}^{S}d\xi-\frac{\tau\dot{\mathbf{X}}(t)}{2}\int_{\mathbb{R}}\left(r^{-\mathbf{X}}\right)^{2} w^{\prime} u_{\xi}^{S} d \xi\nonumber\\
			&+\frac{\tau\sigma}{2}\int_{\mathbb{R}}(r^{-\mathbf{X}})^2w'u_{\xi}^Sd\xi+\tau\sigma\int_{\mathbb{R}}r^{-\mathbf{X}}u_{\xi\xi}^Rwd\xi\nonumber\\
			&+6\tau\int_{\mathbb{R}}r^{-\mathbf{X}}u^R(u_{\xi}^R)^2wd\xi+3\tau\int_{\mathbb{R}}r^{-\mathbf{X}}(u^R)^2u_{\xi\xi}^Rwd\xi+R_0^{\tau}(t).
		\end{align*}
		Since $$\frac{15}{8}u_m^2\leq w < \frac{15}{2}u_m^2,\ -\frac{5}{2}u_m\leq w'\leq0,\ 0\leq w''\leq\frac{15}{2}$$ and $|u_{\xi}^S|\leq28u_m^3$ if $\tau<\frac{1}{252\sigma u_m^2}$. Then by the Cauchy inequality and H$\ddot{\text{o}}$lder inequality, we obtain
		\begin{equation}\label{N1}
			\begin{aligned}
				\left|\dot{\mathbf{X}}(t) \int_{\mathbb{R}} \phi^{-\mathbf{X}} w u_{\xi}^{R} d\xi\right|\leq &\varepsilon\left|\dot{{\mathbf{X}}}(t)\right|^2+C(\varepsilon)\left(\int_{\mathbb{R}}\phi^{-\mathbf{X}}wu_{\xi}^Rd\xi\right)^2\\
				\leq&\varepsilon\left|\dot{{\mathbf{X}}}(t)\right|^2+C(\varepsilon)\int_{\mathbb{R}}\left(\phi^{-\mathbf{X}}\right)^2wu_{\xi}^Rd\xi\int_{\mathbb{R}}wu_{\xi}^Rd\xi\\
				\leq&\varepsilon\left|\dot{{\mathbf{X}}}(t)\right|^2+C(\varepsilon)u_{m}^2\delta_R\int_{\mathbb{R}}\left(\phi^{-\mathbf{X}}\right)^2wu_{\xi}^Rd\xi.
			\end{aligned}
		\end{equation}
		Taking $\varepsilon=\frac{u_m^2}{64}.$ By $(\ref{shift})$, we have
		\begin{equation}\label{N2}
			\begin{aligned}
				&\left|-\frac{\dot{\mathbf{X}}(t)}{2} \int_{\mathbb{R}}\left(\phi^{-\mathbf{X}}\right)^{2} w^{\prime} u_{\xi}^{S} d \xi\right|
				=\left| \frac{16}{25u_m^2} \left( \int_{\mathbb{R}} \phi^{-\mathbf{X}} wu_{\xi}^S d\xi \right) \left(\int_{\mathbb{R}}(\phi^{-\mathbf{X}})^2 w' u_{\xi}^s  d\xi \right) \right|\\
				\leq &\frac{16}{25u_m^2} \|\phi^{-\mathbf{X}}\|_{L^\infty(\mathbb{R})} \|w\|_{L^\infty(\mathbb{R})} \|w'\|_{L^\infty(\mathbb{R})} \int_{\mathbb{R}} u_{\xi}^s d\xi \int_{\mathbb{R}} (\phi^{-\mathbf{X}})^2 u_{\xi}^S d\xi\\
				\leq &\frac{48}{25u_m}  \cdot\frac{15}{2}u_m^2\cdot \frac{5}{2}u_m\|\phi^{-\mathbf{X}}\|_{L^\infty(\mathbb{R})} \int_{\mathbb{R}} (\phi^{-\mathbf{X}})^2 u_{\xi}^S d\xi\\
				\leq &C u_m^2 \|\phi^{-\mathbf{X}}\|_{L^{\infty}(\mathbb{R})} \int_{\mathbb{R}} (\phi^{-\mathbf{X}})^2 u_{\xi}^S d\xi.
			\end{aligned}
		\end{equation}
		With this defined of $\delta_R=|u_+-u_m|$, we obtain
		\begin{equation}\label{N3}
			\begin{aligned}
				\left|-3\int_{\mathbb{R}}(\phi^{-\mathbf{X}})^2(u^R-u_{m})u^Sw'u_{\xi}^Sd\xi\right|
				\leq&3\delta_R\|u^S\|_{L^{\infty}(\mathbb{R})}\|w'\|_{L^{\infty}(\mathbb{R})} \int_{\mathbb{R}}(\phi^{-\mathbf{X}})^2u_{\xi}^Sd\xi\\
				\leq&C u_m^2\delta_R\int_{\mathbb{R}}(\phi^{-\mathbf{X}})^2u_{\xi}^Sd\xi.
			\end{aligned}
		\end{equation} 
		Since $|a+b|\leq |a|+|b|,$ we have
		\begin{equation}\label{N4}
			\begin{aligned}
				&\left|\int_{\mathbb{R}}(\phi^{-\mathbf{X}})^3w(u_{\xi}^S+u_{\xi}^R)d\xi\right|\\
				\leq &\|\phi^{-\mathbf{X}}\|_{L^\infty(\mathbb{R})} \|w\|_{L^\infty(\mathbb{R})}\int_{\mathbb{R}} (\phi^{-\mathbf{X}})^2 u_{\xi}^S d\xi+\|\phi^{-\mathbf{X}}\|_{L^\infty(\mathbb{R})}\int_{\mathbb{R}} (\phi^{-\mathbf{X}})^2w u_{\xi}^Rd\xi\\
				\leq&C u_m^2\|\phi^{-\mathbf{X}}\|_{L^\infty(\mathbb{R})}\int_{\mathbb{R}} (\phi^{-\mathbf{X}})^2 u_{\xi}^S d\xi+\|\phi^{-\mathbf{X}}\|_{L^\infty(\mathbb{R})}\int_{\mathbb{R}} (\phi^{-\mathbf{X}})^2w u_{\xi}^Rd\xi,
			\end{aligned}
		\end{equation}
		and
		\begin{equation}\label{N5}
			\begin{aligned}
				&\left|-2\int_{\mathbb{R}}(\phi^{-\mathbf{X}})^3(u^S+u^R-u_{m})w'u_{\xi}^Sd\xi\right|\\
				\leq&\left|2\int_{\mathbb{R}}(\phi^{-\mathbf{X}})^3u^Sw'u_{\xi}^Sd\xi\right|+\left|2\int_{\mathbb{R}}(\phi^{-\mathbf{X}})^3(u^R-u_{m})w'u_{\xi}^Sd\xi\right|\\
				\leq&\left(C u_m^2\|\phi^{-\mathbf{X}}\|_{L^\infty(\mathbb{R})}+C u_m\delta_R\|\phi^{-\mathbf{X}}\|_{L^\infty(\mathbb{R})}\right)\int_{\mathbb{R}} (\phi^{-\mathbf{X}})^2 u_{\xi}^S d\xi.
			\end{aligned}
		\end{equation}
		By Lemma $\ref{lem0}$ and $\eqref{WFP}$, we obtain  $\left|q_{\xi}^S\right|\leq9u_m^2\left|u_{\xi}^S\right|\leq Cu_{m}^5$ and $\left|w'\right|\leq \frac{5}{2}u_m$. Hence, we have 
		\begin{equation*}
			\begin{aligned}
				\left|-\tau\dot{\mathbf{X}}(t)\int_{\mathbb{R}}\phi^{-\mathbf{X}}q_{\xi}^{S}w'u_{\xi}^{S}d\xi\right|
				\leq&C\tau u_m^5\left|\dot{{\mathbf{X}}}(t)\int_{\mathbb{R}}\phi^{-\mathbf{X}}w'u_{\xi}^Sd\xi\right|\\
				\leq&C\tau \left(\varepsilon u_{m}^2\left|\dot{\mathbf{X}}\right|^2+\frac{u_{m}^8}{4\varepsilon}\left(\int_{\mathbb{R}}\phi^{-\mathbf{X}}w'u_{\xi}^Sd\xi\right)^2\right)\\
				\leq&C\tau u_m^2\varepsilon\left|\dot{\mathbf{X}}\right|^2+\frac{C\tau u_m^8}{\varepsilon}\left(\int_{\mathbb{R}}(\phi^{-\mathbf{X}})^2u_{\xi}^Sd\xi\right)\left(\int_{\mathbb{R}}(w')^2u_{\xi}^Sd\xi\right)\\
				\leq&C\tau u_m^2\varepsilon\left|\dot{\mathbf{X}}\right|^2+\frac{C\tau u_m^{11}}{\varepsilon}\int_{\mathbb{R}}(\phi^{-\mathbf{X}})^2u_{\xi}^Sd\xi.
			\end{aligned}
		\end{equation*}
		By choosing a suitable $\varepsilon$, we get 
		\begin{equation}\label{N6}
			\begin{aligned}
				\left|-\tau\dot{\mathbf{X}}(t)\int_{\mathbb{R}}\phi^{-\mathbf{X}}q_{\xi}^{S}w'u_{\xi}^{S}d\xi\right|\leq \frac{\tau}{64}u_m^2\left|\dot{\mathbf{X}}(t)\right|^2+C\tau u_m^{11}\int_{\mathbb{R}}(\phi^{-\mathbf{X}})^2u_{\xi}^Sd\xi.
			\end{aligned}
		\end{equation}
		Note that $\left|\frac{(w')^2}{w}\right|\leq \frac{10}{3}$. Moreover by Lemma $\ref{lem1}$ we have
		\begin{equation}\label{N7}
			\begin{aligned}
				&\left|-\tau\dot{\mathbf{X}}(t) \int_{\mathbb{R}}r^{-\mathbf{X}}\left(u_{\xi}^{S}+u_{\xi}^{R}\right)w'u_{\xi}^{S}d\xi\right|\\
				\leq&\frac{\tau}{32}u_m^2\left|\dot{\mathbf{X}}(t)\right|^2+\frac{16\tau}{u_m^2}\left(\int_{\mathbb{R}}r^{-\mathbf{X}}u_{\xi}^{S}w'u_{\xi}^{S}d\xi\right)^2+\frac{16\tau}{u_m^2}\left(\int_{\mathbb{R}}r^{-\mathbf{X}}u_{\xi}^{R}w'u_{\xi}^{S}d\xi\right)^2\\
				\leq&\frac{\tau}{32}u_m^2\left|\dot{\mathbf{X}}(t)\right|^2+\frac{16\tau}{u_m^2}\left(\int_{\mathbb{R}}(r^{-\mathbf{X}})^2wd\xi\right)\left(\int_{\mathbb{R}}\frac{(w')^2}{w}(u_{\xi}^S)^4d\xi\right)\\
				&+\frac{16\tau}{u_m^2}\left(\int_{\mathbb{R}}(r^{-\mathbf{X}})^2wd\xi\right)\left(\int_{\mathbb{R}}\frac{(w')^2}{w}(u_{\xi}^Su_{\xi}^R)^2d\xi\right)\\
				\leq&\frac{\tau}{32}u_m^2\left|\dot{\mathbf{X}}(t)\right|^2+C\tau u_m^8\int_{\mathbb{R}}(r^{-\mathbf{X}})^2wd\xi+C\delta_R^2\int_{\mathbb{R}}(r^{-\mathbf{X}})^2wd\xi.
			\end{aligned}
		\end{equation}
		Given that $\tilde{q}=q^S+u_{\xi}^R,$ we have
		\begin{equation}\label{N81}
			\begin{aligned}
				\left|\tau\dot{\mathbf{X}}(t)\int_{\mathbb{R}}r^{-\mathbf{X}}w\tilde{q}_{\xi}d\xi\right|\leq \left|\tau\dot{\mathbf{X}}(t)\int_{\mathbb{R}}r^{-\mathbf{X}}wq_{\xi}^Sd\xi\right|+\left|\tau\dot{\mathbf{X}}(t)\int_{\mathbb{R}}r^{-\mathbf{X}}wu_{\xi\xi}^Rd\xi\right|.
			\end{aligned}
		\end{equation}
		By Lemma $\ref{lem1}$ and $(\ref{N6})$, we obtain
		\begin{equation}\label{N82}
			\begin{aligned}
				&\left|\tau\dot{\mathbf{X}}(t)\int_{\mathbb{R}}r^{-\mathbf{X}}w\tilde{q}_{\xi}d\xi\right|\leq\frac{\tau}{64}u_m^2\left|\dot{\mathbf{X}}(t)\right|^2+C\tau u_m^8\int_{\mathbb{R}}(r^{-\mathbf{X}})^2wd\xi,\\
				&\left|\tau\dot{\mathbf{X}}(t)\int_{\mathbb{R}}r^{-\mathbf{X}}wu_{\xi\xi}^Rd\xi\right|\leq\frac{\tau}{64}u_m^2\left|\dot{\mathbf{X}}(t)\right|^2+C\delta_R^2\int_{\mathbb{R}}(r^{-\mathbf{X}})^2wd\xi,
			\end{aligned}
		\end{equation}
		substituting $(\ref{N82})$ into $(\ref{N81})$, we have
		\begin{equation}\label{N8}
			\left|\tau\dot{\mathbf{X}}(t)\int_{\mathbb{R}}r^{-\mathbf{X}}w\tilde{q}_{\xi}d\xi\right|\leq\frac{\tau}{32}u_m^2\left|\dot{\mathbf{X}}(t)\right|^2+\left(C\tau u_m^8+C\delta_{R}^2\right)\int_{\mathbb{R}}(r^{-\mathbf{X}})^2wd\xi.
		\end{equation}
		Similarly, we have
		\begin{equation}\label{N9}
			\left|-\tau\dot{\mathbf{X}}(t)\int_{\mathbb{R}}\phi^{-\mathbf{X}}u_{\xi\xi}^{R}w'u_{\xi}^{S}d\xi\right|\leq\frac{\tau}{64}u_m^2\left|\dot{\mathbf{X}}(t)\right|^2+C\delta_R^2\int_{\mathbb{R}}(\phi^{-\mathbf{X}})^2u_{\xi}^Sd\xi,
		\end{equation}
		\begin{equation}\label{N10}
			\left|-\frac{\tau\dot{\mathbf{X}}(t)}{2}\int_{\mathbb{R}}\left(r^{-\mathbf{X}}\right)^{2} w^{\prime} u_{\xi}^{S} d \xi\right|\leq C\tau u_m^3\|\phi^{-\mathbf{X}}\|_{L^{\infty}(\mathbb{R})}\int_{\mathbb{R}}(r^{-\mathbf{X}})^2wd\xi,
		\end{equation}
		\begin{equation}\label{N11}
			\begin{aligned}
				\left|\frac{\tau\sigma}{2}\int_{\mathbb{R}}(r^{-\mathbf{X}})^2w'u_{\xi}^Sd\xi\right|\leq C\tau u_m^4\int_{\mathbb{R}}(r^{-\mathbf{X}})^2wd\xi,
			\end{aligned}
		\end{equation}
		By the Lemma $(\ref{lem1})$, we have 
		\begin{equation}\label{N12}
			\begin{aligned}
				\left|\tau\sigma\int_{\mathbb{R}}r^{-\mathbf{X}}u_{\xi\xi}^Rwd\xi\right|\leq& \frac{\tau\sigma}{2}\left|\int_{\mathbb{R}}(r^{-\mathbf{X}})^2w(u_{\xi\xi}^R)^\frac{1}{3}d\xi\right|+\frac{\tau\sigma}{2}\left|\int_{\mathbb{R}}w(u_{\xi\xi}^R)^{\frac{5}{3}}d\xi\right|\\
				\leq&C\delta_R^{\frac{1}{3}}\int_{\mathbb{R}}(r^{-\mathbf{X}})^2wd\xi+C\delta_R^{\frac{1}{3}}(1+t)^{-\frac{4}{3}},
			\end{aligned}
		\end{equation}
		Similarly,
		\begin{equation}\label{N13}
			\begin{aligned}
				\left|3\tau\int_{\mathbb{R}}r^{-\mathbf{X}}(u^R)^2u_{\xi\xi}^Rwd\xi\right|\leq C\delta_R^{\frac{1}{3}}\int_{\mathbb{R}}(r^{-\mathbf{X}})^2wd\xi+C\delta_R^{\frac{1}{3}}(1+t)^{-\frac{4}{3}},
			\end{aligned}
		\end{equation}
		and
		\begin{equation}\label{N14}
			\begin{aligned}
				\left|6\tau\int_{\mathbb{R}}r^{-\mathbf{X}}u^R(u_{\xi}^R)^2wd\xi\right|\leq&3\tau|u_+|\left(\int_{\mathbb{R}}(r^{-\mathbf{X}})^2w(u_{\xi}^R)^{\frac{1}{3}}d\xi+\int_{\mathbb{R}}w(u_{\xi}^R)^{\frac{11}{3}}d\xi\right)\\
				\leq&C\delta_R^{\frac{1}{3}}\int_{\mathbb{R}}(r^{-\mathbf{X}})^2wd\xi+C\delta_R(1+t)^{-\frac{8}{3}}.
			\end{aligned}
		\end{equation}
		Finally, we can estimate $R_0^{\tau}(t)$:
		\begin{equation*}
			\begin{aligned}
				R_{0}^{\tau}(t)=&- \tau\sigma\int_{\mathbb{R}}r^{-\mathbf{X}}\phi^{-\mathbf{X}}w''\left(u_{\xi}^{S}\right)^{2}d\xi-\tau\sigma\int_{\mathbb{R}}r^{-\mathbf{X}}\phi^{-\mathbf{X}}w'u^S_{\xi\xi}d\xi\\
				&-3\tau\int_{\mathbb{R}}r^{-\mathbf{X}}(\phi^{-\mathbf{X}})^{2}\phi_{\xi}^{-\mathbf{X}}w'u_{\xi}^{S}d\xi -6\tau\int_{\mathbb{R}}(u^{S}+u^{R}-u_{m})r^{-\mathbf{X}}\phi^{-\mathbf{X}}\phi_{\xi}^{-\mathbf{X}}w'u_{\xi}^{S}d\xi\\
				&-3\tau\int_{\mathbb{R}}(u^{S}+u^{R}-u_{m})^{2}r^{-\mathbf{X}}\phi_{\xi}^{-\mathbf{X}}w'u_{\xi}^{S}d\xi-3\tau\int_{\mathbb{R}}(u_{\xi}^{S}+u_{\xi}^{R})r^{-\mathbf{X}}(\phi^{-\mathbf{X}})^{2}w'u_{\xi}^{S}d\xi\\
				&-6\tau\int_{\mathbb{R}}(u^{S}+u^{R}-u_{m})(u_{\xi}^{S}+u_{\xi}^{R})r^{-\mathbf{X}}\phi^{-\mathbf{X}}w'u_{\xi}^{S}d\xi-\frac{\tau}{2}\int_{\mathbb{R}}(r^{-\mathbf{X}})w''(u_{\xi}^{S})^{2}d\xi\\
				&-\frac{\tau}{2}\int_{\mathbb{R}}(r^{-\mathbf{X}})^{2}w'u_{\xi\xi}^Sd\xi+\frac{1}{2}\int_{\mathbb{R}}(\phi^{-\mathbf{X}})^{2}w'R_{1}^{\tau}u_{\xi}^{S}d\xi-\tau\sigma\int_{\mathbb{R}}u_{\xi\xi}^R\phi^{-\mathbf{X}}w'u_{\xi}^Sd\xi\\
				&- 6\tau\int_{\mathbb{R}}(u^{R})(u_{\xi}^{R})^{2}\phi^{-\mathbf{X}}w'u_{\xi}^{S}d\xi-3\tau\int_{\mathbb{R}}(u^{R})^{2}u_{\xi\xi}^{R}\phi^{-\mathbf{X}}w'u_{\xi}^{S}d\xi,
			\end{aligned}
		\end{equation*}
		Calculate the terms on the right side of the equation in order. we get
		\begin{align*}
			\left|\tau\sigma\int_{\mathbb{R}}r^{-\mathbf{X}}\phi^{-\mathbf{X}}w''\left(u_{\xi}^{S}\right)^{2}d\xi\right|\leq& \frac{\tau\sigma}{2}\left(\int_{\mathbb{R}}(\phi^{-\mathbf{X}})^2\frac{w''}{w}(u^S_{\xi})^3d\xi+\int_{\mathbb{R}}(r^{-\mathbf{X}})^2ww''u^S_{\xi}d\xi\right)\\
			\leq&C\tau u_m^6\int_{\mathbb{R}}(\phi^{-\mathbf{X}})^2u_{\xi}^Sd\xi+C\tau u_m^5\int_{\mathbb{R}}(r^{-\mathbf{X}})^2wd\xi,\\
			\left|\tau\sigma\int_{\mathbb{R}}r^{-\mathbf{X}}\phi^{-\mathbf{X}}w'u^S_{\xi\xi}d\xi\right|\leq&5\tau\sigma u_m^2\left(\int_{\mathbb{R}}(\phi^{-\mathbf{X}})^2\frac{(w')^2}{w}(u^S_{\xi})^2d\xi+\int_{\mathbb{R}}(r^{-\mathbf{X}})^2wd\xi\right)\\
			\leq&C\tau u_m^7\int_{\mathbb{R}}(\phi^{-\mathbf{X}})^2u^S_{\xi}d\xi+C\tau u_m^4\int_{\mathbb{R}}(r^{-\mathbf{X}})^2wd\xi,
		\end{align*}
		and
		\begin{align*}
			&\left|-3\tau\int_{\mathbb{R}}r^{-\mathbf{X}}(\phi^{-\mathbf{X}})^{2}\phi_{\xi}^{-\mathbf{X}}w'u_{\xi}^{S}d\xi\right|\\
			\leq&3\tau\lVert\phi^{-\mathbf{X}}\rVert_{L^{\infty}(\mathbb{R})}^2\left\lVert\frac{w'}{w}\right\rVert_{L^{\infty}(\mathbb{R})}\|u_{\xi}^S\|_{L^{\infty}(\mathbb{R})}\left|\int_{\mathbb{R}}r^{-\mathbf{X}}\phi_{\xi}^{-\mathbf{X}}wd\xi\right|\\
			\leq&C\tau u_m^2\lVert\phi^{-\mathbf{X}}\rVert_{L^{\infty}(\mathbb{R})}^2\left(\int_{\mathbb{R}}(r^{-\mathbf{X}})^2wd\xi+\int_{\mathbb{R}}(\phi_{\xi}^{-\mathbf{X}})^2wd\xi\right),
			\end{align*}
			\begin{align*}
			&\left|-6\tau\int_{\mathbb{R}}(u^{S}+u^{R}-u_{m})r^{-\mathbf{X}}\phi^{-\mathbf{X}}\phi_{\xi}^{-\mathbf{X}}w'u_{\xi}^{S}d\xi\right|\\
			\leq&6\tau\lVert u^S+u^R-u_m\rVert_{L^{\infty}(\mathbb{R})}\lVert\phi^{-\mathbf{X}}\rVert_{L^{\infty}(\mathbb{R})}\left\lVert\frac{w'}{w}\right\rVert_{L^{\infty}(\mathbb{R})}\|u_{\xi}^S\|_{L^{\infty}(\mathbb{R})}\left|\int_{\mathbb{R}}r^{-\mathbf{X}}\phi_{\xi}^{-\mathbf{X}}wd\xi\right|\\
			\leq&C\tau(u_m+\delta_R)u_m^2\lVert\phi^{-\mathbf{X}}\rVert_{L^{\infty}(\mathbb{R})}\left(\int_{\mathbb{R}}(r^{-\mathbf{X}})^2wd\xi+\int_{\mathbb{R}}(\phi_{\xi}^{-\mathbf{X}})^2wd\xi\right).
		\end{align*}
		Similarly, we have
		\begin{align*}
			&\left|-3\tau\int_{\mathbb{R}}(u^{S}+u^{R}-u_{m})^{2}r^{-\mathbf{X}}\phi_{\xi}^{-\mathbf{X}}w'u_{\xi}^{S}d\xi\right|\\
			\leq&3\tau\lVert u^S+u^R-u_m\rVert_{L^{\infty}(\mathbb{R})}^2\left\lVert\frac{w'}{w}\right\rVert_{L^{\infty}(\mathbb{R})}\|u_{\xi}^S\|_{L^{\infty}(\mathbb{R})}\left|\int_{\mathbb{R}}r^{-\mathbf{X}}\phi_{\xi}^{-\mathbf{X}}wd\xi\right|\\
			\leq&C\tau \left(u_m^2+\delta_R\right)u_m^2\left(\int_{\mathbb{R}}(r^{-\mathbf{X}})^2wd\xi+\int_{\mathbb{R}}(\phi_{\xi}^{-\mathbf{X}})^2wd\xi\right),
		\end{align*}
		and
		\begin{align*}
			&\left|-3\tau\int_{\mathbb{R}}(u_{\xi}^{S}+u_{\xi}^{R})r^{-\mathbf{X}}(\phi^{-\mathbf{X}})^{2}w'u_{\xi}^{S}d\xi\right|\\
			\leq&3\tau\|u_{\xi}^S+u^R_{\xi}\|_{L^{\infty}(\mathbb{R})}\lVert\phi^{-\mathbf{X}}\rVert_{L^{\infty}(\mathbb{R})}\left\lVert\frac{w'}{w}\right\rVert_{L^{\infty}(\mathbb{R})}\|u_{\xi}^S\|_{L^{\infty}(\mathbb{R})}\left|\int_{\mathbb{R}}r^{-\mathbf{X}}\phi^{-\mathbf{X}}wu_{\xi}^Sd\xi\right|\\
			\leq&C\tau(u_m^3+\delta_R)u_m^2\lVert\phi^{-\mathbf{X}}\rVert_{L^{\infty}(\mathbb{R})}\int_{\mathbb{R}}(r^{-\mathbf{X}})^2wd\xi+C\tau(u_m^3+\delta_R)u_m^5\lVert\phi^{-\mathbf{X}}\rVert_{L^{\infty}(\mathbb{R})}\int_{\mathbb{R}}(\phi^{-\mathbf{X}})^2u^S_{\xi}d\xi.
		\end{align*}
		Likewise,
		\begin{align*}
			&\left|-6\tau\int_{\mathbb{R}}(u^{S}+u^{R}-u_{m})(u_{\xi}^{S}+u_{\xi}^{R})r^{-\mathbf{X}}\phi^{-\mathbf{X}}w'u_{\xi}^{S}d\xi\right|\\
			\leq& C\tau(u_m+\delta_R)(u_m^3+\delta_R)\int_{\mathbb{R}}(r^{-\mathbf{X}})^2wd\xi+C\tau u_m^3(u_m+\delta_R)(u_m^3+\delta_R)\int_{\mathbb{R}}(\phi^{-\mathbf{X}})^2u^S_{\xi}d\xi.
		\end{align*}
		By $(\ref{WF})$ and $(\ref{WFP})$, we have
		\begin{align*}
			&\left|-\frac{\tau}{2}\int_{\mathbb{R}}(r^{-\mathbf{X}})w''(u_{\xi}^{S})^{2}d\xi-\frac{\tau}{2}\int_{\mathbb{R}}(r^{-\mathbf{X}})^{2}w'u_{\xi\xi}^Sd\xi\right|\\
			\leq&\frac{\tau}{2}\left(\left\lVert\frac{w''}{w}\right\rVert_{L^{\infty}(\mathbb{R})}\|u_{\xi}^S\|_{L^{\infty}(\mathbb{R})}^2+\left\lVert\frac{w'}{w}\right\rVert_{L^{\infty}(\mathbb{R})}\|u_{\xi\xi}^S\|_{L^{\infty}(\mathbb{R})}\right)\int_{\mathbb{R}}(r^{-\mathbf{X}})^2wd\xi
			\leq C\tau u_m^4\int_{\mathbb{R}}(r^{-\mathbf{X}})^2wd\xi.
		\end{align*}
		Notice that if $\tau\leq \frac{1}{18\sigma u_m^2}$, we have
		\begin{align*}
			\left|R_{1}^{\tau}(u^S)\right|\leq \tau\sigma\left(432u_m^3+486u_m^4\right).
		\end{align*}
		Then, we get
		\begin{align*}
			\left|\frac{1}{2}\int_{\mathbb{R}}(\phi^{-\mathbf{X}})^{2}w'R_{1}^{\tau}u_{\xi}^{S}d\xi\right|\leq C\tau u_m^3\left(u_m^3+u_m^4\right)\int_{\mathbb{R}}(\phi^{-\mathbf{X}})^2u^S_{\xi}d\xi.
		\end{align*}
		For these last three items, by Lemma \ref{lem1} and $\|u^S_{\xi}\|_{L^{\infty}(\mathbb{R})}\leq 28u_m^3$, we get
		\begin{align*}
			&\left|-\tau\sigma\int_{\mathbb{R}}u_{\xi\xi}^R\phi^{-\mathbf{X}}w'u_{\xi}^Sd\xi-6\tau\int_{\mathbb{R}}(u^{R})(u_{\xi}^{R})^{2}\phi^{-\mathbf{X}}w'u_{\xi}^{S}d\xi-3\tau\int_{\mathbb{R}}(u^{R})^{2}u_{\xi\xi}^{R}\phi^{-\mathbf{X}}w'u_{\xi}^{S}d\xi\right|\\
			\leq&C \|w'\|_{L^{\infty}(\mathbb{R})}^2\int_{\mathbb{R}}(u^R_{\xi\xi})^2d\xi+\frac{\tau\sigma}{2} \|u^{S}_{\xi}\|_{L^{\infty}(\mathbb{R})}\int_{\mathbb{R}}(\phi^{-\mathbf{X}})^2u^S_{\xi}d\xi+C\|w'\|_{L^{\infty}(\mathbb{R})}^2\int_{\mathbb{R}}(u^{R})^2(u^R_{\xi})^{4}d\xi\\
			&+C\|w'\|_{L^{\infty}(\mathbb{R})}^2\int_{\mathbb{R}}(u^R)^4(u^R_{\xi\xi})^2d\xi+\frac{9}{2}\tau \|u^S_{\xi}\|_{L^{\infty}(\mathbb{R})}\int_{\mathbb{R}}(\phi^{-\mathbf{X}})^2u^S_{\xi}d\xi\\
			\leq&\tau\left(\frac{5\sigma}{4}u_m+126u_m^3\right)\int_{\mathbb{R}}(\phi^{-\mathbf{X}})^2u^S_{\xi}d\xi+Cu_m^2\delta_R^{\frac{2}{3}}(1+t)^{-\frac{4}{3}}.
		\end{align*}
		Hence, we have
		\begin{equation}\label{N15}
			\begin{aligned}
				\left|R_0^{\tau}(t)\right|
				\leq&\tau(\frac{5\sigma}{4}u_m+126u_m^3+C u_m^{6}+Cu_m^7+C\|\phi^{-\mathbf{X}}\|_{L^{\infty}(\mathbb{R})})\int_{\mathbb{R}}(\phi^{-\mathbf{X}})^2u_{\xi}^Sd\xi\\
				&+C\tau\left(u_m^4+ u_m^5+\delta_R+\|\phi^{-\mathbf{X}}\|_{L^{\infty}(\mathbb{R})}\right)\int_{\mathbb{R}}(r^{-\mathbf{X}})^2wd\xi\\
				&+C\tau\left(u_m^4+\delta_R+\|\phi^{-\mathbf{X}}\|_{L^{\infty}(\mathbb{R})}\right)\int_{\mathbb{R}}(\phi_{\xi}^{-\mathbf{X}})^2wd\xi+C\delta_R^{\frac{1}{3}}(1+t)^{-\frac{4}{3}}.
			\end{aligned}
		\end{equation}
	\end{proof}
	Estimating $\mathbf{J}(t)$ requires us to account for the wave interactions. Given that the shifted Oleinik shock and the shifted rarefaction wave are permanently attached, their interaction is inherently intricate. Thus, we have the following lemma, see \cite{ref16}.
	\begin{lemma}\label{WIE}
		For any $\varepsilon\in(0,1)$, there exists a positive constant $C$ (is independent of $\delta_R, \tau$ and $T$), such that
		\begin{equation} 
			\begin{aligned}
				&\int_{-\infty}^{0} |u^{S} - u_{m}|u_{\xi}^{R} d\xi \leq C\delta_{S}\delta_{R}^{\frac{2\varepsilon}{2+\varepsilon}} (1+t)^{-1+\varepsilon}, \\
				&\int_{0}^{+\infty} |u^{S} - u_{m}|u_{\xi}^{R} d\xi \leq C\delta_{S}^{2\varepsilon-1} \delta_{R}^{\varepsilon} (1+t)^{-1+\varepsilon}, \\
				&\int_{-\infty}^{0} |u^{R} - u_{m}|u_{\xi}^{S} d\xi \leq C\delta_{S}\delta_{R}^{\frac{2\varepsilon}{2+\varepsilon}} (1+t)^{-1+\varepsilon},  \\
				&\int_{0}^{[f'(u_{+}) - \sigma](t+1)} |u^{R} - u^{r}|u_{\xi}^{S} d\xi \leq C\delta_{S}\delta_{R}^{\varepsilon} (1+t)^{-1+\varepsilon},  \\
				&\int_{0}^{[f'(u_{+}) - \sigma](t+1)} |u^{r} - u_{m}|u_{\xi}^{S} d\xi \leq C\delta_{S}^{-1+2\varepsilon}\delta_{R}^{\varepsilon} (1+t)^{-1+\varepsilon} \ln^{1-\varepsilon}(1 + C\delta_{S}^{2}\delta_{R}t),  \\
				&\int_{[f'(u_{+}) - \sigma](t+1)}^{+\infty} |u^{R} - u_{m}|u_{\xi}^{S} d\xi \leq C\delta_{S}\delta_{R}(1 + C\delta_{S}^{2}\delta_{R}t)^{-1},
			\end{aligned} 
		\end{equation}
		where
		\begin{align*}
			u^{R} = u^{R}(1 + t, \xi + \sigma t; u_{m}, u_{+}) \text{ and } u^{r} = u^{r}\left( \frac{\xi + \sigma(1+t)}{1+t}; u_{m}, u_{+} \right).
		\end{align*}
	\end{lemma}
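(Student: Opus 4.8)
The plan is to prove the six bounds one at a time, in each case combining the pointwise and integral decay estimates of the Oleinik shock profile from Lemma~\ref{lem0} (with $u_+$ there replaced by $u_m$, so that $\delta_S=|u_m-u_-|$ and $\int_{\mathbb{R}}u_\xi^S\,d\xi=\delta_S$) with the decay estimates of the approximate rarefaction wave from Lemma~\ref{lem1} (applied with $(u_-,u_+)$ replaced by $(u_m,u_+)$, time $1+t$ and spatial variable $\xi+\sigma t$; the relevant speeds are then $\lambda_-=f'(u_m)=\sigma$ and $\lambda_+=f'(u_+)>\sigma$, so the rarefaction fan lives in $\xi\in[0,(\lambda_+-\sigma)(1+t)]$ up to an $O(1)$ shift). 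The only elementary inequalities I would use are $\min\{a,b\}\le a^\alpha b^{1-\alpha}$, $\ln(1+x)\le C_\alpha x^\alpha$ and $e^{-cx}\le C_k x^{-k}$, which serve to convert exponential rates into algebraic ones and to absorb logarithms at the cost of small powers of $\delta_S,\delta_R,(1+t)$ that are reabsorbed into the free parameter $\varepsilon$; the overall scheme is that of \cite{ref16}.

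For the estimates that feel only the tails of the two profiles this is routine. In the first bound, $|u^S-u_m|\le|u_m-u_-|=\delta_S$ on $\mathbb{R}_-$, while $\int_{-\infty}^0 u_\xi^R\,d\xi=u^R|_{\xi=0}-u_m$ by monotonicity; as $\xi=0$ lies to the left of the fan, Lemma~\ref{lem1}(5) bounds this by $C_\varepsilon\delta_R^{2\varepsilon/(2+\varepsilon)}(1+t)^{-1+\varepsilon}$. The third bound is the mirror image: on $\mathbb{R}_-$ one has $u_\xi^S\le C\delta_S^3 e^{-C\delta_S^2|\xi|}$ and $|u^R-u_m|\le C_\varepsilon\delta_R^{2\varepsilon/(2+\varepsilon)}(1+t)^{-1+\varepsilon}e^{-\varepsilon|\xi|}$, and integrating the product leaves the convergent factor $\int_{-\infty}^0\delta_S^3 e^{-C\delta_S^2|\xi|}\,d\xi\le C\delta_S$. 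The sixth bound uses that for $\xi\ge(\lambda_+-\sigma)(1+t)$ the rarefaction wave has reached its right state up to $O(\delta_R)$, so $|u^R-u_m|\le C\delta_R$, while $u_\xi^S\le C\delta_S^3(1+C\delta_S^2\xi)^{-2}$; integrating over $\xi\ge(\lambda_+-\sigma)(1+t)\ge C\delta_R(1+t)$ yields exactly $C\delta_S\delta_R(1+C\delta_S^2\delta_R t)^{-1}$.

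For the genuinely interactive bounds one works inside the fan. In the fourth bound, Lemma~\ref{lem1}(6) gives $|u^R-u^r|\le C_\varepsilon\delta_R^\varepsilon(1+t)^{-1+\varepsilon}$ on $[0,(\lambda_+-\sigma)(1+t)]$, and $\int_{\mathbb{R}}u_\xi^S\,d\xi=\delta_S$ closes it. In the fifth bound I would use the explicit self-similar form $u^r=\sqrt{\eta/3}$ with $\eta=\sigma+\xi/(1+t)$ inside the fan, which gives $|u^r-u_m|\le C\xi\big(u_m(1+t)\big)^{-1}$; multiplying by $u_\xi^S\le C\delta_S^3(1+C\delta_S^2\xi)^{-2}$ and integrating up to $(\lambda_+-\sigma)(1+t)$ leaves a logarithm $\ln(1+C\delta_S^2\delta_R t)$, part of which is traded via $\ln(1+x)\le C_\alpha x^\alpha$ for the small powers of $\delta_S,\delta_R,(1+t)$ in the stated bound. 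The second bound is cleanest via H\"older: $\int_0^{+\infty}|u^S-u_m|\,u_\xi^R\,d\xi\le\||u^S-u_m|\|_{L^{p'}(\mathbb{R}_+)}\,\|u_\xi^R\|_{L^p(\mathbb{R})}$, where $\||u^S-u_m|\|_{L^{p'}(\mathbb{R}_+)}\le C_{p'}\delta_S^{1-2/p'}$ for $p'>1$ by the algebraic tail in Lemma~\ref{lem0} and $\|u_\xi^R\|_{L^p(\mathbb{R})}\le C_p\delta_R^{1/p}(1+t)^{-1+1/p}$ by Lemma~\ref{lem1}(2); taking $1/p=\varepsilon$ gives the claimed power.

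I expect the principal difficulty to be bookkeeping rather than any single inequality: in the interactive bounds (the fifth above, and to a lesser degree the second) the algebraic tail $\sim\delta_S(1+\delta_S^2\xi)^{-1}$ of the shock makes the $\xi$-integrals only logarithmically convergent, so one must exploit that the rarefaction fan is confined to the much shorter interval $\xi\le C\delta_R(1+t)$ — which is exactly where the hypothesis $\delta_R=\delta_S^2$ enters — and then balance the surviving logarithm against powers of $\delta_S,\delta_R,(1+t)$ so that all six exponents come out in precisely the form demanded by the energy estimates of Section~\ref{sec4}. Keeping these exponents mutually consistent, not proving any one of them in isolation, is the delicate part; all analytic input is already contained in Lemmas~\ref{lem0} and~\ref{lem1}.
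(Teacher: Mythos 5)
The paper itself does not prove Lemma~\ref{WIE}: it is stated with ``see \cite{ref16}'' and used as a black box, so there is no in-paper proof to compare against. Your strategy, however, is the natural one and matches what the paper does in the one place where it carries out an analogous calculation, namely the squared versions $\int |u^R-u_m|^2(u_\xi^S)^2\,d\xi$ etc.\ inside the proof of Lemma~\ref{lemf}: split at $\xi=0$ and at $\xi=(\lambda_+-\sigma)(1+t)$, use the exponential/algebraic shock tails from Lemma~\ref{lem0} and the rarefaction decay from Lemma~\ref{lem1}, and handle the $O(\sigma)$ mismatch between the arguments $\xi+\sigma t$ of $u^R$ and $\xi+\sigma(1+t)$ of $u^r$ by a triangle inequality and $\|u_x^R\|_{L^\infty}\le C\min\{\delta_R,(1+t)^{-1}\}$. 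Bounds (1), (3), (6) and the H\"older argument for (2) are as you say; (4) requires the triangle-inequality step you only allude to but it is routine.

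The one place worth tightening is your sketch of (5). With $u^r-u_m=\sqrt{(\sigma+\xi/(1+t))/3}-\sqrt{\sigma/3}\le \xi/(6u_m(1+t))$ and $u_\xi^S\le C\delta_S^3(1+C\delta_S^2\xi)^{-2}$, the substitution $y=C\delta_S^2\xi$ gives
\begin{equation*}
\int_0^{L}|u^r-u_m|\,u_\xi^S\,d\xi
\le \frac{C}{u_m\delta_S(1+t)}\,\ln\bigl(1+C\delta_S^2 L\bigr),
\qquad L=(\lambda_+-\sigma)(1+t).
\end{equation*}
If one tracks the prefactor $u_m^{-1}$ (recall $u_m=\delta_S/3$) this reads $C\delta_S^{-2}(1+t)^{-1}\ln(\cdots)$, which a direct comparison does \emph{not} dominate by $C\delta_S^{-1+2\varepsilon}\delta_R^\varepsilon(1+t)^{-1+\varepsilon}\ln^{1-\varepsilon}(\cdots)$ uniformly in $\delta_S$. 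The stated form is reached only under the lemma's own convention that $C$ is independent of $\delta_R,\tau,T$ but may depend on $u_\pm$ (hence on $u_m\sim\delta_S$): once $u_m^{-1}$ is absorbed into $C$, the trade $\ln^\varepsilon(1+x)\le x^\varepsilon$ with $x=C\delta_S^2\delta_R t$ — i.e.\ your $\ln(1+x)\le C_\alpha x^\alpha$ with $\alpha=1$ specifically, since $\alpha>1$ would overshoot the $(1+t)^\varepsilon$ budget — gives exactly $\delta_S^{2\varepsilon}\delta_R^\varepsilon(1+t)^\varepsilon\ln^{1-\varepsilon}(\cdots)$ and hence the claimed bound. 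Your write-up should state this convention and fix $\alpha=1$ explicitly; as written, ``traded for the small powers of $\delta_S,\delta_R,(1+t)$'' leaves the reader to guess the only admissible choice, and the naive reading (tracking $u_m$) would look like a gap.
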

	Note that $\delta_R$ is small. Thus we present an estimate for $\mathbf{J}(t)$ in the following lemma.
	\begin{lemma}\label{J}
		For $\mathbf{J}(t)$ is defined in $(\ref{j}),$ we have
		\begin{equation}
			\begin{aligned}
				\left|\mathbf{J}(t)\right|\leq \frac{1}{1800}\int_{\mathbb{R}}(\phi_{\xi}^{-\mathbf{X}})^2wd\xi+\frac{2u_m^3}{15}\int_{\mathbb{R}}(\phi^{-\mathbf{X}})^2u_{\xi}^Sd\xi+Cu_m^2\|\phi^{-\mathbf{X}}\|_{L^2(\mathbb{R})}^2\delta_R^{\frac{1}{3}}(1+t)^{-\frac{4}{3}}.
			\end{aligned}
		\end{equation}
	\end{lemma}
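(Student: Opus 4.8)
The plan is to use that the Oleinik profile $u^S$ increases from $u_-$ to $u_m$, so $u^S(\xi)<u_m$ for all $\xi$, while $w>0$ and $u_\xi^R>0$ by Lemma~\ref{lem1}(1); hence the integrand of $\mathbf J(t)$ is non-positive and
\[
|\mathbf J(t)|=3\int_{\mathbb{R}}(\phi^{-\mathbf{X}})^2(u_m-u^S)\,w\,u_\xi^R\,d\xi .
\]
The mechanism is the spatial separation of the two waves: $u^S$ makes its transition near $\xi=0$, whereas $u_\xi^R$ is concentrated (modulo exponentially small tails) in the fan $\{\xi\gtrsim0\}$. So I would split $\mathbb{R}=(-\infty,0]\cup(0,+\infty)$ and handle the two halves by different devices, using throughout $\frac{15}{8}u_m^2\le w<\frac{15}{2}u_m^2$ from \eqref{WFP}, the scaling $\delta_R=\delta_S^2$, and $u_m=\delta_S/3$.

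\textbf{Rarefaction half $\xi>0$.} Here one trades $u_m-u^S$ against $u_\xi^S$: the traveling-wave identity \eqref{1S}, the denominator bound \eqref{13}, and the lower bound $u^S-u_-\ge C_*\delta_S$ valid for $\xi>0$ (see \eqref{s+}) give $(u_m-u^S)^2\le\frac{2}{C_*\delta_S}u_\xi^S$ on $(0,+\infty)$. The Young inequality $(u_m-u^S)u_\xi^R\le\frac{\alpha}{2}(u_m-u^S)^2+\frac{1}{2\alpha}(u_\xi^R)^2$, with $\alpha\sim u_m\delta_S$ tuned so that the first term ends with coefficient exactly $\frac{2u_m^3}{15}$ (after $w<\frac{15}{2}u_m^2$), reduces the $\xi>0$ contribution to $\frac{2u_m^3}{15}\int_{\mathbb{R}}(\phi^{-\mathbf{X}})^2u_\xi^S\,d\xi+\frac{Cu_m}{\delta_S}\int_{\mathbb{R}}(\phi^{-\mathbf{X}})^2(u_\xi^R)^2\,d\xi$. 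For the last integral I would use $\int(\phi^{-\mathbf{X}})^2(u_\xi^R)^2\le\|\phi^{-\mathbf{X}}\|_{L^\infty}^2\|u_\xi^R\|_{L^2}^2$, the one-dimensional Sobolev inequality $\|\phi^{-\mathbf{X}}\|_{L^\infty}^2\le2\|\phi^{-\mathbf{X}}\|_{L^2}\|\phi_\xi^{-\mathbf{X}}\|_{L^2}$, and the interpolated form $\|u_\xi^R\|_{L^2}^2\le C\delta_R^{4/3}(1+t)^{-2/3}$ of Lemma~\ref{lem1}(2); one further Young inequality, together with $w\ge\frac{15}{8}u_m^2$ and $\delta_S^2=\delta_R$, splits this into $\frac{1}{3600}\int(\phi_\xi^{-\mathbf{X}})^2w\,d\xi$ and a remainder $\le Cu_m^2\|\phi^{-\mathbf{X}}\|_{L^2}^2\delta_R^{1/3}(1+t)^{-4/3}$ — the power of $\delta_R$ actually generated is $5/3>1/3$, leaving room.

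\textbf{Shock half $\xi\le0$.} On $(-\infty,0]$ the factor $u_\xi^R$ is exponentially small and time-decaying, and the clean way to cash this in is the wave-interaction bound of Lemma~\ref{WIE}, $\int_{-\infty}^{0}|u^S-u_m|\,u_\xi^R\,d\xi\le C\delta_S\,\delta_R^{\frac{2\varepsilon}{2+\varepsilon}}(1+t)^{-1+\varepsilon}$ with a fixed $\varepsilon\in(0,\frac13]$. Bounding $(\phi^{-\mathbf{X}})^2$ by $\|\phi^{-\mathbf{X}}\|_{L^\infty}^2$ and then running $\|\phi^{-\mathbf{X}}\|_{L^\infty}^2$ through the Sobolev inequality and a Young inequality, the $\xi\le0$ contribution becomes $\le\frac{1}{3600}\int(\phi_\xi^{-\mathbf{X}})^2w\,d\xi$ plus a constant times $\big(u_m^2\delta_S\delta_R^{\frac{2\varepsilon}{2+\varepsilon}}(1+t)^{-1+\varepsilon}\big)^2\|\phi^{-\mathbf{X}}\|_{L^2}^2$; squaring converts $(1+t)^{-1+\varepsilon}$ into $(1+t)^{-2+2\varepsilon}\le(1+t)^{-4/3}$ (the sole place $\varepsilon\le\frac13$ is needed), and with $\delta_S^2=\delta_R$ this too is $\le Cu_m^2\|\phi^{-\mathbf{X}}\|_{L^2}^2\delta_R^{1/3}(1+t)^{-4/3}$. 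Summing the two halves yields the stated inequality.

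\textbf{Expected difficulty.} All the analysis needed — the profile ODE \eqref{1S}, the wave-interaction estimates of Lemma~\ref{WIE}, and the $L^p$-decay of $u_\xi^R$ from Lemma~\ref{lem1} — is already available, so the real labour is the constant bookkeeping: the coefficients $\frac{2u_m^3}{15}$ of $\int(\phi^{-\mathbf{X}})^2u_\xi^S$ and $\frac{1}{1800}$ of the dissipation are precisely what the energy closure in Lemma~\ref{lem2} can absorb. Hence every Young parameter and every appeal to the sharp bounds $w<\frac{15}{2}u_m^2$, $w\ge\frac{15}{8}u_m^2$, $|w'|\le\frac{5}{2}u_m$ has to be tracked, and the smallness of $\delta_S$ with $\delta_R=\delta_S^2$ must be used at each step so that the leftover powers of $\delta_R$ stay strictly below the right-hand side of the lemma.
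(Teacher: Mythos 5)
Your proposal is correct, and it takes a genuinely different route from the paper's. The paper does not split the spatial domain: it directly bounds
\[
|\mathbf{J}(t)|\le 3\|\phi^{-\mathbf{X}}\sqrt{w}\|_{L^\infty}^2\int_{\mathbb{R}}|u^S-u_m|\,u_\xi^R\,d\xi,
\]
then applies Gagliardo--Nirenberg to $\|\phi^{-\mathbf{X}}\sqrt{w}\|_{L^\infty}^2$, uses Minkowski to split $\|(\phi^{-\mathbf{X}}\sqrt{w})_\xi\|_{L^2}$ into $\|\phi_\xi^{-\mathbf{X}}\sqrt{w}\|_{L^2}+\frac{\sqrt{30}}{6}\|\phi^{-\mathbf{X}}u_\xi^S\|_{L^2}$ (the second term, via $\|u_\xi^S\|_{L^\infty}\le28u_m^3$, is the source of the $\frac{2u_m^3}{15}\int(\phi^{-\mathbf{X}})^2u_\xi^S$ coefficient), then applies Young and finally squares $\int_{\mathbb{R}}|u^S-u_m|u_\xi^R$ using both halves of the wave-interaction Lemma~\ref{WIE} at $\varepsilon=\tfrac13$, getting $C\delta_R^{1/3}(1+t)^{-4/3}$. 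You instead split $\mathbb{R}$ at $\xi=0$: on $\xi>0$ you bypass Lemma~\ref{WIE} entirely by invoking the degenerate-profile ODE \eqref{1S} together with the lower bound $u^S-u_-\ge C_*\delta_S$ from \eqref{s+} to get $(u_m-u^S)^2\lesssim u_\xi^S/\delta_S$, and then use the $L^2$-decay of $u_\xi^R$ from Lemma~\ref{lem1}(2); on $\xi\le0$ you use only the $(-\infty,0]$ bound of Lemma~\ref{WIE}. The $\frac{2u_m^3}{15}$ term is thus manufactured from the profile ODE rather than from the $w'$-correction in the Minkowski step, and the Sobolev embedding is applied to plain $\|\phi^{-\mathbf{X}}\|_{L^\infty}$ rather than to $\|\phi^{-\mathbf{X}}\sqrt{w}\|_{L^\infty}$, which neatly avoids differentiating $\sqrt{w}$. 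Both routes close: your leftover powers $\delta_R^{5/3}$ (right half) and $u_m^2\delta_R^{11/7}$ (left half, after $\delta_S^2=\delta_R$) do sit below the target $Cu_m^2\delta_R^{1/3}$ once $\delta_R=9u_m^2$ is inserted. The trade-off is that your right-half argument leans on the specific structure $(u_m-u^S)^2\sim u_\xi^S/\delta_S$ of the degenerate Oleinik profile and on the $p=2$ interpolation in Lemma~\ref{lem1}(2), whereas the paper's is agnostic to the profile ODE and needs only the pointwise wave-interaction bounds; the paper's version would transport more directly to a non-degenerate profile, while yours is arguably more self-contained on the shock side.
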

	\begin{proof}
		\begin{equation}
			\begin{aligned}
				\left|\mathbf{J}(t)\right|=&\left|3 \int_{\mathbb{R}} (\phi^{-\mathbf{X}})^2 (u^S - u_m) w u_\xi^R d\xi\right|\\
				\leq&3\left\lVert\phi^{-\mathbf{X}}\sqrt{w}\right\rVert_{L^{\infty}(\mathbb{R})}^2\int_{\mathbb{R}}\left|u^S-u_m\right|u_{\xi}^Rd\xi\\
				\leq&C\left\lVert\left(\phi^{-\mathbf{X}}\sqrt{w}\right)_{\xi}\right\rVert_{L^{2}(\mathbb{R})}\cdot\left\lVert\phi^{-\mathbf{X}}\sqrt{w}\right\rVert_{L^{2}(\mathbb{R})}\int_{\mathbb{R}}\left|u^S-u_m\right|u_{\xi}^Rd\xi.
			\end{aligned}
		\end{equation}
		By the Minkowski inequality, we have \begin{align*}
			\left\lVert\left(\phi^{-\mathbf{X}}\sqrt{w}\right)_{\xi}\right\rVert_{L^{2}(\mathbb{R})}
			\leq&\left\lVert\phi_{\xi}^{-\mathbf{X}}\sqrt{w}\right\rVert_{L^{2}(\mathbb{R})}+\left\lVert\phi^{-\mathbf{X}}\frac{w'}{2\sqrt{w}}u_{\xi}^S\right\rVert_{L^{2}(\mathbb{R})}\\
			\leq&\left\lVert\phi_{\xi}^{-\mathbf{X}}\sqrt{w}\right\rVert_{L^{2}(\mathbb{R})}+\frac{\sqrt{30}}{6}\left\lVert\phi^{-\mathbf{X}}u_{\xi}^S\right\rVert_{L^{2}(\mathbb{R})}.
		\end{align*}
		Hence, we have 
		\begin{equation}\label{J1}
			\begin{aligned}
				\left|\mathbf{J}(t)\right|\leq &C\left\lVert\phi_{\xi}^{-\mathbf{X}}\sqrt{w}\right\rVert_{L^{2}(\mathbb{R})}\left\lVert\phi^{-\mathbf{X}}\sqrt{w}\right\rVert_{L^{2}(\mathbb{R})}\int_{\mathbb{R}}\left|u^S-u_m\right|u_{\xi}^Rd\xi\\
				&+C\left\lVert\phi^{-\mathbf{X}}u_{\xi}^S\right\rVert_{L^{2}(\mathbb{R})}\left\lVert\phi^{-\mathbf{X}}\sqrt{w}\right\rVert_{L^{2}(\mathbb{R})}\int_{\mathbb{R}}\left|u^S-u_m\right|u_{\xi}^Rd\xi\\
				\leq&\frac{1}{1800}\left\lVert\phi_{\xi}^{-\mathbf{X}}\sqrt{w}\right\rVert_{L^{2}(\mathbb{R})}^2+\frac{1}{210}\left\lVert\phi^{-\mathbf{X}}u_{\xi}^S\right\rVert_{L^{2}(\mathbb{R})}^2\\
				&+C\left\lVert\phi^{-\mathbf{X}}\sqrt{w}\right\rVert_{L^{2}(\mathbb{R})}^2\left(\int_{\mathbb{R}}\left|u^S-u_m\right|u_{\xi}^Rd\xi\right)^2\\
				\leq&\frac{1}{1800}\int_{\mathbb{R}}(\phi_{\xi}^{-\mathbf{X}})^2wd\xi+\frac{2u_m^3}{15}\int_{\mathbb{R}}(\phi^{-\mathbf{X}})^2u_{\xi}^Sd\xi\\
				&+Cu_m^2\lVert\phi^{-\mathbf{X}}\rVert_{L^{2}(\mathbb{R})}^2\left(\int_{\mathbb{R}}\left|u^S-u_m\right|u_{\xi}^Rd\xi\right)^2,
			\end{aligned}
		\end{equation}
		Applying Lemma $\ref{WIE},$ let $\varepsilon=\frac{1}{3}$ and $\delta_{R}=\delta_{S}^2$, we get
		\begin{equation}\label{J2}
			\begin{aligned}
				\left(\int_{\mathbb{R}}\left|u^S-u_m\right|u_{\xi}^Rd\xi\right)^2=&\left(\int_{-\infty}^0\left|u^S-u_m\right|u_{\xi}^Rd\xi+\int_{0}^{+\infty}\left|u^S-u_m\right|u_{\xi}^Rd\xi\right)^2\\
				\leq&C\left(\delta_{R}^{\frac{2}{7}}+\delta_R^{\frac{1}{6}}\right)^2(1+t)^{-\frac{4}{3}}
				\leq C\delta_{R}^{\frac{1}{3}}(1+t)^{-\frac{4}{3}}.
			\end{aligned}
		\end{equation}
		This completes the proof of Lemma $\ref{J}$. 
	\end{proof}
	Before estimating $\mathbf{Z}(t)$, we give this lemma to estimate $\partial_{\xi}^kF,\ k=0,1,2$:
	\begin{lemma}\label{lemf}
		There exists a positive constant $C$, such that for $k=0,1,2$
		\begin{equation}\label{kf}
			\int_0^t\int_{\mathbb{R}}\left|\partial_{\xi}^kF\right|^2d\xi dt\leq Cu_m\delta_R^{\frac{1}{6}}= C\delta_{R}^{\frac{2}{3}}.
		\end{equation}
	\end{lemma}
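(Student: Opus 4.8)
The plan is to work directly from the explicit form of the error term. Since the $L^2(\mathbb{R})$ norm in $\xi$ is invariant under the shift $\xi\mapsto\xi-\mathbf{X}(t)$, it suffices to bound $\int_0^t\int_{\mathbb{R}}|\partial_\xi^kF|^2\,d\xi\,ds$ for $F=\bigl(f(\tilde u)-f(u^S)-f(u^R)\bigr)_\xi-u^R_{\xi\xi}$, where throughout $u^R=u^R(1+s,\xi+\sigma s;u_m,u_+)$. With $f(u)=u^3$ and $\tilde u=u^S+u^R-u_m$ a direct computation gives
\begin{equation*}
f(\tilde u)-f(u^S)-f(u^R)=-u_m^3+6u_m(u^S-u_m)(u^R-u_m)+3(u^S-u_m)(u^R-u_m)(u^S+u^R-2u_m),
\end{equation*}
so the constant drops under $\partial_\xi$ and, after the Leibniz rule, $\partial_\xi^kF$ is the sum of $-\partial_\xi^{k+2}u^R$ and finitely many \emph{interaction terms}, each being a product of at least one shock factor ($u^S-u_m$, or some $\partial_\xi^{j}u^S$ with $j\ge1$), at least one rarefaction factor ($u^R-u_m$, or some $\partial_\xi^{j}u^R$ with $j\ge1$), and bounded factors (powers of $u^S,u^R$).

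First I would dispose of the residual $-\partial_\xi^{k+2}u^R$: by Lemma~\ref{lem1}(2), applied at the time $1+s\ge1$, one has $\int_0^t\|\partial_\xi^{k+2}u^R\|_{L^2}^2\,ds\le C\int_0^\infty\min\{\delta_R,(1+s)^{-1}\}^2\,ds\le C\delta_R\le Cu_m\delta_R^{1/6}$ (using $\delta_R=\delta_S^2$). For the interaction terms the range $s\in[0,1]$ contributes only $O(\delta_R^2)$, since there every such term is pointwise $O(u_m\delta_S\delta_R)$; so I would restrict to $s\ge1$ and use the decay estimates of Lemma~\ref{lem0}, Lemma~\ref{lem1} and Lemma~\ref{WIE}, together with $|\partial_\xi^ju^S|\le C\delta_S^{2(j-1)}u^S_\xi$, $\|\partial_\xi^{j}u^S\|_{L^\infty}\le C\delta_S^3$, $\|u^R-u_m\|_{L^\infty}\le\delta_R$ and $\|\partial_\xi^{j}u^R(1+s)\|_{L^\infty}\le C(1+s)^{-1}$. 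Any term carrying a rarefaction derivative $\partial_\xi^{\ge1}u^R$ is then controlled by pulling out $\|\partial_\xi^{\ge1}u^R(1+s)\|_{L^\infty}\le C(1+s)^{-1}$ and reducing the remaining integral — via one of the wave-interaction bounds of Lemma~\ref{WIE} or via integration by parts — to a quantity that decays like $(1+s)^{-1+\varepsilon}$; the term thus contributes a time density $\lesssim(1+s)^{-2+\varepsilon}$, which is integrable, and the accompanying $\delta_S$- and $\delta_R$-powers push the total well below $Cu_m\delta_R^{1/6}$.

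The one delicate case — which I expect to be the main obstacle — is a term whose rarefaction factor is the \emph{undifferentiated} $u^R-u_m$, so that (since $k+1\ge1$) a shock derivative $\partial_\xi^{\ge1}u^S$ must be present; the representative is $6u_m\,u^S_\xi\,(u^R-u_m)$ at $k=0$. Here $u^R-u_m$ is not small in $L^\infty$ (it is of size $\delta_R$ well inside and to the right of the fan), and the naive route through Lemma~\ref{WIE} produces a time integral of order $\int_0^t(1+s)^{-1+\varepsilon}\,ds$ (or $\int_0^t(1+C\delta_S^2\delta_R s)^{-1}\,ds$), which diverges. Instead the plan is to split $\mathbb{R}$ in $\xi$ into the shock side $\{\xi\lesssim0\}$, the fan $\{0\lesssim\xi\le(\lambda_+-\sigma)(1+s)\}$ and the far field $\{\xi\gtrsim(\lambda_+-\sigma)(1+s)\}$. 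On the shock side $|u^R-u_m|\le C_\varepsilon\delta_R^{2\varepsilon/(2+\varepsilon)}(1+s)^{-1+\varepsilon}e^{-\varepsilon|\xi-\sigma|}$ by Lemma~\ref{lem1}(5), so the squared contribution decays like $(1+s)^{-2+2\varepsilon}$; in the far field $|u^R-u_+|$ decays exponentially by Lemma~\ref{lem1}(4) while $u^S_\xi$ is algebraically tiny on the scale $\xi\sim\delta_S^3s$, giving a density $\lesssim\delta_R^2\delta_S^4(1+C\delta_S^5s)^{-3}$. In the fan I would write $|u^R-u_m|\le|u^R-u^r|+|u^r-u_m|$: the first piece is $\le C_\varepsilon\delta_R^{\varepsilon}(1+s)^{-1+\varepsilon}$ by Lemma~\ref{lem1}(6) and, using $\int_{\mathbb{R}}u^S_\xi\,d\xi\le\delta_S$, contributes a density $\lesssim\delta_S^4\delta_R^{2\varepsilon}(1+s)^{-2+2\varepsilon}$; the second piece is handled with the elementary bound $|u^r-u_m|=\bigl|\sqrt{u_m^2+\tfrac{\xi-\sigma}{3(1+s)}}-u_m\bigr|\le\tfrac{C\xi}{u_m(1+s)}$ — valid throughout the fan since there $\tfrac{\xi-\sigma}{u_m^2(1+s)}\lesssim\delta_R/u_m\ll1$ — by keeping $u^S_\xi$ inside the integral,
\begin{equation*}
\int_{\mathrm{fan}}|u^S_\xi|^2|u^r-u_m|^2\,d\xi\le\frac{C}{u_m^2(1+s)^2}\int_0^{(\lambda_+-\sigma)(1+s)}|u^S_\xi|^2\xi^2\,d\xi .
\end{equation*}
Since $\int_0^\infty|u^S_\xi|^2\xi^2\,d\xi<\infty$ while $\int_0^{\Lambda}|u^S_\xi|^2\xi^2\,d\xi\le C\delta_S^6\Lambda^3$ for $\Lambda\lesssim\delta_S^{-2}$, and $\Lambda=(\lambda_+-\sigma)(1+s)\sim\delta_S^3(1+s)$, this density is bounded by $\min\{Cu_m^{-2}(1+s)^{-2},\,C\delta_S^{13}(1+s)\}$, whose integral over $s\in[1,\infty)$ is $\le C\delta_S^3$; multiplying by the prefactor $36u_m^2$ yields $\le C\delta_S^5\ll\delta_R^{2/3}$. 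The same three-region scheme applies verbatim to $k=1,2$, where the higher shock derivatives $\partial_\xi^{\ge2}u^S$ carry still more smallness and $\partial_\xi^{\ge3}u^R$ is controlled by Lemma~\ref{lem1}(2) exactly as the residual above; summing all contributions gives \eqref{kf}. The heart of the matter is thus the borderline time-integrability in the fan, which forces one to abandon the pointwise bound $|u^R-u_m|\lesssim\delta_R$ and to exploit simultaneously the \emph{linear} vanishing of $u^r-u_m$ at the left edge of the fan and the algebraic localization of $\partial_\xi^{\ge1}u^S$.
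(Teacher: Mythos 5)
Your plan is correct and follows essentially the same route as the paper: the paper likewise isolates $\int_{\mathbb{R}}|u^R-u_m|^2(u^S_\xi)^2\,d\xi$ as the delicate interaction term, splits $\mathbb{R}$ into the shock side $\xi<0$, the fan $[0,(f'(u_+)-\sigma)(1+t)]$, and the far field, and inside the fan combines the linear vanishing $|u^r-u_m|\lesssim \xi/(u_m(1+t))$ with the algebraic decay $u^S_\xi\lesssim\delta_S^3/(1+C\delta_S^2\xi)^2$, exactly as you propose. The only difference is cosmetic bookkeeping of the fan integral — the paper reduces it to $C(1+t)^{-2}\int_0^\Lambda(C\xi+1)^{-1}d\xi\le C(1+t)^{-2}\ln(1+C\delta_R t)$ rather than your $\min$ of two asymptotic regimes — and in both treatments this piece, like the other interaction terms, is sub-dominant to the residual $\int|\partial_\xi^{k+2}u^R|^2$ that fixes the stated $Cu_m\delta_R^{1/6}$ bound.
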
 
	\begin{proof}
		Notice that 
		\begin{align*}
			\partial_{\xi}^kF&= \partial_{\xi}^{k+1}\left[f(\tilde{u}) - f(u^R) - f((u^S)_{\xi})\right]- \partial_{\xi}^{k+2}(u^R)\\
			&= \partial_{\xi}^k\left[\left(f'(\tilde{u}) - f'(u^S)\right)(u^S)_{\xi}+ \left(f'(\tilde{u}) - f'(u^R)\right)(u^R)_{\xi}\right] -  \partial_{\xi}^{k+2}(u^R).
		\end{align*}
		If $k=0,$ we get
		\begin{equation*}
			\int_{\mathbb{R}} |F|^2 d\xi \leq C(u_m^2+\delta_{R}^2) \int_{\mathbb{R}} |u^S - u_m|^2 (u_\xi^R)^2 d\xi + C(u_m^2+\delta_{R}^2) \int_{\mathbb{R}} |u^R - u_m|^2 (u_\xi^S)^2 d\xi + C \int_{\mathbb{R}} |u_{\xi\xi}^R|^2 d\xi.
		\end{equation*}
		By the Lemma $\ref{lem1}$ and $\ref{WIE}$,  we get
		\begin{align*}
			\int_{\mathbb{R}} |u^S - u_m|^2 (u_{\xi}^R)^2 d\xi &\leq C \|u_{\xi}^R\|_{L^{\infty}(\mathbb{R})} \int_{\mathbb{R}} |u^S - u_m| u_{\xi}^R d\xi \\
			&\leq C \delta_R^{\frac{3}{5}}(1+t)^{-\frac{2}{5}} \delta_R^{\frac{1}{6}}(1+t)^{-\frac{2}{3}}= C\delta_R^{\frac{23}{30}} (1+t)^{-\frac{6}{5}},
		\end{align*}
		and
		\begin{align}\label{rms}
			&\int_{\mathbb{R}} |u^R - u_m|^2 (u_{\xi}^S)^2 d\xi \nonumber\\
			\leq& \int_{-\infty}^{0} |u^R - u_m|^2 (u_{\xi}^S)^2 d\xi + \int_{0}^{[f'(u_+)-\sigma](1+t)} |u^R - u_m|^2 (u_{\xi}^S)^2 d\xi \  \\
			&+ \int_{[f'(u_+)-\sigma](1+t)}^{+\infty} |u^R - u_m|^2 (u_{\xi}^S)^2 d\xi.\nonumber
		\end{align}
		By Lemma $\ref{lem1}$, we have
		\begin{align*}
			\int_{-\infty}^{0} |u^R - u_m|^2 (u_\xi^S)^2 d\xi \leq C \delta_R^{\frac{4}{7}} (1+t)^{-\frac{4}{3}}.
		\end{align*}
		For $\int_{0}^{[f'(u_{+})-\sigma](1+t)} |u^{R}-u_{m}|^{2}(u_{\xi}^{S})^{2}d\xi$, we have
		\begin{align*}
			&\int_{0}^{[f'(u_{+})-\sigma](1+t)} |u^{R}-u_{m}|^{2}(u_{\xi}^{S})^{2}d\xi\\
			&\leq \int_{0}^{[f'(u_{+})-\sigma](1+t)} |u^{R}-u^{r}|^{2}(u_{\xi}^{S})^{2}d\xi + \int_{0}^{[f'(u_{+})-\sigma](1+t)} |u^{r}-u_{m}|^{2}(u_{\xi}^{S})^{2}d\xi.
		\end{align*}
		By the Lemma $\ref{lem1}$, we get
		\begin{align*}
			&\int_{0}^{[f'(u_{+})-\sigma](1+t)} |u^{R}-u^{r}|^{2}(u_{\xi}^{S})^{2}d\xi\\
			\leq&\int_{0}^{[f'(u_{+})-\sigma](1+t)}|u^{R}(1+t,\xi+\sigma t)-u^{R}(1+t, \xi+\sigma(1+t))|^{2}(u_{\xi}^{S})^{2}d\xi\\
			&+\int_{0}^{[f'(u_{+})-\sigma](1+t)}|u^{R}(1+t, \xi+\sigma(1+t))-u^r|^{2}(u_{\xi}^{S})^{2}d\xi\\
			\leq& C\|u_{\xi}^R\|_{L^{\infty}(\mathbb{R})}^{2} \int_{0}^{+\infty} (u_{\xi}^S)^{2} d\xi + C_{\varepsilon} \delta_{R}^{2\varepsilon} (1+t)^{-2+2\varepsilon} \int_{0}^{+\infty} (u_{\xi}^S)^{2} d\xi\\
			\leq& C \delta_{R}^{\frac{2}{3}} (1+t)^{-\frac{4}{3}},
		\end{align*}
		and
		\begin{equation*}
			\begin{aligned}
				&\int_{0}^{[f'(u_{+})-\sigma](1+t)}|u^{r}-u_{m}|^{2}(u_{\xi}^{S})^{2}d\xi\\
				=&\int_{0}^{[f'(u_{+})-\sigma](1+t)}\left|(f')^{-1}\left(\frac{\xi+\sigma(1+t)}{1+t}\right)-(f')^{-1}(\sigma)\right|^{2}(u_{\xi}^{S})^{2}d\xi\\
				\leq& C\int_{0}^{[f'(u_{+})-\sigma](1+t)}\frac{\xi^{2}\delta_{S}^{6}}{(1+t)^{2}(C\delta_{S}^{2}\xi+1)^{4}}d\xi\\
				\leq& C\frac{1}{(1+t)^{2}}\int_{0}^{[f'(u_{+})-\sigma](1+t)}\frac{1}{C\xi+1}d\xi\\
				\leq& C\frac{1}{(1+t)^{2}}\ln(1+C\delta_{R}t)\leq C\delta_R^{\frac{2}{3}}\frac{\ln^{\frac{1}{3}}(1+C\delta_{R}t)}{(1+t)^{\frac{4}{3}}}.
			\end{aligned}
		\end{equation*}
		So, we get
		\begin{align*}
			\int_{0}^{[f'(u_{+})-\sigma](1+t)} |u^{R}-u_{m}|^{2}(u_{\xi}^{S})^{2}d\xi\leq C \delta_{R}^{\frac{2}{3}} (1+t)^{-\frac{4}{3}}.
		\end{align*}
		For $\int_{[f'(u_{+})-\sigma](1+t)}^{+\infty} |u^{R}-u_{m}|^{2} (u_{\xi}^{S})^{2} d\xi$, we have
		\begin{align*}
			&\int_{[f'(u_{+})-\sigma](1+t)}^{+\infty} |u^{R}-u_{m}|^{2} (u_{\xi}^{S})^{2} d\xi \\
			&\leq \delta_{R}^{2} \|u_{\xi}^{S}\|_{L^{\infty}([f'(u_{+})-\sigma](1+t), +\infty)} \int_{[f'(u_{+})-\sigma](1+t)}^{+\infty} u_{\xi}^{S} d\xi
			\leq  \frac{C\delta_{R}^{2}}{(1+C\delta_{R}t)^{2}}.
		\end{align*} 
		Substituting the above inequalities into $(\ref{rms})$, we get
		\begin{align*}
			\int_{\mathbb{R}} |u^R - u_m|^2 (u_{\xi}^S)^2 d\xi\leq C\delta_R^{\frac{4}{7}}(1+t)^{-\frac{4}{3}},
		\end{align*}
		and
		\begin{equation*} 
			\int_{\mathbb{R}} |u_{\xi \xi}^{R}|^2 d\xi \leq C \|u_{\xi \xi}^{R}\|_{L^\infty(\mathbb{R})}^{\frac{2}{3}} \|u_{\xi \xi}^{R}\|_{L^\infty(\mathbb{R})}^{\frac{1}{3}} (1+t)^{-1} \leq C\delta_R^{\frac{2}{3}} (1+t)^{-\frac{4}{3}}=Cu_m\delta_{R}^{\frac{1}{6}}(1+t)^{-\frac{4}{3}}.
		\end{equation*}
		Hence, we have
		\begin{align*}
			\int_{\mathbb{R}} |F|^2 d\xi\leq Cu_m\delta_R^{\frac{1}{6}}(1+t)^{-\frac{4}{3}}.
		\end{align*}\par
		If $k=1,$ we notice that
		\begin{align*}
			F_{\xi}=&\left(f''(\tilde{u})-f''(u^S)\right)(u^S_{\xi})^2+\left(f''(\tilde{u})-f''(u^R)\right)(u^R_{\xi})^2+2f''(\tilde{u})u^S_{\xi}u^R_{\xi}\\
			&+\left(f'(\tilde{u})-f'(u^R)\right)u^R_{\xi\xi}+\left(f'(\tilde{u})-f'(u^S)\right)u^S_{\xi\xi}-u^R_{\xi\xi\xi}\\
			=&6(u^R-u_m)(u_{\xi}^S)^2+6(u^S-u_m)(u^R_{\xi})^2+12(u^S+u^R-u_m)u_{\xi}^Su_{\xi}^R\\
			&+3(u^S-u_m)(u^S+2u^R-u_m)u_{\xi\xi}^R+3(u^R-u_m)(2u^S+u^R-u_m)u_{\xi\xi}^S-u_{\xi\xi\xi}^R.
		\end{align*}
		By $(a+b)^2\leq2a^2+2b^2$, we get
		\begin{equation}\label{Fxi}
			\begin{aligned}
				&\int_{\mathbb{R}}\left|F_{\xi}\right|^2d\xi
				\leq C\int_{\mathbb{R}}(u^R-u_m)^2(u_{\xi}^S)^4d\xi+C\int_{\mathbb{R}}(u^S-u_m)^2(u^R_{\xi})^4d\xi\\
				&\ \ +C\int_{\mathbb{R}}(u^S+u^R-u_m)^2(u_{\xi}^Su_{\xi}^R)^2d\xi+C\int_{\mathbb{R}}(u^S-u_m)^2(u^S+2u^R-u_m)^2(u_{\xi\xi}^R)^2d\xi\\
				&\ \ +C\int_{\mathbb{R}}(u^R-u_m)^2(2u^S+u^R-u_m)^2(u_{\xi\xi}^S)^2d\xi+C\int_{\mathbb{R}}(u^R_{\xi\xi\xi})^2d\xi.
			\end{aligned}
		\end{equation}
		It is easy to check that
		\begin{align*}
			&\int_{\mathbb{R}}(u^R-u_m)^2(u_{\xi}^S)^4d\xi\leq Cu_m^6\int_{\mathbb{R}}(u^R-u_m)^2(u_{\xi}^S)^2d\xi\leq Cu_m^6\delta_R^{\frac{4}{7}}(1+t)^{-\frac{4}{3}},\\
			&\int_{\mathbb{R}}(u^S-u_m)^2(u^R_{\xi})^4d\xi\leq C\delta_{R}^2\int_{\mathbb{R}}(u^S-u_m)^2(u^R_{\xi})^2d\xi\leq Cu_m^4\delta_R^{\frac{43}{55}} (1+t)^{-\frac{6}{5}},\\
			&\int_{\mathbb{R}}(u^R-u_m)^2(2u^S+u^R-u_m)^2(u_{\xi\xi}^S)^2d\xi\leq Cu_m^4\int_{\mathbb{R}}\int_{\mathbb{R}}(u^R-u_m)^2(u_{\xi}^S)^2d\xi\leq Cu_m^4\delta_R^{\frac{4}{7}}(1+t)^{-\frac{4}{3}}.
		\end{align*}
		For $\int_{\mathbb{R}}(u^S+u^R-u_m)^2(u_{\xi}^Su_{\xi}^R)^2d\xi$, we get
		\begin{align*}
			\int_{\mathbb{R}}(u^S+u^R-u_m)^2(u_{\xi}^Su_{\xi}^R)^2d\xi&\leq C(u_m^2+\delta_{R}^2)\int_{\mathbb{R}}(u_{\xi}^Su_{\xi}^R)^2d\xi\leq Cu_m^2\lVert u_{\xi}^R\rVert_{L^{\infty}(\mathbb{R})}\int_{\mathbb{R}}(u_{\xi}^S)^{\frac{1}{5}}u_{\xi}^Rd\xi.
		\end{align*}
		Notice that $\int_{\mathbb{R}}(u_{\xi}^S)^{\frac{1}{5}}u_{\xi}^Rd\xi=\int_{-\infty}^0(u_{\xi}^S)^{\frac{1}{5}}u_{\xi}^Rd\xi+\int_{0}^{+\infty}(u_{\xi}^S)^{\frac{1}{5}}u_{\xi}^Rd\xi$, by Lemma $\ref{lem1}$, we have $$\lVert u_{\xi}^R\rVert_{L^{\infty}(\mathbb{R})}\leq C(1+t)^{-1}$$ 
		and
		\begin{align*}
			\int_{-\infty}^0(u_{\xi}^S)^{\frac{1}{5}}u_{\xi}^Rd\xi\leq C\int_{-\infty}^0u_{\xi}^Rd\xi=C\left|u^R(1+t,\sigma t)-u_m\right|\leq C\delta_R^{\frac{4}{7}}(1+t)^{-\frac{1}{5}}.
		\end{align*}
		By H$\ddot{\text{o}}$ler inequality, we get 
		\begin{align*}
			\int_{0}^{+\infty}(u_{\xi}^S)^{\frac{1}{5}}u_{\xi}^Rd\xi&\leq \left(\int_{0}^{+\infty}\left((u_{\xi}^S)^{\frac{1}{5}}\right)^5d\xi\right)^{\frac{1}{5}}\left(\int_{0}^{+\infty}(u_{\xi}^R)^{\frac{5}{4}}d\xi\right)^{\frac{4}{5}}\leq C\delta_{S}^{\frac{9}{5}}\delta_R^{\frac{4}{5}}(1+t)^{-\frac{1}{5}}.
		\end{align*}
		Hence, we have
		\begin{align*}
			\int_{\mathbb{R}}(u^S+u^R-u_m)^2(u_{\xi}^Su_{\xi}^R)^2d\xi\leq Cu_m^2\delta_R^{\frac{4}{7}}(1+t)^{-\frac{6}{5}}.
		\end{align*}
		By Lemma $(\ref{lem1})$, we get
		\begin{align*}
			&\int_{\mathbb{R}}(u^S-u_m)^2(u^S+2u^R-u_m)^2(u_{\xi\xi}^R)^2d\xi\leq Cu_m^2\int_{\mathbb{R}}(u_{\xi\xi}^R)^2d\xi\leq Cu_m^2\delta_R^{\frac{2}{3}} (1+t)^{-\frac{4}{3}},\\
			&\int_{\mathbb{R}}(u^R_{\xi\xi\xi})^2d\xi\leq C \|u_{\xi\xi \xi}^{R}\|_{L^\infty(\mathbb{R})}^{\frac{2}{3}} \|u_{\xi \xi\xi}^{R}\|_{L^\infty(\mathbb{R})}^{\frac{1}{3}} (1+t)^{-1} \leq C\delta_R^{\frac{2}{3}} (1+t)^{-\frac{4}{3}}=Cu_m\delta_R^{\frac{1}{6}} (1+t)^{-\frac{4}{3}}.
		\end{align*} 
		Then, we get 
		\begin{align*}
			\int_{\mathbb{R}}\left|F_{\xi}\right|^2d\xi\leq Cu_m\delta_R^{\frac{1}{6}}(1+t)^{-\frac{6}{5}}.
		\end{align*}\par
		If $k=2$, we notice that 
		\begin{align*}
			F_{\xi\xi}=&18(u_{\xi}^S)^2u_{\xi}^R+18u_{\xi}^S(u_{\xi}^R)^2+18(u^S-u_m)u_{\xi}^Ru_{\xi\xi}^R+18(u^R-u_m)u_{\xi}^Su_{\xi\xi}^S\\
			&+18(u^S+u^R-u_m)u_{\xi}^Su_{\xi\xi}^R+18(u^S+u^R-u_m)u_{\xi}^Ru_{\xi\xi}^S-u_{\xi\xi\xi\xi}^R\\
			&+3(u^S-u_m)(u^S+2u^R-u_m)u_{\xi\xi\xi}^R+3(u^R-u_m)(2u^S+u^R-u_m)u_{\xi\xi\xi}^S.
		\end{align*}
		Therefore, we have
		\begin{align*}
			&\int_{\mathbb{R}}(F_{\xi\xi})^2d\xi
			\leq C\int_{\mathbb{R}}(u_{\xi}^S)^4(u_{\xi}^R)^2d\xi+C\int_{\mathbb{R}}(u_{\xi}^S)^2(u_{\xi}^R)^4d\xi+C\int_{\mathbb{R}}(u^S-u_m)^2(u_{\xi}^R)^2(u_{\xi\xi}^R)^2d\xi\\
			&+C\int_{\mathbb{R}}(u^R-u_m)^2(u_{\xi}^S)^2(u_{\xi\xi}^S)^2d\xi+C\int_{\mathbb{R}}(u^S+u^R-u_m)^2(u_{\xi}^S)^2(u_{\xi\xi}^R)^2d\xi\\
			&+C\int_{\mathbb{R}}(u^S+u^R-u_m)^2(u_{\xi}^R)^2(u_{\xi\xi}^S)^2d\xi+C\int_{\mathbb{R}}(u^S-u_m)^2(u^S+2u^R-u_m)^2(u_{\xi\xi\xi}^R)^2d\xi\\
			&+C\int_{\mathbb{R}}(u^R-u_m)^2(2u^S+u^R-u_m)^2(u_{\xi\xi\xi}^S)^2d\xi+C\int_{\mathbb{R}}(u_{\xi\xi\xi\xi}^R)^2d\xi.
		\end{align*}
		Clearly,
		\begin{align*}
			&\int_{\mathbb{R}}(u_{\xi}^S)^4(u_{\xi}^R)^2d\xi\leq Cu_m^6\int_{\mathbb{R}}(u_{\xi}^S)^2(u_{\xi}^R)^2d\xi\leq Cu_m^6\delta_R^{\frac{4}{7}}(1+t)^{-\frac{6}{5}},\\
			&\int_{\mathbb{R}}(u_{\xi}^S)^2(u_{\xi}^R)^4d\xi\leq C\delta_{R}^2\int_{\mathbb{R}}(u_{\xi}^S)^2(u_{\xi}^R)^2d\xi\leq Cu_m^4\delta_R^{\frac{4}{7}}(1+t)^{-\frac{6}{5}},\\
			&\int_{\mathbb{R}}(u^S-u_m)^2(u_{\xi}^R)^2(u_{\xi\xi}^R)^2d\xi\leq C\delta_{R}^2\int_{\mathbb{R}}(u^S-u_m)^2(u_{\xi}^R)^2d\xi\leq Cu_m^4\delta_R^{\frac{43}{55}} (1+t)^{-\frac{6}{5}},\\
			&\int_{\mathbb{R}}(u^R-u_m)^2(u_{\xi}^S)^2(u_{\xi\xi}^S)^2d\xi\leq Cu_m^{10}\int_{\mathbb{R}}(u^R-u_m)^2(u_{\xi}^S)^2d\xi\leq Cu_m^{10}\delta_R^{\frac{4}{7}}(1+t)^{-\frac{4}{3}},\\
			&\int_{\mathbb{R}}(u^S+u^R-u_m)^2(u_{\xi}^S)^2(u_{\xi\xi}^R)^2d\xi\leq Cu_m^6\int_{\mathbb{R}}(u_{\xi\xi}^R)^2d\xi\leq Cu_m^6\delta_R^{\frac{2}{3}} (1+t)^{-\frac{4}{3}},\\
			&\int_{\mathbb{R}}(u^S+u^R-u_m)^2(u_{\xi}^R)^2(u_{\xi\xi}^S)^2d\xi\leq C(u_m^2+\delta_{R}^2)\int_{\mathbb{R}}(u_{\xi}^S)^2(u_{\xi}^R)^2d\xi\leq Cu_m^2\delta_R^{\frac{4}{7}}(1+t)^{-\frac{6}{5}},\\
			&\int_{\mathbb{R}}(u^R_{\xi\xi\xi\xi})^2d\xi\leq C \|u_{\xi\xi\xi \xi}^{R}\|_{L^\infty(\mathbb{R})}^{\frac{2}{3}} \|u_{\xi \xi\xi\xi}^{R}\|_{L^\infty(\mathbb{R})}^{\frac{1}{3}} (1+t)^{-1} \leq Cu_m\delta_R^{\frac{1}{6}} (1+t)^{-\frac{4}{3}},\\
			&\int_{\mathbb{R}}(u^S-u_m)^2(u^S+2u^R-u_m)^2(u_{\xi\xi\xi}^R)^2d\xi\leq Cu_m^2\int_{\mathbb{R}}(u_{\xi\xi\xi}^R)^2d\xi\leq Cu_m^2\delta_R^{\frac{2}{3}}(1+t)^{-\frac{4}{3}},\\
			&\int_{\mathbb{R}}(u^R-u_m)^2(2u^S+u^R-u_m)^2(u_{\xi\xi\xi}^S)^2d\xi\leq Cu_m^2\int_{\mathbb{R}}(u^R-u_m)^2(u_{\xi}^S)^2d\xi \leq Cu_m^2\delta_R^{\frac{4}{7}}(1+t)^{-\frac{4}{3}}.
		\end{align*}
		Hence, we have 
		\begin{align*}
			\int_{\mathbb{R}}(F_{\xi\xi})^2d\xi\leq Cu_m\delta_R^{\frac{1}{6}}(1+t)^{-\frac{6}{5}}.
		\end{align*}
		This completes the proof of Lemma \ref{lemf}.
	\end{proof}
	Finally, it remains to estimate $\mathbf{Z}(t).$ 
	\begin{lemma}\label{lem7}
		$\mathbf{Z}(t)$ is defined in Lemma $(\ref{lem2})$, it holds that for $\forall t\in \left[0,T\right]$
		\begin{equation}
			\begin{aligned}
				\left|\int_{0}^t\mathbf{Z}(s)ds\right|\leq&\frac{112\sqrt{\tau}}{3}u_m^2\int_{\mathbb{R}}w\left(\phi_{0}^2+\tau r_0^2\right)d\xi+\frac{224\sqrt{\tau}}{3}u_m^2\int_{\mathbb{R}}w\eta(U^{-\mathbf{X}}|\tilde{U})d\xi.\\
				&+\frac{1}{1800}\int_{0}^t\int_{\mathbb{R}} \left(\phi_{\xi}^{-\mathbf{X}}\right)^2wd\xi ds+\frac{2u_m^3}{15}\int_{0}^t\int_{\mathbb{R}}\left(\phi^{-\mathbf{X}}\right)^2u_{\xi}^Sd\xi ds\\
				&+C\tau u_m^4\int_{0}^t\int_{\mathbb{R}}(r^{-\mathbf{X}})^2wd\xi ds+Cu_m^2\| \phi^{-\mathbf{X}} \|_{L^2(\mathbb{R})}^{\frac{2}{3}}\delta_{R}^{\frac{8}{33}}.
			\end{aligned}
		\end{equation}
	\end{lemma}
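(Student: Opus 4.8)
The plan is to split $\mathbf{Z}(t)=\mathbf{Z}_1(t)+\mathbf{Z}_2(t)+\mathbf{Z}_3(t)$ into the three pieces of its definition (recall the formula in Lemma \ref{lem2.5}), namely $\mathbf{Z}_1:=-\int_{\mathbb{R}}F\phi^{-\mathbf{X}}w\,d\xi$, $\mathbf{Z}_2:=\tau\int_{\mathbb{R}}r^{-\mathbf{X}}F w'u_\xi^S\,d\xi$ and $\mathbf{Z}_3:=\tau\frac{d}{dt}\int_{\mathbb{R}}r^{-\mathbf{X}}\phi^{-\mathbf{X}}w'u_\xi^S\,d\xi$, and to estimate $\int_0^t\mathbf{Z}_j$ one at a time, using throughout the pointwise bounds \eqref{WFP}, the bound $|u_\xi^S|\le 28u_m^3$ valid under \eqref{tau}, the consequence $|w'u_\xi^S|\le 70u_m^4\le\frac{112}{3}u_m^2 w$, the relations $\delta_S\sim u_m$ and $\delta_R=\delta_S^2$, and Lemma \ref{lemf}. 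The term $\mathbf{Z}_3$ is a perfect time derivative, so I would simply write $\int_0^t\mathbf{Z}_3\,ds=\tau\int_{\mathbb{R}}r^{-\mathbf{X}}(t)\phi^{-\mathbf{X}}(t)w'u_\xi^S\,d\xi-\tau\int_{\mathbb{R}}r_0\phi_0 w'u_\xi^S\,d\xi$, the lower endpoint using $\mathbf{X}(0)=0$; splitting $\tau|r\phi|=\sqrt{\tau}|\phi|\cdot(\sqrt{\tau}|r|)$ with Young's inequality and $|w'u_\xi^S|\le\frac{112}{3}u_m^2 w$ bounds the time-$t$ term by a multiple of $\sqrt{\tau}u_m^2\int_{\mathbb{R}}w\,\eta(U^{-\mathbf{X}}|\tilde{U})\,d\xi$ — precisely the functional on the left of Lemma \ref{lem2}, with a small prefactor — and the $t=0$ term by a multiple of $\sqrt{\tau}u_m^2\int_{\mathbb{R}}w(\phi_0^2+\tau r_0^2)\,d\xi$. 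This produces the first two terms of the asserted bound.

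For $\mathbf{Z}_1$ the difficulty is that $\|F(s)\|_{L^2}\lesssim(1+s)^{-2/3}$ (from the proof of Lemma \ref{lemf}) does not make the product $\|F(s)\|_{L^2}\|\phi^{-\mathbf{X}}(s)\|_{L^2}$ integrable in $s$, so a plain Cauchy--Schwarz/time-integration is not enough. Instead I would pass to $\|\phi^{-\mathbf{X}}\|_{L^\infty}$ exactly as in the proof of Lemma \ref{J}: estimate $|\mathbf{Z}_1|\le\|F\|_{L^1}\|\phi^{-\mathbf{X}}w\|_{L^\infty}\le Cu_m\|F\|_{L^1}\|\phi^{-\mathbf{X}}\sqrt w\|_{L^\infty}$, bound $\|\phi^{-\mathbf{X}}\sqrt w\|_{L^\infty}^2\le 2\|\phi^{-\mathbf{X}}\sqrt w\|_{L^2}\|(\phi^{-\mathbf{X}}\sqrt w)_\xi\|_{L^2}$ with $\|(\phi^{-\mathbf{X}}\sqrt w)_\xi\|_{L^2}\le\|\phi_\xi^{-\mathbf{X}}\sqrt w\|_{L^2}+\frac{\sqrt{30}}{6}\|\phi^{-\mathbf{X}}u_\xi^S\|_{L^2}$, and apply Young's inequality in the form $ab\le\varepsilon a^4+C(\varepsilon)b^{4/3}$ to absorb $\frac{1}{1800}\int_{\mathbb{R}}(\phi_\xi^{-\mathbf{X}})^2w\,d\xi$ and, through $\|\phi^{-\mathbf{X}}u_\xi^S\|_{L^2}^2\le 28u_m^3\int_{\mathbb{R}}(\phi^{-\mathbf{X}})^2 u_\xi^S\,d\xi$, the term $\frac{2u_m^3}{15}\int_{\mathbb{R}}(\phi^{-\mathbf{X}})^2 u_\xi^S\,d\xi$. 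What is left is $Cu_m^2\big(\sup_{[0,t]}\|\phi^{-\mathbf{X}}\|_{L^2}\big)^{2/3}\int_0^t\|F(s)\|_{L^1}^{4/3}\,ds$, so everything reduces to showing $\int_0^t\|F(s)\|_{L^1}^{4/3}\,ds\le C\delta_R^{8/33}$.

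The bound on $\int_0^t\|F\|_{L^1}^{4/3}\,ds$ is the heart of the matter. Since $F$ is a finite sum of quadratic wave-interaction products — bounded factors times $(u^R-u_m)u_\xi^S$ or $(u^S-u_m)u_\xi^R$ — together with $u^R_{\xi\xi}$, I would split $\mathbb{R}$ into $(-\infty,0)$, the fan strip $[0,(f'(u_+)-\sigma)(1+t)]$ (inserting $u^r$ there), and its right complement, and apply Lemma \ref{WIE}. Choosing $\varepsilon=\frac{1}{11}$ in that lemma makes each resulting time-exponent strictly less than $-1$, so each $t$-integral of the $\frac43$-power converges to a constant uniform in $\delta_R$; at the same time the slowest-vanishing $\delta_R$-power among the interaction contributions equals $\delta_R^{8\varepsilon/3}=\delta_R^{8/33}$, while the $\|u^R_{\xi\xi}\|_{L^1}\le C\min\{\delta_R,(1+t)^{-1}\}$ piece contributes only $O(\delta_R^{1/3})$. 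For $\mathbf{Z}_2$ I would use $\big|\frac{w'u_\xi^S}{w}\big|\le\frac{112}{3}u_m^2$ and Cauchy--Schwarz to get $|\mathbf{Z}_2|\le C\tau u_m^3\|r^{-\mathbf{X}}\sqrt w\|_{L^2}\|F\|_{L^2}$, then an unbalanced Young's inequality giving $C\tau u_m^4\int_{\mathbb{R}}(r^{-\mathbf{X}})^2 w\,d\xi+C\tau u_m^2\|F\|_{L^2}^2$; integrating in $t$ and using $\int_0^t\|F\|_{L^2}^2\,ds\le Cu_m\delta_R^{1/6}$ from Lemma \ref{lemf}, the residue $C\tau u_m^3\delta_R^{1/6}$ is absorbed into $\delta_R^{8/33}$ (as $\tau\le C$ and $\frac23>\frac{8}{33}$). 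Adding the three estimates gives the lemma. The main obstacle is thus the $F$-analysis for $\mathbf{Z}_1$: $F$ decays only polynomially and too slowly to be handled directly, forcing one to trade a borderline factor against the time-integrable dissipation $\int_0^t\int_{\mathbb{R}}(\phi_\xi^{-\mathbf{X}})^2 w$ and then to rely on the sharp wave-interaction estimates of Lemma \ref{WIE}, with the exponent $8/33$ being exactly what the optimal choice $\varepsilon=1/11$ delivers.
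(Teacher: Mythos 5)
Your proof follows the paper's almost step for step: the same three-way split of $\mathbf{Z}$, the same integration of the exact $t$-derivative $\mathbf{Z}_3$ using $|w'u_\xi^S|\le\frac{112}{3}u_m^2 w$ to produce the two $\sqrt{\tau}u_m^2$-terms, the same Sobolev--Minkowski--Young chain on $\mathbf{Z}_1$ absorbing into $\frac{1}{1800}\int(\phi_\xi^{-\mathbf{X}})^2 w$ and $\frac{2u_m^3}{15}\int(\phi^{-\mathbf{X}})^2 u_\xi^S$, and the same Young/Lemma~\ref{lemf} treatment of $\mathbf{Z}_2$. The only place you go beyond the paper is in actually deriving $\int_0^t\big(\int_{\mathbb{R}}|F|\,d\xi\big)^{4/3}ds\le C\delta_R^{8/33}$ from Lemma~\ref{WIE} with $\varepsilon=\frac{1}{11}$ (the paper states this as \eqref{F} and cites \cite{ref16}); your reconstruction, including the identification of the borderline term $\int_0^{+\infty}|u^S-u_m|u_\xi^R\,d\xi$ as giving $\delta_R^{8\varepsilon/3}$ and the time exponent $-\frac{40}{33}<-1$, is correct.
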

	\begin{proof}
		\begin{equation}
			\begin{aligned}
				\left|\int_{0}^t\mathbf{Z}(s)ds\right|
				\leq&\left| \int_{0}^t\int_{\mathbb{R}} F \phi^{-\mathbf{X}} w d\xi ds\right|+\tau\left|\int_{0}^t\int_{\mathbb{R}}r^{-\mathbf{X}}Fw'u_{\xi}^{S}d\xi ds\right|\\
				&+\tau\left|\int_{\mathbb{R}}r^{-\mathbf{X}}\phi^{-\mathbf{X}}w'u_{\xi}^{S}d\xi\right|+\tau\left|\int_{\mathbb{R}}r_0\phi_0w'u_{\xi}^{S}d\xi\right|.
			\end{aligned}	
		\end{equation}
		By Sobolev inequality, combined with Cauchy inequality and Minkowski inequality, we have
		\begin{equation}
			\begin{aligned}
				&\left| \int_{\mathbb{R}} F \phi^{-\mathbf{X}} w d\xi \right| \leq \| \phi^{-\mathbf{X}}w \|_{L^{\infty}(\mathbb{R})} \int_{\mathbb{R}} |F| d\xi \\
				\leq &C\| \phi^{-\mathbf{X}}w \|_{L^2(\mathbb{R})}^{\frac{1}{2}} \| \left(\phi^{-\mathbf{X}}w\right)_{\xi} \|_{L^2(\mathbb{R})}^{\frac{1}{2}} \int_{\mathbb{R}} |F| d\xi\\
				\leq&C\| \phi^{-\mathbf{X}}w \|_{L^2(\mathbb{R})}^{\frac{1}{2}}\left(\|\phi_{\xi}^{-\mathbf{X}}w\|_{L^2(\mathbb{R})}^{\frac{1}{2}}+\|\phi^{-\mathbf{X}}w_{\xi}\|_{L^2(\mathbb{R})}^{\frac{1}{2}}\right)\int_{\mathbb{R}} |F| d\xi\\
				\leq&C\| \phi^{-\mathbf{X}}w \|_{L^2(\mathbb{R})}^{\frac{1}{2}}\|\phi_{\xi}^{-\mathbf{X}}w\|_{L^2(\mathbb{R})}^{\frac{1}{2}}\int_{\mathbb{R}} |F| d\xi+C\| \phi^{-\mathbf{X}}w \|_{L^2(\mathbb{R})}^{\frac{1}{2}}\|\phi^{-\mathbf{X}}w_{\xi}\|_{L^2(\mathbb{R})}^{\frac{1}{2}}\int_{\mathbb{R}} |F| d\xi\\
				\leq& \frac{1}{13500u_m^2}\| \phi_{\xi}^{-\mathbf{X}}w \|_{L^2(\mathbb{R})}^2+\frac{2}{2625u_m^2}\|\phi^{-\mathbf{X}}w'u_{\xi}^S\|_{L^2(\mathbb{R})}^2 + C u_m^2 \| \phi^{-\mathbf{X}} \|_{L^2(\mathbb{R})}^{\frac{2}{3}} \left( \int_{\mathbb{R}} |F| d\xi \right)^{\frac{4}{3}}\\
				\leq& \frac{1}{1800}\int_{\mathbb{R}} \left(\phi_{\xi}^{-\mathbf{X}}\right)^2wd\xi+\frac{2u_m^3}{15}\int_{\mathbb{R}}\left(\phi^{-\mathbf{X}}\right)^2u_{\xi}^Sd\xi+Cu_m^2 \| \phi^{-\mathbf{X}} \|_{L^2(\mathbb{R})}^{\frac{2}{3}} \left( \int_{\mathbb{R}} |F| d\xi \right)^{\frac{4}{3}},
			\end{aligned}
		\end{equation}
		where
		\begin{equation} \label{F}
			\int_{0}^t\left( \int_{\mathbb{R}} |F| d\xi \right)^{\frac{4}{3}}ds \leq C\delta_R^{\frac{8}{33}},
		\end{equation}
		see \cite{ref16}. Thus, we obtain
		\begin{equation}\label{Z1}
			\begin{aligned}
				\left| \int_{0}^t\int_{\mathbb{R}} F \phi^{-\mathbf{X}} w d\xi ds\right| \leq&\frac{1}{1800}\int_{0}^t\int_{\mathbb{R}} \left(\phi_{\xi}^{-\mathbf{X}}\right)^2wd\xi ds+\frac{2u_m^3}{15}\int_{0}^t\int_{\mathbb{R}}\left(\phi^{-\mathbf{X}}\right)^2u_{\xi}^Sd\xi ds\\
				&+Cu_m^2\| \phi^{-\mathbf{X}} \|_{L^2(\mathbb{R})}^{\frac{2}{3}}\delta_{R}^{\frac{8}{33}}.
			\end{aligned}
		\end{equation}
		Similarly,
		\begin{equation}\label{Z2}
			\begin{aligned}
				&\tau\left|\int_{0}^t\int_{\mathbb{R}}r^{-\mathbf{X}}Fw'u_{\xi}^{S}d\xi ds\right|\\
				\leq&\tau\left\lVert\frac{w'}{w}\right\rVert_{L^{\infty}(\mathbb{R})}\lVert u_{\xi}^S\rVert_{L^{\infty}(\mathbb{R})}\left|\int_{0}^t\int_{\mathbb{R}}r^{-\mathbf{X}}Fwd\xi ds\right|\\
				\leq&\frac{112\tau}{3}u_m^2\left(\int_{0}^t\int_{\mathbb{R}}(r^{-\mathbf{X}})^2w^2d\xi ds+\int_{0}^t\int_{\mathbb{R}}|F|^2d\xi ds\right)\\
				\leq&280\tau u_m^4\int_{0}^t\int_{\mathbb{R}}(r^{-\mathbf{X}})^2wd\xi ds+\frac{112\tau}{3}u_m^2\int_{0}^t\int_{\mathbb{R}}|F|^2d\xi ds,
			\end{aligned}
		\end{equation}
		where $\int_{0}^t\int_{\mathbb{R}}|F|^2d\xi ds\leq C\delta_{R}^{\frac{4}{7}}$, by Lemma \ref{lemf}. Finally, 
		\begin{equation}\label{Z3}
			\begin{aligned}
				\tau\left|\int_{\mathbb{R}}r^{-\mathbf{X}}\phi^{-\mathbf{X}}w'u_{\xi}^{S}d\xi\right|\leq&\sqrt{\tau}\left\lVert\frac{w'}{w}\right\rVert_{L^{\infty}(\mathbb{R})}\|u_{\xi}^{S}\|_{L^{\infty}(\mathbb{R})}\left|\int_{\mathbb{R}}\sqrt{\tau}r^{-\mathbf{X}}\phi^{-\mathbf{X}}wd\xi\right|\\
				\leq&\frac{112\sqrt{\tau}}{3}u_m^2\left(\int_{\mathbb{R}}\tau(r^{-\mathbf{X}})^2wd\xi+\int_{\mathbb{R}}(\phi^{-\mathbf{X}})^2wd\xi\right)\\
				=&\frac{224\sqrt{\tau}}{3}u_m^2\int_{\mathbb{R}}w\eta(U^{-\mathbf{X}}|\tilde{U})d\xi,
			\end{aligned}
		\end{equation}
		and
		\begin{equation}\label{Z4}
			\begin{aligned}
				\tau\left|\int_{\mathbb{R}}r_0\phi_0w'u_{\xi}^{S}d\xi\right|\leq\frac{112\sqrt{\tau}}{3}u_m^2\int_{\mathbb{R}}w\left(\phi_{0}^2+\tau r_0^2\right)d\xi.
			\end{aligned}
		\end{equation}
		By combining results $(\ref{Z1})$ through $(\ref{Z4})$, we prove Lemma $\ref{lem7}$.
	\end{proof}
	Thus, using Lemma $\ref{lem2.5},\ref{lem3}$ and integrating $(\ref{yds})$ with respect to $t$, we can obtain $L^2$ estimates as following:
	\begin{equation}\label{zero}
		\begin{aligned}
			&\int_{\mathbb{R}}w\eta(U^{-\mathbf{X}}|\tilde{U})d\xi+ \left(2-2k_{1}\right)\int_0^t\int_{\mathbb{R}}(r^{-\mathbf{X}})^2wd\xi ds + \frac{25u_m^2}{64}\int_0^t |\dot{\mathbf{X}}(s)|^2 ds \\
			&+ \frac{4u_m^3}{5}\int_0^t \int_{\mathbb{R}} (\phi^{-\mathbf{X}})^2u^S_{\xi}d\xi ds+3u_m\int_0^t \int_{\mathbb{R}} (\phi^{-\mathbf{X}})^2wu^R_\xi d\xi ds+2\int_0^t\mathbf{G}^{SR}(s)ds \\
			\leq& Cu_m^2 (\|\phi_0\|_{L^2}^2+\tau\|r_0\|_{L^2}^2 )+2k_2\int_{0}^t\int_{\mathbb{R}}\left(\phi_{\xi}^{-\mathbf{X}}\right)^2wd\xi ds+Cu_m^2\delta_R^{\frac{8}{33}}.
		\end{aligned}
	\end{equation} 
	Here $\tau,u_{m},k_1,k_2$ satisfy the following inequality:
	\begin{equation}\label{k1k2}
		\left\{\begin{array}{l}
			\frac{224\sqrt{\tau}}{3}u_m^2\leq \frac{1}{2} ,
			\\C\delta _R+\|\phi^{-\mathbf{X}}\|_{L^{\infty }(\mathbb{R})}\leq \frac{3u_m}{2}, 
			\\\frac{u_m^2}{64}+\frac{5\tau u_m^2}{64}\leq \frac{25u_m^2}{128},  
			\\\tau\left(M_1+C\|\phi^{-\mathbf{X}}\|_{L^{\infty }(\mathbb{R})} \right)+\frac{4u_m^3}{15}\leq \frac{2u_m^3}{5} ,
			\\\tau\left(M_2+C\delta _R^{\frac{1}{3}}+C\|\phi^{-\mathbf{X}}\|_{L^{\infty }(\mathbb{R})}\right)+C\tau u_m^4\leq k_1\leq1,
			\\\tau\left(C u_m^4+C\delta _R+C\|\phi^{-\mathbf{X}}\|_{L^{\infty }(\mathbb{R})}\right)+\frac{751}{900}\leq k_2,  
		\end{array}\right.
	\end{equation}
	where $M_{1},M_{2}$ are given in Lemma $\ref{lem6}$. For a given constant state $u_{-}<0$, we can choose $$\delta_{0}=4u_m=-2u_{-}>0,\ \varepsilon_{2}=\delta_{0}^2.$$ By the smallness of $\delta_{0}$ and $\delta_{S}=3u_m<\delta_{0}$ together with \eqref{tau}, we get that \eqref{k1k2}$_1$ and \eqref{k1k2}$_3$ are satisfied. Using the smallness of $\varepsilon_{2},\delta_{R}=\delta_{S}^2$ and $\|\phi^{-\mathbf{X}}\|_{L^{\infty }(\mathbb{R})}\leq \varepsilon_{2}$, we get that \eqref{k1k2}$_2$ also holds.  These two inequalities \eqref{k1k2}$_4$ and \eqref{k1k2}$_5$ will be used to determine suitable values of $k_1,k_2$. 
	
	Changing of variable $\xi\to\xi+\mathbf{X}(t),$ we have obtained the proof of Lemma $\ref{lem2}.$
	
	\subsection{High-order energy estimates}
	\begin{lemma}\label{lem8}
		 There exist constants $\delta_{0},\varepsilon_{3}>0$, if the rarefaction wave and shock wave strength $\delta_{R}=\delta_{S}^2,\delta_{S}\leq \delta_{0}$ and $$\sup_{0\leq t\leq T} \|\phi^{-\mathbf{X}}(t,\cdot), \sqrt{\tau}r^{-\mathbf{X}}(t,\cdot)\|_{H^2(\mathbb{R})} \leq \varepsilon_{3},$$ then there exists a constant $C>0$ such that for $0\leq t\leq T$, we have 
		\begin{equation}\label{HEE}
			\begin{aligned}
				&\|\partial_{\xi}\phi^{-\mathbf{X}}\sqrt{w}\|_{H^1(\mathbb{R})}^2+\tau\|\partial_{\xi}r^{-\mathbf{X}}\sqrt{w}\|_{H^1(\mathbb{R})}^2+\int_{0}^t\|\partial_{\xi}r^{-\mathbf{X}}\sqrt{w}\|_{H^1(\mathbb{R})}^2 ds\\
				\leq&Cu_m^2\left(\|\partial_{\xi}^k \phi_{0}\|_{L^{2}(\mathbb{R})}^2+\|\partial_{\xi}^kr_{0}\|_{L^{2}(\mathbb{R})}^2\right)+\frac{u_m^2}{27}\int_{0}^t\left|\dot{\mathbf{X}}(s)\right|^2ds\\
				&+\left(Cu_m^3+C\delta_{R}\right)\int_{0}^t\int_{\mathbb{R}}(\phi^{-\mathbf{X}})^2wu_{\xi}^Rd\xi ds+(Cu_m^3+A_1)u_m^2\int_{0}^t\int_{\mathbb{R}}(\phi^{-\mathbf{X}})^2u_{\xi}^Sd\xi ds\\
				&+\left(Cu_m^2+A_1+C\|\phi_{\xi}^{-\mathbf{X}}\|_{L^{\infty}(\mathbb{R})}\right)\int_{0}^t\|\partial_{\xi}\phi^{-\mathbf{X}}\sqrt{w}\|_{H^1(\mathbb{R})}^2 ds+Cu_m^2\delta_{R}^{\frac{1}{6}},
			\end{aligned}
		\end{equation}
		where  $A_1:=C\delta_R+C\|\phi^{-\mathbf{X}}\|_{L^{\infty}(\mathbb{R})}$.
	\end{lemma}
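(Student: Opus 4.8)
The plan is to derive \eqref{HEE} by weighted $L^{2}$ energy estimates applied to the once‑ and twice‑$\xi$‑differentiated perturbation system, mirroring the relative‑entropy identity of Lemma~\ref{lem2.5} at one and two orders higher. Working in the $(-\mathbf{X})$‑shifted unknowns $\phi^{-\mathbf{X}},r^{-\mathbf{X}}$, I first differentiate \eqref{PFC} in $\xi$ once and twice. Since $f(u)=u^{3}$, the nonlinearity $\bigl(f(\phi+\tilde u^{\mathbf{X}})-f(\tilde u^{\mathbf{X}})\bigr)_{\xi}$ is an explicit polynomial in $\phi,\tilde u^{\mathbf{X}}$ and their derivatives, so $\partial_{\xi}$ and $\partial_{\xi}^{2}$ of the system yield transport‑type equations for $(\phi_{\xi}^{-\mathbf{X}},r_{\xi}^{-\mathbf{X}})$ and $(\phi_{\xi\xi}^{-\mathbf{X}},r_{\xi\xi}^{-\mathbf{X}})$ whose sources are products of the background profiles $u^{S},u^{R}$ and their derivatives up to fourth order, of $\phi^{-\mathbf{X}},r^{-\mathbf{X}}$ and their derivatives, of $\dot{\mathbf{X}}(t)$, and of $\partial_{\xi}^{k}F$ for $k=1,2$; as in Lemma~\ref{lem2.5} the change of variable $\xi\mapsto\xi-\mathbf{X}(t)$ generates the familiar $\dot{\mathbf{X}}(t)\int w_{\xi}(\cdots)\,d\xi$ contributions.

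Next I test the $\partial_{\xi}$‑equations for $\phi$ and $r$ against $w\,\phi_{\xi}^{-\mathbf{X}}$ and $w\,r_{\xi}^{-\mathbf{X}}$, and the $\partial_{\xi}^{2}$‑equations against $w\,\phi_{\xi\xi}^{-\mathbf{X}}$ and $w\,r_{\xi\xi}^{-\mathbf{X}}$; adding and integrating over $\mathbb{R}\times[0,t]$ produces the energy $\frac{d}{dt}\int_{\mathbb{R}}w\,\frac{1}{2}\bigl[(\phi_{\xi}^{-\mathbf{X}})^{2}+\tau(r_{\xi}^{-\mathbf{X}})^{2}\bigr]\,d\xi$ and its second‑order analogue, while the relaxation term in Cattaneo's law furnishes the dissipation $\int_{\mathbb{R}}w\bigl[(r_{\xi}^{-\mathbf{X}})^{2}+(r_{\xi\xi}^{-\mathbf{X}})^{2}\bigr]\,d\xi$ and the convection term $-\sigma\phi_{\xi\xi}^{-\mathbf{X}}$ contributes $\frac{\sigma}{2}\int w'u_{\xi}^{S}(\phi_{\xi}^{-\mathbf{X}})^{2}\,d\xi\le0$ (and likewise one order up), a favourable sign since $w'\le0$, $u_{\xi}^{S}>0$ by \eqref{WFP} and Lemma~\ref{lem0}. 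The $\phi$–$r$ coupling terms assemble into $\int_{\mathbb{R}}(r_{\xi}^{-\mathbf{X}}\phi_{\xi}^{-\mathbf{X}})_{\xi}\,w\,d\xi=-\int_{\mathbb{R}}r_{\xi}^{-\mathbf{X}}\phi_{\xi}^{-\mathbf{X}}w'u_{\xi}^{S}\,d\xi$ and its second‑order counterpart, which I treat exactly as the term $\int(r^{-\mathbf{X}}\phi^{-\mathbf{X}})_{\xi}w\,d\xi$ in \eqref{11}: substitute the $r$‑equation to trade it for $\tau\frac{d}{dt}\int r_{\xi}^{-\mathbf{X}}\phi_{\xi}^{-\mathbf{X}}w'u_{\xi}^{S}\,d\xi$ plus lower‑order pieces. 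Every remaining term — products against $u_{\xi}^{S},u_{\xi\xi}^{S},u_{\xi\xi\xi}^{S}$, against $(u_{\xi}^{R})^{k},u_{\xi\xi}^{R},\dots$, against $w',w''$, the $\dot{\mathbf{X}}(t)$‑terms, and the $\partial_{\xi}^{k}F$‑terms — is then bounded by Cauchy--Schwarz and the Sobolev inequality $\|g\|_{L^{\infty}}\le C\|g\|_{L^{2}}^{1/2}\|g_{\xi}\|_{L^{2}}^{1/2}$, using \eqref{WFP}, the a~priori smallness $\|(\phi^{-\mathbf{X}},\sqrt{\tau}r^{-\mathbf{X}})\|_{H^{2}}\le\varepsilon_{3}$, the smallness of $u_{m}$ (hence of $\delta_{S}=3u_{m}$ and $\delta_{R}=\delta_{S}^{2}$), Lemmas~\ref{lem0} and \ref{lem1}, the wave‑interaction bounds of Lemma~\ref{WIE}, and Lemma~\ref{lemf} for $\int_{0}^{t}\int_{\mathbb{R}}|\partial_{\xi}^{k}F|^{2}\le C\delta_{R}^{2/3}$; the interaction and error terms carry integrable time factors $(1+t)^{-4/3}$ whose time‑integrals collapse into the $Cu_{m}^{2}\delta_{R}^{1/6}$ on the right of \eqref{HEE}.

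Finally I add the first‑ and second‑order identities, move the accumulated $\tau\frac{d}{dt}\int r_{\xi}^{-\mathbf{X}}\phi_{\xi}^{-\mathbf{X}}w'u_{\xi}^{S}\,d\xi$ (and its second‑order analogue) to the left, absorbing them into $\|\partial_{\xi}\phi^{-\mathbf{X}}\sqrt{w}\|_{H^{1}}^{2}+\tau\|\partial_{\xi}r^{-\mathbf{X}}\sqrt{w}\|_{H^{1}}^{2}$ via $\tau\bigl|\int r_{\xi}^{-\mathbf{X}}\phi_{\xi}^{-\mathbf{X}}w'u_{\xi}^{S}\bigr|\le C\sqrt{\tau}u_{m}^{2}\bigl(\|\phi_{\xi}^{-\mathbf{X}}\sqrt{w}\|_{L^{2}}^{2}+\tau\|r_{\xi}^{-\mathbf{X}}\sqrt{w}\|_{L^{2}}^{2}\bigr)$ together with \eqref{tau}, exactly as in Lemma~\ref{lem7}, and then integrate in $t$. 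What survives on the left is precisely the quantity in \eqref{HEE}; the leftover $\int_{0}^{t}\int w\phi^{2}u_{\xi}^{S}$, $\int_{0}^{t}\int w\phi^{2}u_{\xi}^{R}$ and $\int_{0}^{t}|\dot{\mathbf{X}}|^{2}$ contributions stay on the right with the displayed coefficients, the constant $u_{m}^{2}/27$ in front of $\int_{0}^{t}|\dot{\mathbf{X}}|^{2}$ being obtained by choosing the Young‑inequality parameter small wherever $\dot{\mathbf{X}}$ appears, and all three are ultimately disposed of by the $L^{2}$ bound of Lemma~\ref{lem2}.

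The main obstacle is the \emph{absence of parabolic smoothing for $\phi$}: unlike the classical viscous law \eqref{tuihua}, where testing the differentiated equation against $\phi_{\xi\xi}$ produces a clean $\mu\|\phi_{\xi\xi}\|^{2}$ dissipation that closes the high‑order estimate outright, Cattaneo's law dissipates only $r_{\xi},r_{\xi\xi}$. Hence every source term carrying $\phi_{\xi}^{-\mathbf{X}}$ or $\phi_{\xi\xi}^{-\mathbf{X}}$ — principally those arising from $\partial_{\xi}^{2}\bigl(f(\phi+\tilde u^{\mathbf{X}})-f(\tilde u^{\mathbf{X}})\bigr)$ after integration by parts (e.g. $\int 3(\phi+\tilde u^{\mathbf{X}})^{2}\phi_{\xi\xi}^{-\mathbf{X}}w\phi_{\xi}^{-\mathbf{X}}\,d\xi$‑type terms) and those created while handling the $\phi$–$r$ coupling à la \eqref{11} — cannot be absorbed at this stage and must be thrown onto the right as $\bigl(Cu_{m}^{2}+A_{1}+C\|\phi_{\xi}^{-\mathbf{X}}\|_{L^{\infty}}\bigr)\int_{0}^{t}\|\partial_{\xi}\phi^{-\mathbf{X}}\sqrt{w}\|_{H^{1}}^{2}\,ds$, to be recovered only later from the dissipative estimate. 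Making that coefficient genuinely small is exactly what forces the shock strength $\delta_{S}=3u_{m}$ to be taken small — a restriction not needed in \cite{ref16} — since $Cu_{m}^{2}$ must dominate absolute constants inherited from $\sigma=3u_{m}^{2}$, $\|w\|_{L^{\infty}}\sim u_{m}^{2}$, $\|u_{\xi}^{S}\|_{L^{\infty}}\le28u_{m}^{3}$ and the like; the bookkeeping that verifies these coefficients, together with those in \eqref{k1k2}, are simultaneously compatible under \eqref{tau} and $\delta_{S}\le\delta_{0}$ is the technical heart of the argument.
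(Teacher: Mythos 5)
Your overall strategy matches the paper's: differentiate \eqref{PFC} once and twice, test against $w\,\partial_\xi^k\phi^{-\mathbf{X}}$ and $w\,\partial_\xi^k r^{-\mathbf{X}}$, and estimate the resulting source terms using \eqref{WFP}, Lemmas~\ref{lem0}, \ref{lem1}, \ref{WIE}, \ref{lemf}, the a~priori smallness, and the smallness of $u_m$; the $\dot{\mathbf{X}}$-terms go by Young and the error terms integrate in time to $Cu_m^2\delta_R^{1/6}$. Your diagnosis of why the coefficient on $\int_0^t\|\partial_\xi\phi^{-\mathbf{X}}\sqrt{w}\|_{H^1}^2\,ds$ cannot be made $<1$ at this stage, and is only closed later via Lemma~\ref{lem9}, is also exactly the point.

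However, two of your intermediate claims diverge from the paper's proof. First, for the $\phi$--$r$ coupling you propose to substitute the $r$-equation to trade $\int r_\xi^{-\mathbf{X}}\phi_\xi^{-\mathbf{X}}w'u_\xi^S\,d\xi$ for a total time derivative, mirroring \eqref{11}. The paper does not do this in Lemma~\ref{lem8}: the coupling terms assemble into $R_4^k=-\int\partial_\xi^k r^{-\mathbf{X}}\partial_\xi^k\phi^{-\mathbf{X}}w'u_\xi^S\,d\xi$, which is simply bounded by Young's inequality as $Cu_m^3\int(\partial_\xi^k r^{-\mathbf{X}})^2w\,d\xi+Cu_m^3\int(\partial_\xi^k\phi^{-\mathbf{X}})^2w\,d\xi$, exploiting $|w'u_\xi^S|\lesssim u_m^2 w$. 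The elaborate \eqref{11}-style substitution is needed at the $L^2$ level to preserve the exact $a$-contraction structure of $\mathbf{G}^S$, but at higher order a crude bound suffices, so your route works but is needlessly heavy. Second, you assert that the convection contribution $\frac{\sigma}{2}\int w'u_\xi^S(\partial_\xi^k\phi^{-\mathbf{X}})^2\,d\xi\le0$ has ``a favourable sign.'' It does not: once moved to the right-hand side of the energy identity it becomes $R_1^k=-\frac{\sigma}{2}\int(\partial_\xi^k\phi^{-\mathbf{X}})^2w'u_\xi^S\,d\xi\ge0$, a non-negative \emph{bad} term which the paper explicitly bounds by $Cu_m^4\int(\partial_\xi^k\phi^{-\mathbf{X}})^2w\,d\xi$. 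The term is $O(u_m^4)$ and harmless, but it cannot be dropped by sign; you must bound it and fold it into the $\int_0^t\|\partial_\xi\phi^{-\mathbf{X}}\sqrt{w}\|_{H^1}^2\,ds$ budget like the rest.
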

	\begin{proof}
		Applying $\partial_{\xi}^k(k=1,2)$ to the system $(\ref{PFC})$ and changing $\xi\to\xi-\mathbf{X}(t)$, we get that
		\begin{equation}\label{HPFC}
			\left\{\begin{array}{l}\partial_{\xi}^k\phi_{t}^{-\mathbf{X}}-\sigma \partial_{\xi}^{k+1}\phi^{-\mathbf{X}}+\partial_{\xi}^{k+1}\left(f\left(\phi^{-\mathbf{X}}+\tilde{u}\right)-f\left(\tilde{u}\right)\right)\\
				\ \ \ \ \ \ \ \ \ \ +\dot{\mathbf{X}}(t)\left(\partial_{\xi}^{k+1}u^{S}+\partial_{\xi}^{k+1}u^{R}\right)-\partial_{\xi}^{k+1}r^{-\mathbf{X}}=-\partial_{\xi}^kF, \\ 
				\tau \partial_{\xi}^kr_{t}^{-\mathbf{X}}-\tau\sigma\partial_{\xi}^{k+1} r^{-\mathbf{X}}+\partial_{\xi}^kr^{-\mathbf{X}}-\partial_{\xi}^{k+1}\phi^{-\mathbf{X}}+\tau \dot{\mathbf{X}}(t) \partial_{\xi}^{k+1}\tilde{q}+\tau \partial_{\xi}^{k+1}\left(u^{R}\right)_{t}=0.\end{array}\right. 
		\end{equation}
		Multiplying the above equations by $w\partial_{\xi}^k\phi^{-\mathbf{X}},\ w\partial_{\xi}^kr^{-\mathbf{X}}$, and integrating over $\mathbb{R}$, we have
		\begin{equation}\label{gjfc}
			\begin{aligned}
				\frac{1}{2}\frac{d}{dt}\int_{\mathbb{R}}\left[\left(\partial_{\xi}^k\phi^{-\mathbf{X}}\right)^2+\tau \left(\partial_{\xi}^kr^{-\mathbf{X}}\right)^2\right]wd\xi+\int_{\mathbb{R}} \left(\partial_{\xi}^kr^{-\mathbf{X}}\right)^2wd\xi=\sum_{i=1}^{8}R_i^k,
			\end{aligned}
		\end{equation}
		where
		\begin{align*}
			&R_1^k=-\frac{\sigma}{2}\int_{\mathbb{R}}\left(\partial_{\xi}^k\phi^{-\mathbf{X}}\right)^2w'u_{\xi}^Sd\xi,\\
			&R_2^k=-\int_{\mathbb{R}}\partial_{\xi}^{k+1}\left(f\left(\phi^{-\mathbf{X}}+\tilde{u}\right)-f\left(\tilde{u}\right)\right)\partial_{\xi}^k\phi^{-\mathbf{X}}wd\xi,\\
			&R_3^k=-\dot{\mathbf{X}}(t)\int_{\mathbb{R}}\left(\partial_{\xi}^{k+1}u^{S}+\partial_{\xi}^{k+1}u^{R}\right)\partial_{\xi}^k\phi^{-\mathbf{X}}wd\xi,\\
			&R_4^k=-\int_{\mathbb{R}}\partial_{\xi}^kr^{-\mathbf{X}}\partial_{\xi}^k\phi^{-\mathbf{X}}w'u_{\xi}^Sd\xi,\ R_5^k=-\int_{\mathbb{R}}\partial_{\xi}^kF\partial_{\xi}^k\phi^{-\mathbf{X}}wd\xi,\\
			&R_6^k=-\frac{\tau\sigma}{2}\int_{\mathbb{R}}(\partial_{\xi}^kr^{-\mathbf{X}})^2w'u_{\xi}^Sd\xi,\ R_7^k=-\tau \dot{\mathbf{X}}(t) \int_{\mathbb{R}}\partial_{\xi}^{k+1}\tilde{q}\partial_{\xi}^kr^{-\mathbf{X}}wd\xi,\\
			&R_8^k=-\tau \int_{\mathbb{R}}\partial_{\xi}^{k+1}\left(u^{R}\right)_{t}\partial_{\xi}^kr^{-\mathbf{X}}wd\xi.
		\end{align*}
		It is easy to check that 
		\begin{equation}
			\begin{aligned}
				&\left|R_1^k\right|\leq Cu_m^4\int_{\mathbb{R}}\left(\partial_{\xi}^k\phi^{-\mathbf{X}}\right)^2wd\xi,\\
				&\left|R_3^k\right|\leq\frac{u_m^2}{81}\left|\dot{\mathbf{X}}(t)\right|^2+\left(Cu_m^5+C\delta_{R}\right)\int_{\mathbb{R}}\left(\partial_{\xi}^k\phi^{-\mathbf{X}}\right)^2wd\xi,\\
				&\left|R_4^k\right|\leq Cu_m^3\int_{\mathbb{R}}\left(\partial_{\xi}^kr^{-\mathbf{X}}\right)^2wd\xi+Cu_m^3\int_{\mathbb{R}}\left(\partial_{\xi}^k\phi^{-\mathbf{X}}\right)^2wd\xi,\\
				&\left|R_6^k\right|\leq  C\tau u_m^4\int_{\mathbb{R}}\left(\partial_{\xi}^kr^{-\mathbf{X}}\right)^2wd\xi,\\
				&\left|R_7^k\right|\leq\frac{\tau u_m^2}{192}\left|\dot{\mathbf{X}}(t)\right|^2+\left(C u_m+C\delta_{R}\right)\int_{\mathbb{R}}\left(\partial_{\xi}^kr^{-\mathbf{X}}\right)^2wd\xi.
			\end{aligned}
		\end{equation}
		For $R_2^k$, since
		\begin{align*}
			&\partial_{\xi}^{k+1}\left(f\left(\phi^{-\mathbf{X}}+\tilde{u}\right)-f\left(\tilde{u}\right)\right)\partial_{\xi}^k\phi^{-\mathbf{X}}\\
			=&\left(\frac{1}{2}f'(\phi^{-\mathbf{X}}+\tilde{u})(\partial_{\xi}^{k}\phi^{-\mathbf{X}})^{2}\right)_{\xi}\\
			&+\partial_{\xi}^{k+1}(f(\phi^{-\mathbf{X}}+\tilde{u})-f(\tilde{u}))\partial_{\xi}^{k}\phi^{-\mathbf{X}}-\left(\frac{1}{2}f'(\phi^{-\mathbf{X}}+\tilde{u})(\partial_{\xi}^{k}\phi^{-\mathbf{X}})^{2}\right)_{\xi}\\
			=:&\left(\frac{1}{2}f'(\phi^{-\mathbf{X}}+\tilde{u})(\partial_{\xi}^{k}\phi^{-\mathbf{X}})^{2}\right)_{\xi}+K_{k}(t,\xi).
		\end{align*}
		Thus, we get 
		\begin{align*}
			&\left|R_2^k\right|
			=\left|\int_{\mathbb{R}}\partial_{\xi}^{k+1}\left(f\left(\phi^{-\mathbf{X}}+\tilde{u}\right)-f\left(\tilde{u}\right)\right)\partial_{\xi}^k\phi^{-\mathbf{X}}wd\xi\right|\\
			\leq&\left|\int_{\mathbb{R}}\left(\frac{1}{2}f'(\phi^{-\mathbf{X}}+\tilde{u})(\partial_{\xi}^{k}\phi^{-\mathbf{X}})^{2}\right)_{\xi}wd\xi\right|+\left|\int_{\mathbb{R}}K_{k}(t,\xi)wd\xi\right|\\
			\leq&\left(Cu_m^2+C\delta_R+C\|\phi^{-\mathbf{X}}\|_{L^{\infty}(\mathbb{R})}^2\right)\int_{\mathbb{R}}(\partial_{\xi}^{k}\phi^{-\mathbf{X}})^{2}wd\xi+\left|\int_{\mathbb{R}}K_{k}(t,\xi)wd\xi\right|.
		\end{align*}
		Then we estimate $K_k(t,\xi)$, for $k=1,$ we have
		\begin{align*}
			&K_1(t,\xi)=\partial_{\xi}^{2}(f(\phi^{-\mathbf{X}}+\tilde{u})-f(\tilde{u}))\phi_{\xi}^{-\mathbf{X}}-(\frac{1}{2}f'(\phi^{-\mathbf{X}}+\tilde{u})(\phi_{\xi}^{-\mathbf{X}})^{2})_{\xi}\\
			=&(f''(\phi^{-\mathbf{X}}+\tilde{u})-f''(\tilde{u}))(\tilde{u}_{\xi})^{2}\phi_{\xi}^{-\mathbf{X}}+2f''(\phi^{-\mathbf{X}}+\tilde{u})\tilde{u}_{\xi}(\phi_{\xi}^{-\mathbf{X}})^{2}+f''(\phi^{-\mathbf{X}}+\tilde{u})(\phi_{\xi}^{-\mathbf{X}})^{3}\\
			&+(f'(\phi^{-\mathbf{X}}+\tilde{u})-f'(\tilde{u}))\tilde{u}_{\xi\xi}\phi_{\xi}^{-\mathbf{X}}-\frac{1}{2}f''(\phi^{-\mathbf{X}}+\tilde{u})(\phi_{\xi}^{-\mathbf{X}}+\tilde{u}_{\xi})(\phi_{\xi}^{-\mathbf{X}})^{2}\\
			=&6(\tilde{u}_{\xi})^2\phi^{-\mathbf{X}}\phi_{\xi}^{-\mathbf{X}}+9(\phi^{-\mathbf{X}}+\tilde{u})\tilde{u}_{\xi}(\phi_{\xi}^{-\mathbf{X}})^{2}+3(\phi^{-\mathbf{X}}+\tilde{u})(\phi_{\xi}^{-\mathbf{X}})^{3}+3(\phi^{-\mathbf{X}}+2\tilde{u})\phi^{-\mathbf{X}}\tilde{u}_{\xi\xi}\phi_{\xi}^{-\mathbf{X}},
		\end{align*}
		By the Lemma $\ref{lem1}$, we have
		\begin{align*}
			&\left|\int_{\mathbb{R}}K_1(t,\xi)wd\xi\right|
			\leq\left| 6\int_{\mathbb{R}}(\tilde{u}_{\xi})^2\phi^{-\mathbf{X}}\phi_{\xi}^{-\mathbf{X}}wd\xi\right|+\left|9\int_{\mathbb{R}}(\phi^{-\mathbf{X}}+\tilde{u})\tilde{u}_{\xi}(\phi_{\xi}^{-\mathbf{X}})^{2}wd\xi\right|\\
			&\quad+\left|3\int_{\mathbb{R}}(\phi^{-\mathbf{X}}+\tilde{u})(\phi_{\xi}^{-\mathbf{X}})^{3}wd\xi\right|+\left|3\int_{\mathbb{R}}(\phi^{-\mathbf{X}}+2\tilde{u})\phi^{-\mathbf{X}}\tilde{u}_{\xi\xi}\phi_{\xi}^{-\mathbf{X}}wd\xi\right|\\
			\leq&3\|\tilde{u}_{\xi}\|_{L^{\infty}(\mathbb{R})}\int_{\mathbb{R}}(\phi^{-\mathbf{X}})^2w\tilde{u}_{\xi}d\xi+3\|\tilde{u}_{\xi}\|_{L^{\infty}(\mathbb{R})}^2\int_{\mathbb{R}}(\phi_{\xi}^{-\mathbf{X}})^2wd\xi\\
			&+9\|\tilde{u}_{\xi}\|_{L^{\infty}(\mathbb{R})}\left(\|\phi^{-\mathbf{X}}\|_{L^{\infty}(\mathbb{R})}+\|\tilde{u}\|_{L^{\infty}(\mathbb{R})}\right)\int_{\mathbb{R}}(\phi_{\xi}^{-\mathbf{X}})^{2}wd\xi+C\|\phi_{\xi}^{-\mathbf{X}}\|_{L^{\infty}(\mathbb{R})}\int_{\mathbb{R}}(\phi_{\xi}^{-\mathbf{X}})^{2}wd\xi\\
			&+15u_m^2\left(\|\phi^{-\mathbf{X}}\|_{L^{\infty}(\mathbb{R})}+2\|\tilde{u}\|_{L^{\infty}(\mathbb{R})}\right)\|u^S_{\xi}\|_{L^{\infty}(\mathbb{R})}\int_{\mathbb{R}}(\phi_{\xi}^{-\mathbf{X}})^{2}wd\xi\\
			&+15u_m^2\left(\|\phi^{-\mathbf{X}}\|_{L^{\infty}(\mathbb{R})}+2\|\tilde{u}\|_{L^{\infty}(\mathbb{R})}\right)\int_{\mathbb{R}}(\phi^{-\mathbf{X}})^2wu_{\xi}^Sd\xi\\
			&+3\left(\|\phi^{-\mathbf{X}}\|_{L^{\infty}(\mathbb{R})}+2\|\tilde{u}\|_{L^{\infty}(\mathbb{R})}\right)\frac{\|\phi^{-\mathbf{X}}\|_{L^{\infty}(\mathbb{R})}}{2}\left(\int_{\mathbb{R}}(\phi_{\xi}^{-\mathbf{X}})^{2}wd\xi+Cu_m^2\delta_{R}^{\frac{2}{3}}(1+t)^{-\frac{4}{3}}\right)\\
			\leq&(Cu_m^3+A_1)u_m^2\int_{\mathbb{R}}(\phi^{-\mathbf{X}})^2u^S_{\xi}d\xi+(Cu_m^3+C\delta_R)\int_{\mathbb{R}}(\phi^{-\mathbf{X}})^2wu^R_{\xi}d\xi\\
			&+(Cu_m^4+A_1+C\|\phi_{\xi}^{-\mathbf{X}}\|_{L^{\infty}(\mathbb{R})})\int_{\mathbb{R}}(\phi_{\xi}^{-\mathbf{X}})^{2}wd\xi+Cu_m^2\delta_{R}^{\frac{2}{3}}(1+t)^{-\frac{4}{3}},
		\end{align*}
		where
		\begin{align*}
			&\left|3\int_{\mathbb{R}}(\phi^{-\mathbf{X}}+2\tilde{u})\phi^{-\mathbf{X}}\tilde{u}_{\xi\xi}\phi_{\xi}^{-\mathbf{X}}wd\xi\right|\\
			\leq&3\left(\|\phi^{-\mathbf{X}}\|_{L^{\infty}(\mathbb{R})}+2\|\tilde{u}\|_{L^{\infty}(\mathbb{R})}\right)\left[\int_{\mathbb{R}}10u_m^2u_{\xi}^S\phi^{-\mathbf{X}}\phi_{\xi}^{-\mathbf{X}}wd\xi+\int_{\mathbb{R}}u_{\xi\xi}^R\phi^{-\mathbf{X}}\phi_{\xi}^{-\mathbf{X}}wd\xi\right]\\
			\leq&15u_m^2\left(\|\phi^{-\mathbf{X}}\|_{L^{\infty}(\mathbb{R})}+2\|\tilde{u}\|_{L^{\infty}(\mathbb{R})}\right)\left[\int_{\mathbb{R}}(\phi^{-\mathbf{X}})^2wu_{\xi}^Sd\xi+\|u^S_{\xi}\|_{L^{\infty}(\mathbb{R})}\int_{\mathbb{R}}(\phi_{\xi}^{-\mathbf{X}})^2wd\xi\right]\\
			&+\frac{3}{2}\left(\|\phi^{-\mathbf{X}}\|_{L^{\infty}(\mathbb{R})}+2\|\tilde{u}\|_{L^{\infty}(\mathbb{R})}\right)\|\phi^{-\mathbf{X}}\|_{L^{\infty}(\mathbb{R})}\left[\int_{\mathbb{R}}(\phi_{\xi}^{-\mathbf{X}})^2wd\xi+\int_{\mathbb{R}}(u_{\xi\xi}^R)^2wd\xi\right].
		\end{align*}
		Similarly, for $k=2$, we have
		\begin{align*}
			K_2(t,\xi)=&\left[f'''(\phi^{-\mathbf{X}}+\tilde{u})-f'''(\tilde{u})\right](\tilde{u}_{\xi})^3\phi^{-\mathbf{X}}_{\xi\xi}+3f'''(\phi^{-\mathbf{X}}+\tilde{u})(\tilde{u}_{\xi})^{2}\phi_{\xi}^{-\mathbf{X}}\phi_{\xi\xi}^{-\mathbf{X}}
			\\
			&+3f'''(\phi^{-\mathbf{X}}+\tilde{u})\tilde{u}_{\xi}(\phi_{\xi}^{-\mathbf{X}})^{2}\phi_{\xi\xi}^{-\mathbf{X}}+f'''(\phi^{-\mathbf{X}}+\tilde{u})(\phi_{\xi}^{-\mathbf{X}})^{3}\phi_{\xi\xi}^{-\mathbf{X}}\\
			&+3[f''(\phi^{-\mathbf{X}}+\tilde{u})-f''(\tilde{u})]\tilde{u}_{\xi}\tilde{u}_{\xi\xi}\phi_{\xi\xi}^{-\mathbf{X}}+\frac{5}{2}f''(\phi^{-\mathbf{X}}+\tilde{u})\tilde{u}_{\xi}(\phi_{\xi\xi}^{-\mathbf{X}})^{2}\\
			&+3f''(\phi^{-\mathbf{X}}+\tilde{u})\tilde{u}_{\xi\xi}\phi_{\xi}^{-\mathbf{X}}\phi_{\xi\xi}^{-\mathbf{X}}+\frac{5}{2}f''(\phi^{-\mathbf{X}}+\tilde{u})\phi_{\xi}^{-\mathbf{X}}(\phi_{\xi\xi}^{-\mathbf{X}})^{2}\\
			&+[f'(\phi^{-\mathbf{X}}+\tilde{u})-f'(\tilde{u})]\tilde{u}_{\xi\xi\xi}\phi_{\xi\xi}^{-\mathbf{X}}\\
			=&18(\tilde{u}_{\xi})^{2}\phi_{\xi}^{-\mathbf{X}}\phi_{\xi\xi}^{-\mathbf{X}}+18\tilde{u}_{\xi}(\phi_{\xi}^{-\mathbf{X}})^{2}\phi_{\xi\xi}^{-\mathbf{X}}+6(\phi_{\xi}^{-\mathbf{X}})^{3}\phi_{\xi\xi}^{-\mathbf{X}}+18\tilde{u}_{\xi}\tilde{u}_{\xi\xi}\phi^{-\mathbf{X}}\phi_{\xi\xi}^{-\mathbf{X}}\\
			&+15(\phi^{-\mathbf{X}}+\tilde{u})\tilde{u}_{\xi}(\phi_{\xi\xi}^{-\mathbf{X}})^{2}+18(\phi^{-\mathbf{X}}+\tilde{u})\tilde{u}_{\xi\xi}\phi_{\xi}^{-\mathbf{X}}\phi_{\xi\xi}^{-\mathbf{X}}\\
			&+15(\phi^{-\mathbf{X}}+\tilde{u})\phi_{\xi}^{-\mathbf{X}}(\phi_{\xi\xi}^{-\mathbf{X}})^{2}+3(\phi^{-\mathbf{X}}+2\tilde{u})\phi^{-\mathbf{X}}\tilde{u}_{\xi\xi\xi}\phi_{\xi\xi}^{-\mathbf{X}}.
		\end{align*}
		It is easy to show
		\begin{align*}
			&\int_{\mathbb{R}}18(\tilde{u}_{\xi})^{2}\phi_{\xi}^{-\mathbf{X}}\phi_{\xi\xi}^{-\mathbf{X}}wd\xi\leq 9\|\tilde{u}_{\xi}\|_{L^{\infty}(\mathbb{R})}^2\int_{\mathbb{R}}(\phi_{\xi}^{-\mathbf{X}})^2wd\xi+9\|\tilde{u}_{\xi}\|_{L^{\infty}(\mathbb{R})}^2\int_{\mathbb{R}}(\phi_{\xi\xi}^{-\mathbf{X}})^2wd\xi,\\
			&\int_{\mathbb{R}}18\tilde{u}_{\xi}(\phi_{\xi}^{-\mathbf{X}})^{2}\phi_{\xi\xi}^{-\mathbf{X}}wd\xi\leq 9\|\tilde{u}_{\xi}\|_{L^{\infty}(\mathbb{R})}\|\phi^{-\mathbf{X}}_{\xi}\|_{L^{\infty}(\mathbb{R})}\int_{\mathbb{R}}(\phi_{\xi}^{-\mathbf{X}})^2wd\xi\\&\ \ \ \ \ \ \ \ \ \ \ \ \ \ \ \ \ \ \ \ \ \ \ \ \ \ \ \ \ \ \ +9\|\tilde{u}_{\xi}\|_{L^{\infty}(\mathbb{R})}\|\phi^{-\mathbf{X}}_{\xi}\|_{L^{\infty}(\mathbb{R})}\int_{\mathbb{R}}(\phi_{\xi\xi}^{-\mathbf{X}})^2wd\xi,\\
			&\int_{\mathbb{R}}6(\phi_{\xi}^{-\mathbf{X}})^{3}\phi_{\xi\xi}^{-\mathbf{X}}wd\xi\leq 3\|\phi^{-\mathbf{X}}_{\xi}\|_{L^{\infty}(\mathbb{R})}^2\int_{\mathbb{R}}(\phi_{\xi}^{-\mathbf{X}})^2wd\xi+3\|\phi^{-\mathbf{X}}_{\xi}\|_{L^{\infty}(\mathbb{R})}^2\int_{\mathbb{R}}(\phi_{\xi\xi}^{-\mathbf{X}})^2wd\xi,\\
			&\int_{\mathbb{R}}18\tilde{u}_{\xi}\tilde{u}_{\xi\xi}\phi^{-\mathbf{X}}\phi_{\xi\xi}^{-\mathbf{X}}wd\xi\leq9\|\tilde{u}_{\xi\xi}\|_{L^{\infty}(\mathbb{R})}\int_{\mathbb{R}}(\phi^{-\mathbf{X}})^2w\left(u^S_{\xi}+u^R_{\xi}\right)d\xi\\
			&\ \ \ \ \ \ \ \ \ \ \ \ \ \ \ \ \ \ \ \ \ \ \ \ \ \ \ \ \ \ \ +9\|\tilde{u}_{\xi}\|_{L^{\infty}(\mathbb{R})}\|\tilde{u}_{\xi\xi}\|_{L^{\infty}(\mathbb{R})}\int_{\mathbb{R}}(\phi_{\xi\xi}^{-\mathbf{X}})^2wd\xi,\\
			&\int_{\mathbb{R}}15(\phi^{-\mathbf{X}}+\tilde{u})\tilde{u}_{\xi}(\phi_{\xi\xi}^{-\mathbf{X}})^{2}wd\xi\leq15\|\tilde{u}_{\xi}\|_{L^{\infty}(\mathbb{R})}\left(\|\phi^{-\mathbf{X}}+\tilde{u}\|_{L^{\infty}(\mathbb{R})}\right)\int_{\mathbb{R}}(\phi_{\xi\xi}^{-\mathbf{X}})^{2}wd\xi,\\
			&\int_{\mathbb{R}}18(\phi^{-\mathbf{X}}+\tilde{u})\tilde{u}_{\xi\xi}\phi_{\xi}^{-\mathbf{X}}\phi_{\xi\xi}^{-\mathbf{X}}wd\xi\leq 9\|\tilde{u}_{\xi\xi}\|_{L^{\infty}(\mathbb{R})}\left(\|\phi^{-\mathbf{X}}+\tilde{u}\|_{L^{\infty}(\mathbb{R})}\right)\int_{\mathbb{R}}(\phi_{\xi}^{-\mathbf{X}})^{2}wd\xi\\
			&\ \ \ \ \ \ \ \ \ \ \ \ \ \ \ \ \ \ \ \ \ \ \ \ \ \ \ \ \ \ \ \ \ \ \ \ \ \ \ +9\|\tilde{u}_{\xi\xi}\|_{L^{\infty}(\mathbb{R})}\left(\|\phi^{-\mathbf{X}}+\tilde{u}\|_{L^{\infty}(\mathbb{R})}\right)\int_{\mathbb{R}}(\phi_{\xi\xi}^{-\mathbf{X}})^{2}wd\xi,\\
			&\int_{\mathbb{R}}15(\phi^{-\mathbf{X}}+\tilde{u})\phi_{\xi}^{-\mathbf{X}}(\phi_{\xi\xi}^{-\mathbf{X}})^{2}wd\xi\leq15\left(\|\phi^{-\mathbf{X}}+\tilde{u}\|_{L^{\infty}(\mathbb{R})}\right)\|\phi_{\xi}^{-\mathbf{X}}\|_{L^{\infty}(\mathbb{R})}\int_{\mathbb{R}}(\phi_{\xi\xi}^{-\mathbf{X}})^{2}wd\xi,
		\end{align*}
		and
		\begin{align*}
			&\int_{\mathbb{R}}3(\phi^{-\mathbf{X}}+2\tilde{u})\phi^{-\mathbf{X}}\tilde{u}_{\xi\xi\xi}\phi_{\xi\xi}^{-\mathbf{X}}wd\xi\\
			\leq&3\left(\|\phi^{-\mathbf{X}}+2\tilde{u}\|_{L^{\infty}(\mathbb{R})}\right)\int_{\mathbb{R}}\left(u^S_{\xi\xi\xi}+u^R_{\xi\xi\xi}\right)\phi^{-\mathbf{X}}\phi_{\xi\xi}^{-\mathbf{X}}wd\xi\\
			\leq&\frac{3}{2}\left(\|\phi^{-\mathbf{X}}+2\tilde{u}\|_{L^{\infty}(\mathbb{R})}\right)\|u^S_{\xi\xi\xi}\|_{L^{\infty}(\mathbb{R})}\int_{\mathbb{R}}(\phi^{-\mathbf{X}})^2wu^S_{\xi}d\xi\\
			&+\frac{3}{2}\left(\|\phi^{-\mathbf{X}}+2\tilde{u}\|_{L^{\infty}(\mathbb{R})}\right)\left\lVert\frac{u^S_{\xi\xi\xi}}{u^S_{\xi}}\right\rVert_{L^{\infty}(\mathbb{R})}\int_{\mathbb{R}}(\phi_{\xi\xi}^{-\mathbf{X}})^{2}wd\xi\\
			&+\frac{3}{2}\left(\|\phi^{-\mathbf{X}}+2\tilde{u}\|_{L^{\infty}(\mathbb{R})}\right)\|\phi^{-\mathbf{X}}\|_{L^{\infty}(\mathbb{R})}\int_{\mathbb{R}}(\phi_{\xi\xi}^{-\mathbf{X}})^{2}wd\xi\\
			&+\frac{3}{2}\left(\|\phi^{-\mathbf{X}}+2\tilde{u}\|_{L^{\infty}(\mathbb{R})}\right)\|\phi^{-\mathbf{X}}\|_{L^{\infty}(\mathbb{R})}\int_{\mathbb{R}}(u^R_{\xi\xi\xi})^2wd\xi.
		\end{align*}
		Then we get
		\begin{align*}
			\left|\int_{\mathbb{R}}K_2(t,\xi)wd\xi\right|\leq& (Cu_m+A_1)u_m^7\int_{\mathbb{R}}(\phi^{-\mathbf{X}})^2u^S_{\xi}d\xi+(Cu_m^5+C\delta_R)\int_{\mathbb{R}}(\phi^{-\mathbf{X}})^2wu^R_{\xi}d\xi\\
			&+\left(Cu_m^6+A_1+C\|\phi_{\xi}^{-\mathbf{X}}\|_{L^{\infty}(\mathbb{R})}\right)\int_{\mathbb{R}}(\phi_{\xi}^{-\mathbf{X}})^2wd\xi\\
			&+\left(Cu_m^4+A_1+C\|\phi_{\xi}^{-\mathbf{X}}\|_{L^{\infty}(\mathbb{R})}\right)\int_{\mathbb{R}}(\phi_{\xi\xi}^{-\mathbf{X}})^2wd\xi+Cu_m^2\delta_{R}^{\frac{2}{3}}(1+t)^{-\frac{4}{3}}.
		\end{align*}
		Hence, we have
		\begin{align*}
			\left|R_2^k\right|\leq &(Cu_m^3+A_1)\int_{\mathbb{R}}(\phi^{-\mathbf{X}})^2wu^R_{\xi}d\xi+(Cu_m^3+A_1)u_m^2\int_{\mathbb{R}}(\phi^{-\mathbf{X}})^2u^S_{\xi}d\xi\\
			&+\left(Cu_m^2+A_1+C\|\phi_{\xi}^{-\mathbf{X}}\|_{L^{\infty}(\mathbb{R})}\right)\sum_{i=1}^{k}\int_{\mathbb{R}}(\partial_{\xi}^i\phi^{-\mathbf{X}})^2wd\xi+Cu_m^2\delta_R^{\frac{2}{3}}(1+t)^{-\frac{4}{3}}.
		\end{align*}
		For $R_5^k,$ we have
		\begin{align*}
			\left|R_5^k\right|=\left|\int_{\mathbb{R}}\partial_{\xi}^kF\partial_{\xi}^k\phi^{-\mathbf{X}}wd\xi\right|
			\leq&Cu_m\int_{\mathbb{R}}(\partial_{\xi}^k\phi^{-\mathbf{X}})^2wd\xi+Cu_m\int_{\mathbb{R}}\left|\partial_{\xi}^kF\right|^2d\xi\\
			\leq&Cu_m\int_{\mathbb{R}}(\partial_{\xi}^k\phi^{-\mathbf{X}})^2wd\xi+Cu_m^2\delta_{R}^{\frac{1}{6}}(1+t)^{-\frac{6}{5}}. 
		\end{align*}
		For $R_8^k$, we have $(u^R)_t=\sigma u^R_{\xi}+f(u^R)_{\xi},$ then
		\begin{align*}
			\left|R_8^k\right|=&\left|-\tau\sigma\int_{\mathbb{R}}\partial_{\xi}^{k+2}u^R\partial_{\xi}^kr^{-\mathbf{X}}wd\xi-\tau\int_{\mathbb{R}}\partial_{\xi}^{k+2}\left((u^R)^3\right)\partial_{\xi}^{k}r^{-\mathbf{X}}wd\xi\right|\\
			\leq&\frac{\tau\sigma^2}{2}\int_{\mathbb{R}}\left(\partial_{\xi}^{k+2}u^R\right)^2d\xi+\frac{15\tau}{2}u_m^2\int_{\mathbb{R}}\left(\partial_{\xi}^kr^{-\mathbf{X}}\right)^2wd\xi+\frac{\tau}{2}\int_{\mathbb{R}}\left(\partial_{\xi}^{k+2}\left((u^R)^3\right)\right)^2d\xi\\
			\leq&C\tau u_m^2\int_{\mathbb{R}}\left(\partial_{\xi}^kr^{-\mathbf{X}}\right)^2wd\xi+C\delta_{R}^{\frac{2}{3}}(1+t)^{-\frac{4}{3}},
		\end{align*}
		where we use Lemma $\ref{lem1}$. Without loss of generality, we set $\delta_{0}=4u_m,\ \varepsilon_{3}=\varepsilon_{2}=\delta_{0}^2.$ Given the smallness of $\delta_{0}$ and $\varepsilon_{3}$, the conclusion of Lemma \ref{lem8} follows upon integrating \eqref{gjfc} from $0$ to $t$.
	\end{proof}
	
	\subsection{Dissipative estimates}
	In this section, we prove dissipative estimates. To this end, we first introduce the following lemma.
	\begin{lemma}\label{lem9}
		There exist positive constants $\delta_{0},\varepsilon_{4}$, if the rarefaction wave and shock wave strength $\delta_{R},\delta_{S}\leq\delta_{0}$, and $$\sup_{0\leq t\leq T} \|\phi^{-\mathbf{X}}(t,\cdot), \sqrt{\tau}r^{-\mathbf{X}}(t,\cdot)\|_{H^2(\mathbb{R})} \leq \varepsilon_{4},$$ then there exist positive constants $C,\varepsilon_*$ such that for $0\leq t\leq T,$ the following properties hold true:
		\begin{equation}\label{de}
			\begin{aligned}
				&\int_{0}^t\|\partial_{\xi}\phi^{-\mathbf{X}}\sqrt{w}\|_{H^1(\mathbb{R})}^2ds\leq Cu_m^2\left(\|r_0\|_{H^1(\mathbb{R})}^2+\|\partial_{\xi}\phi_{0}\|_{H^1}^2\right)+6\tau u_m^2\int_{0}^t\left|\dot{\mathbf{X}}(s)\right|^2ds+Cu_m^2\delta_{R}^{\frac{4}{7}}\\
				&+\varepsilon_*\tau\|r^{-\mathbf{X}}\sqrt{w}\|_{H^1(\mathbb{R})}^2+C(\varepsilon_*)\tau\|\partial_{\xi}\phi^{-\mathbf{X}}\sqrt{w}\|_{H^1(\mathbb{R})}^2+\tau (Cu_m^3+A_1)\int_{0}^t\int_{\mathbb{R}}(\phi^{-\mathbf{X}})^2wu^R_{\xi}d\xi ds\\
				&+\tau (Cu_m^2+A_1)u_m^2\int_{0}^t\int_{\mathbb{R}}(\phi^{-\mathbf{X}})^2u^S_{\xi}d\xi ds+\bigg(\frac{8}{7}+3\tau+C\tau u_m^3+\tau A_1\bigg)\int_{0}^t\|\partial_{\xi}r^{-\mathbf{X}}\sqrt{w}\|_{H^1(\mathbb{R})}^2ds.
			\end{aligned}
		\end{equation}
		where $A_1=C\delta_R+C\|\phi^{-\mathbf{X}}\|_{L^{\infty}(\mathbb{R})}$.
	\end{lemma}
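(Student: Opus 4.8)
The plan is to extract the missing dissipation for $\partial_\xi\phi^{-\mathbf{X}}$ directly from Cattaneo's law. Working from the differentiated, shifted system \eqref{HPFC} with $k=0,1$, I would first rewrite its second equation as
\[
\partial_\xi^{k+1}\phi^{-\mathbf{X}} = \tau\,\partial_\xi^{k}r_{t}^{-\mathbf{X}} - \tau\sigma\,\partial_\xi^{k+1}r^{-\mathbf{X}} + \partial_\xi^{k}r^{-\mathbf{X}} + \tau\dot{\mathbf{X}}(t)\,\partial_\xi^{k+1}\tilde q + \tau\,\partial_\xi^{k+1}(u^{R})_{t},
\]
which shows that $\partial_\xi^{k+1}\phi^{-\mathbf{X}}$ is controlled by $r^{-\mathbf{X}}$, its spatial derivatives, one time derivative $\partial_\xi^{k}r_{t}^{-\mathbf{X}}$, and lower-order corrections that are small in $\tau$, $\delta_R$ or $|\dot{\mathbf{X}}(t)|$. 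To dispose of the time derivative I would introduce the cross interaction functional
\[
\mathcal{E}(t) := \sum_{k=0}^{1}\tau\int_{\mathbb{R}}\partial_\xi^{k+1}\phi^{-\mathbf{X}}\,\partial_\xi^{k}r^{-\mathbf{X}}\,w\,d\xi
\]
and differentiate it in $t$. In the piece $\tau\int\partial_\xi^{k+1}\phi^{-\mathbf{X}}\,\partial_\xi^{k}r_{t}^{-\mathbf{X}}\,w$ I substitute the second equation of \eqref{HPFC}: its $\partial_\xi^{k+1}\phi^{-\mathbf{X}}$-contribution produces exactly $\int_{\mathbb{R}}|\partial_\xi^{k+1}\phi^{-\mathbf{X}}|^2w\,d\xi$, with coefficient \emph{one and independent of $\tau$}, because the factor $\tau$ built into $\mathcal{E}(t)$ cancels the relaxation time in front of $r_t$. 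In the remaining piece $\tau\int\partial_\xi^{k+1}\phi_{t}^{-\mathbf{X}}\,\partial_\xi^{k}r^{-\mathbf{X}}\,w$ I integrate by parts in $\xi$ so that derivatives fall on $r^{-\mathbf{X}}$ and on $w$ (recalling $w_\xi=w'u_\xi^S$; boundary terms vanish by the $H^2$ decay), and then substitute the first equation of \eqref{HPFC} for $\partial_\xi^{k}\phi_t^{-\mathbf{X}}$. After reorganizing one arrives at
\[
\frac{d}{dt}\mathcal{E}(t) = \sum_{k=0}^{1}\int_{\mathbb{R}}\bigl|\partial_\xi^{k+1}\phi^{-\mathbf{X}}\bigr|^2 w\,d\xi + \mathcal{R}(t).
\]

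The remainder $\mathcal{R}(t)$ splits into a handful of groups, each of which I would estimate by a term already present on the right of \eqref{de}. The quadratic terms in $\partial_\xi^{k+1}r^{-\mathbf{X}}$ and $\partial_\xi^{k}r^{-\mathbf{X}}$, together with the genuinely $\tau$-free coupling $\int\partial_\xi^{k}r^{-\mathbf{X}}\partial_\xi^{k+1}\phi^{-\mathbf{X}}w$, are handled by Young's inequality: moving a fixed fraction ($\tfrac18$) of $\int|\partial_\xi^{k+1}\phi^{-\mathbf{X}}|^2w$ back to the left-hand side and dividing accounts for the coefficient $\tfrac87$ in front of $\int_0^t\|\partial_\xi r^{-\mathbf{X}}\sqrt w\|_{H^1}^2$, while the companion $r$-terms fall into the same bucket (the extra $3\tau+C\tau u_m^3+\tau A_1$). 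The terms proportional to $\dot{\mathbf{X}}(t)$ collect into $6\tau u_m^2|\dot{\mathbf{X}}(t)|^2$ plus pieces absorbed by the good terms; the nonlinear contributions $\partial_\xi^{k+1}\bigl(f(\phi^{-\mathbf{X}}+\tilde u)-f(\tilde u)\bigr)$, via the Leibniz rule and one integration by parts, reduce to $w$-weighted integrals of $(\phi^{-\mathbf{X}})^2(u_\xi^S+u_\xi^R)$ and of $(\partial_\xi^{i}\phi^{-\mathbf{X}})^2$, $i=1,2$, with the coefficients displayed in \eqref{de} together with the $\tau$-factors inherited from $\mathcal{E}(t)$; the forcing terms $\partial_\xi^{k}F$ are controlled by Lemma \ref{lemf}; and the shock--rarefaction interaction remainders are bounded through Lemmas \ref{lem1} and \ref{WIE}, producing the $Cu_m^2\delta_R^{4/7}$. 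Finally I would integrate over $[0,t]$ and estimate the endpoint functional, using $\tfrac{15}{8}u_m^2\le w<\tfrac{15}{2}u_m^2$ from \eqref{WFP} and Young's inequality, as
\[
|\mathcal{E}(t)| \le \varepsilon_*\tau\|r^{-\mathbf{X}}\sqrt w\|_{H^1}^2 + C(\varepsilon_*)\tau\|\partial_\xi\phi^{-\mathbf{X}}\sqrt w\|_{H^1}^2, \qquad |\mathcal{E}(0)| \le Cu_m^2\bigl(\|r_0\|_{H^1}^2 + \|\partial_\xi\phi_0\|_{H^1}^2\bigr),
\]
which, after also using the smallness of $\delta_0,\varepsilon_4$ and the hypothesis \eqref{tau} to keep all $\tau$-dependent coefficients below the required thresholds, yields \eqref{de}.

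The main difficulty is twofold. First, the dissipation $\int_{\mathbb{R}}|\partial_\xi^{k+1}\phi^{-\mathbf{X}}|^2w\,d\xi$ must come out with coefficient one \emph{uniformly in $\tau$}: this is what keeps the estimate non-vacuous after $C(\varepsilon_*)\tau\|\partial_\xi\phi^{-\mathbf{X}}\sqrt w\|_{H^1}^2$ is transferred to the left in the final bootstrap, and it forces the precise $\tau$-weighting of $\mathcal{E}(t)$; one cannot instead rely on $f'(\phi^{-\mathbf{X}}+\tilde u)=3(\phi^{-\mathbf{X}}+\tilde u)^2$ as a dissipation, since this coefficient is only degenerately nonnegative. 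Second, the bookkeeping in $\mathcal{R}(t)$ must certify that \emph{every} leftover term is of one of the admissible types on the right of \eqref{de} — small in $\tau$ (absorbed via \eqref{tau}), small in $\delta_R$ or $\|\phi^{-\mathbf{X}}\|_{L^\infty}$ (the $A_1$-type coefficients), a multiple of $\int_{\mathbb{R}}(\phi^{-\mathbf{X}})^2u_\xi^S\,d\xi$ or $\int_{\mathbb{R}}(\phi^{-\mathbf{X}})^2wu_\xi^R\,d\xi$ (controlled by the good terms of Lemma \ref{lem2}), a multiple of $\|\partial_\xi r^{-\mathbf{X}}\sqrt w\|_{H^1}^2$ (controlled through Lemmas \ref{lem2} and \ref{lem8}), a multiple of $|\dot{\mathbf{X}}(t)|^2$, or an integrable-in-time wave-interaction term; the apparent circularity in these absorptions is deliberate and is resolved only in Proposition \ref{p1}, by taking a suitable linear combination of Lemmas \ref{lem2}, \ref{lem8} and \ref{lem9}.
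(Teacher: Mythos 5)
Your proposal is correct and takes essentially the same route as the paper: the cross functional $\mathcal{E}(t)=\sum_{k=0}^{1}\tau\int_{\mathbb{R}}\partial_\xi^{k+1}\phi^{-\mathbf{X}}\partial_\xi^{k}r^{-\mathbf{X}}\,w\,d\xi$ that you differentiate is exactly what appears as the time-boundary term when the paper multiplies $(\ref{HPFC})_2$ by $-w\partial_{\xi}^{k+1}\phi^{-\mathbf{X}}$, integrates the $\tau r_t$ piece by parts in time (the paper's $N_1^k$), substitutes the $\phi$-equation for $\partial_\xi^{k+1}\phi_t^{-\mathbf{X}}$, and integrates by parts in $\xi$ to combine with $N_2^k$. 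Your identification of the $\tau$-normalization as the mechanism that extracts $\int(\partial_\xi^{k+1}\phi^{-\mathbf{X}})^2w$ with a $\tau$-independent unit coefficient, the Young-inequality splitting that produces the $\tfrac{8}{7}$ coefficient, and the deferral of the absorption to Proposition \ref{p1} are all in agreement with the paper's argument.
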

	\begin{proof}
		Multiplying the equation $(\ref{HPFC})_2$ by $-w\partial_{\xi}^{k+1}\phi^{-\mathbf{X}}$ for $k=0,1$, then integrating over $(0,t)\times\mathbb{R}$, we have
		\begin{align*}
			\int_{0}^t\int_{\mathbb{R}}(\partial_{\xi}^{k+1}\phi^{-\mathbf{X}})^2wd\xi ds=\sum_{i=1}^{5}N_i^k,
		\end{align*}
		where
		\begin{align*}
			&N_1^k=\tau\int_{0}^t\int_{\mathbb{R}}\partial_{\xi}^kr^{-\mathbf{X}}_t\partial_{\xi}^{k+1}\phi^{-\mathbf{X}}wd\xi ds,\\
			&N_2^k=-\tau\sigma\int_{0}^t\int_{\mathbb{R}}\partial_{\xi}^{k+1}r^{-\mathbf{X}}\partial_{\xi}^{k+1}\phi^{-\mathbf{X}}wd\xi ds,\\
			&N_3^k=\int_{0}^t\int_{\mathbb{R}}\partial_{\xi}^kr^{-\mathbf{X}}\partial_{\xi}^{k+1}\phi^{-\mathbf{X}}wd\xi ds,\\
			&N_4^k=\tau\dot{\mathbf{X}}\int_{0}^t\int_{\mathbb{R}}\partial_{\xi}^{k+1}\tilde{q}\partial_{\xi}^{k+1}\phi^{-\mathbf{X}}wd\xi ds,\\
			&N_5^k=\tau\int_{0}^t\int_{\mathbb{R}}\partial_{\xi}^{k+1}(u^R)_t\partial_{\xi}^{k+1}\phi^{-\mathbf{X}}wd\xi ds.
		\end{align*}
		Using the equation $(\ref{PFC})_1$, we get 
		\begin{align*}
			&N_1^k+N_2^k\\
			=&\tau\int_{\mathbb{R}}\partial_{\xi}^kr^{-\mathbf{X}}\partial_{\xi}^{k+1}\phi^{-\mathbf{X}}wd\xi-\tau\int_{\mathbb{R}}\partial_{\xi}^kr_0\partial_{\xi}^{k+1}\phi_{0}wd\xi+\tau\sigma\int_{0}^t\int_{\mathbb{R}}\partial_{\xi}^kr^{-\mathbf{X}}\partial_{\xi}^{k+1}\phi^{-\mathbf{X}}w'u^S_{\xi}d\xi ds\\
			&-\tau\int_{0}^t\int_{\mathbb{R}}\partial_{\xi}^kr^{-\mathbf{X}}\partial_{\xi}^{k+1}\left[-F-\left(f(\phi^{-\mathbf{X}}+\tilde{u})-f(\tilde{u})\right)_{\xi}-\dot{\mathbf{X}}(s)\left(u^S_{\xi}+u^R_{\xi}\right)+r^{-\mathbf{X}}_{\xi}\right]wd\xi ds\\
			=&\tau\int_{\mathbb{R}}\partial_{\xi}^kr^{-\mathbf{X}}\partial_{\xi}^{k+1}\phi^{-\mathbf{X}}wd\xi-\tau\int_{\mathbb{R}}\partial_{\xi}^kr_0\partial_{\xi}^{k+1}\phi_{0}wd\xi+\tau\int_0^t\int_{\mathbb{R}}\partial_{\xi}^kr^{-\mathbf{X}}\partial_{\xi}^{k+1}Fwd\xi ds\\
			&+\tau\int_0^t\int_{\mathbb{R}}\partial_{\xi}^kr^{-\mathbf{X}}\partial_{\xi}^{k+2}\left(f(\phi^{-\mathbf{X}}+\tilde{u})-f(\tilde{u})\right)wd\xi ds+\tau\sigma\int_{0}^t\int_{\mathbb{R}}\partial_{\xi}^kr^{-\mathbf{X}}\partial_{\xi}^{k+1}\phi^{-\mathbf{X}}w'u^S_{\xi}d\xi ds\\
			&+\tau\int_0^t\dot{\mathbf{X}}(s)\int_{\mathbb{R}}\partial_{\xi}^kr^{-\mathbf{X}}\partial_{\xi}^{k+2}(u^S+u^R)wd\xi ds-\tau\int_0^t\int_{\mathbb{R}}\partial_{\xi}^kr^{-\mathbf{X}}\partial_{\xi}^{k+2}r^{-\mathbf{X}}wd\xi ds.
		\end{align*} 
		By Young's inequality, we have
		\begin{align*}
			\tau\int_0^t\int_{\mathbb{R}}\partial_{\xi}^kr^{-\mathbf{X}}\partial_{\xi}^{k+1}Fwd\xi ds
			\leq&\frac{\tau}{2}\int_{0}^t\int_{\mathbb{R}}(\partial_{\xi}^kr^{-\mathbf{X}})^2wd\xi ds+Cu_m^2\delta_R^{\frac{4}{7}},
		\end{align*}
		and
		\begin{align*}
			&\tau\sigma\int_{0}^t\int_{\mathbb{R}}\partial_{\xi}^kr^{-\mathbf{X}}\partial_{\xi}^{k+1}\phi^{-\mathbf{X}}w'u^S_{\xi}d\xi ds\\
			\leq&\frac{\tau\sigma}{2}\left\lVert\frac{w'}{w}\right\rVert_{L^{\infty}(\mathbb{R})}\|u^S_{\xi}\|_{L^{\infty}(\mathbb{R})}\int_{0}^t\int_{\mathbb{R}}(\partial_{\xi}^kr^{-\mathbf{X}})^2wd\xi ds\\
			&+\frac{\tau\sigma}{2}\left\lVert\frac{w'}{w}\right\rVert_{L^{\infty}(\mathbb{R})}\|u^S_{\xi}\|_{L^{\infty}(\mathbb{R})}\int_{0}^t\int_{\mathbb{R}}(\partial_{\xi}^{k+1}\phi^{-\mathbf{X}})^2wd\xi ds\\
			\leq&C\tau u_m^4\int_{0}^t\int_{\mathbb{R}}(\partial_{\xi}^kr^{-\mathbf{X}})^2wd\xi ds+C\tau u_m^4\int_{0}^t\int_{\mathbb{R}}(\partial_{\xi}^{k+1}\phi^{-\mathbf{X}})^2wd\xi ds,
		\end{align*}
		and
		\begin{align*}
			&-\tau\int_0^t\int_{\mathbb{R}}\partial_{\xi}^kr^{-\mathbf{X}}\partial_{\xi}^{k+2}r^{-\mathbf{X}}wd\xi ds\leq\tau\int_{0}^t\int_{\mathbb{R}}(\partial_{\xi}^{k+1}r^{-\mathbf{X}})^2wd\xi ds+C\tau u_m^4\int_0^t\int_{\mathbb{R}}(\partial_{\xi}^kr^{-\mathbf{X}})^2wd\xi,
		\end{align*}
		\begin{align*}
			&\tau\int_0^t\dot{\mathbf{X}}(s)\int_{\mathbb{R}}\partial_{\xi}^kr^{-\mathbf{X}}\partial_{\xi}^{k+2}(u^S+u^R)wd\xi ds\\
			\leq&\tau u_m^2\int_{0}^t\left|\dot{\mathbf{X}}(s)\right|^2ds+\tau\left(Cu_m^{12}+C\delta_R\right)\int_{0}^t\int_{\mathbb{R}}(\partial_{\xi}^kr^{-\mathbf{X}})^2wd\xi ds.
		\end{align*}
		Notice that 
		\begin{align*}
			\tau\int_{\mathbb{R}}\partial_{\xi}^kr^{-\mathbf{X}}\partial_{\xi}^{k+1}\phi^{-\mathbf{X}}wd\xi\leq \varepsilon_*\tau\int_{\mathbb{R}}(\partial_{\xi}^kr^{-\mathbf{X}})^2wd\xi+C(\varepsilon_*)\tau\int_{\mathbb{R}}(\partial_{\xi}^{k+1}\phi^{-\mathbf{X}})^2wd\xi.
		\end{align*}
		For $k=0$, we notice that 
		\begin{align*}
			&\tau\int_0^t\int_{\mathbb{R}}r^{-\mathbf{X}}\left(f(\phi^{-\mathbf{X}}+\tilde{u})-f(\tilde{u})\right)_{\xi\xi}wd\xi ds\\
			=&6\tau\int_0^t\int_{\mathbb{R}}(\tilde{u}_{\xi})^2r^{-\mathbf{X}}\phi^{-\mathbf{X}}wd\xi ds+12\tau\int_{0}^t\int_{\mathbb{R}}(\phi^{-\mathbf{X}}+\tilde{u})\tilde{u}_{\xi}r^{-\mathbf{X}}\phi^{-\mathbf{X}}_{\xi}wd\xi ds\\
			&+6\tau\int_{0}^t\int_{\mathbb{R}}(\phi^{-\mathbf{X}}+\tilde{u})(\phi^{-\mathbf{X}}_{\xi})^3wd\xi ds+3\tau\int_{0}^t\int_{\mathbb{R}}(\phi^{-\mathbf{X}}+2\tilde{u})\tilde{u}_{\xi\xi}r^{-\mathbf{X}}\phi^{-\mathbf{X}}wd\xi ds\\
			&-6\tau\int_{0}^t\int_{\mathbb{R}}(\phi^{-\mathbf{X}}+\tilde{u})(\phi_{\xi}^{-\mathbf{X}}+\tilde{u}_{\xi})r^{-\mathbf{X}}\phi_{\xi}^{-\mathbf{X}}wd\xi ds\\
			&-3\tau\int_{0}^t\int_{\mathbb{R}}(\phi^{-\mathbf{X}}+\tilde{u})^2r^{-\mathbf{X}}\phi_{\xi}^{-\mathbf{X}}w'u_{\xi}^Sd\xi ds-3\tau\int_{0}^t\int_{\mathbb{R}}(\phi^{-\mathbf{X}}+\tilde{u})^2r_{\xi}^{-\mathbf{X}}\phi_{\xi}^{-\mathbf{X}}wd\xi ds\\
			\leq& \tau(Cu_m^6+A_1)\int_0^t\int_{\mathbb{R}}(r^{-\mathbf{X}})^2wd\xi ds+\tau(Cu_m^2+A_1)u_m^2\int_0^t\int_{\mathbb{R}}(\phi^{-\mathbf{X}})^2u^S_{\xi}d\xi ds\\
			&+\tau(Cu_m^3+A_1)\int_0^t\int_{\mathbb{R}}(\phi^{-\mathbf{X}})^2wu^R_{\xi}d\xi ds+\tau(Cu_m^2+A_1)\int_0^t\int_{\mathbb{R}}(r_{\xi}^{-\mathbf{X}})^2wd\xi ds\\
			&+\tau\left(Cu_m^2+A_1+C\|\phi_{\xi}^{-\mathbf{X}}\|_{L^{\infty}(\mathbb{R})}\right)\int_0^t\int_{\mathbb{R}}(\phi_{\xi}^{-\mathbf{X}})^2wd\xi ds+Cu_m^2\delta_R^{\frac{2}{3}}.
		\end{align*}
		Similarly, for $k=1$, we have
		\begin{align*}
			&\tau\int_0^t\int_{\mathbb{R}}r_{\xi}^{-\mathbf{X}}\left(f(\phi^{-\mathbf{X}}+\tilde{u})-f(\tilde{u})\right)_{\xi\xi\xi}wd\xi ds\\
			\leq&\tau(Cu_m^3+A_1)u_m^2\int_0^t\int_{\mathbb{R}}(\phi^{-\mathbf{X}})^2u^S_{\xi}d\xi ds+\tau\left(Cu_m^3+C\delta_R\right)\int_0^t\int_{\mathbb{R}}(\phi^{-\mathbf{X}})^2wu^R_{\xi}d\xi ds\\
			&+\tau\left(Cu_m^3+A_1+C\|\phi_{\xi}^{-\mathbf{X}}\|_{L^{\infty}(\mathbb{R})}\right)\int_0^t\int_{\mathbb{R}}(r_{\xi}^{-\mathbf{X}})^2wd\xi ds\\
			&+\tau\left(Cu_m^2+A_1+C\|\phi_{\xi}^{-\mathbf{X}}\|_{L^{\infty}(\mathbb{R})}\right)\int_0^t\int_{\mathbb{R}}(\phi_{\xi}^{-\mathbf{X}})^2wd\xi ds\\
			&+\tau(Cu_m^2+A_1)\int_0^t\int_{\mathbb{R}}(r_{\xi\xi}^{-\mathbf{X}})^2wd\xi ds+C\delta_R^{\frac{2}{3}}+\tau(Cu_m^2+A_1)\int_0^t\int_{\mathbb{R}}(\phi_{\xi\xi}^{-\mathbf{X}})^2wd\xi ds.
		\end{align*}
		For $N_3^k$, we have
		\begin{align*}
			N_3^k&\leq\frac{1}{2}\int_{0}^t\int_{\mathbb{R}}(\partial_{\xi}^kr^{-\mathbf{X}})^2wd\xi ds+\frac{1}{2}\int_{0}^t\int_{\mathbb{R}}(\partial_{\xi}^{k+1}\phi^{-\mathbf{X}})^2wd\xi ds.
		\end{align*}
		Since $\tilde{q}_{\xi}=q^S_{\xi}+u^R_{\xi\xi}$, we get
		\begin{align*}
			N_4^k=&\tau\int_{0}^t\dot{\mathbf{X}}(s)\int_{\mathbb{R}}\partial_{\xi}^{k+1}\tilde{q}\partial_{\xi}^{k+1}\phi^{-\mathbf{X}}wd\xi ds\\
			\leq&\tau u_m^2\int_{0}^t\left|\dot{\mathbf{X}}(s)\right|^2ds+\tau\left(Cu_m^7+C\delta_R\right)\int_{0}^t\int_{\mathbb{R}}(\partial_{\xi}^{k+1}\phi^{-\mathbf{X}})^2wd\xi ds,
		\end{align*}
		and
		\begin{align*}
			N_5^k=&\tau\int_{0}^t\int_{\mathbb{R}}\partial_{\xi}^{k+1}(u^R)_t\partial_{\xi}^{k+1}\phi^{-\mathbf{X}}wd\xi ds
			\leq C\tau u_m^2\int_{0}^t\int_{\mathbb{R}}(\partial_{\xi}^{k+1}\phi^{-\mathbf{X}})wd\xi ds+Cu_m^2\delta_R^{\frac{2}{3}}.
		\end{align*}
		Without loss of generality, let $\delta_{0}=4u_m,\ \varepsilon_{4}=\delta_{0}^2$, by Lemma \ref{lem2}, Lemma \ref{lem9} is then proved by integrating the above inequalities with respect to $t$.
	\end{proof}
	Finally, Combining Lemma \ref{lem2} with Lemmas \ref{lem8} and \ref{lem9}, the parameters $k_1,k_2,\tau,\varepsilon_*$ are chosen to satisfy
	\begin{align}\label{nk1k2}
		\begin{cases}
			2k_2\varepsilon_*\leq\frac{1}{4},\\
			2k_2C(\varepsilon_*)\tau=\frac{k_2\tau}{2\varepsilon_*}\leq \frac{1}{2},\\
			2k_2\tau(Cu_m^3+A_1)\leq \frac{3}{2}u_m,\\
			2k_2u_m^2(Cu_m^2+A_1)\leq \frac{2u_m^3}{5},\\
			2k_2(\frac{8}{7}+3\tau+C\tau u_m^4+\tau A_1)<2-2k_1, 
		\end{cases}
	\end{align}
	where $A_1$ is given by Lemma \ref{lem8}. There exists a family of solutions $(k_1,k_2,\tau,\varepsilon_*)=(\frac{1}{1574},\frac{1513}{1800},\frac{1}{1574},\frac{225}{1513})$ for which both inequalities \eqref{k1k2} and \eqref{nk1k2} are satisfied.
	
	Notice that $\|\phi\|_{L^{\infty}(\mathbb{R})}=\|\phi^{-\mathbf{X}}\|_{L^{\infty}(\mathbb{R})}$, $\|\phi_{\xi}\|_{L^{\infty}(\mathbb{R})}=\|\phi_{\xi}^{-\mathbf{X}}\|_{L^{\infty}(\mathbb{R})}$ and $$\sup_{0\leq t\leq T} \|\phi^{-\mathbf{X}}, \sqrt{\tau}r^{-\mathbf{X}}\|_{H^2(\mathbb{R})}=\sup_{0\leq t\leq T} \|\phi, \sqrt{\tau}r\|_{H^2(\mathbb{R})}.$$  Changing of variable $\xi\to\xi+\mathbf{X}(t)$, we take $\delta_{1}=\delta_{0}=4u_m$ and set $\varepsilon_{1}=\delta_{0}^2$. This completes the proof of Proposition \ref{p1}.
	

\begin{thebibliography}{99}
		\bibitem{ref31}Y. Bai, L. Fan and H. Zhao, Nonlinear stability of viscous shock profiles for a hyperbolic system with Cattaneo's law in one-dimensional half space, Math. Methods Appl. Sci. {\bf 47} (2024), 5207-5242.
		\bibitem{ref8}Y. Bai, L. He and H. Zhao, Nonlinear stability of rarefaction waves for a hyperbolic system with Cattaneo's law, Commun. Pure Appl. Anal. {\bf 20} (2021), 2441-2474.
		\bibitem{ref29}C. Cattaneo, Sulla conduzione del calore, Atti Sem. Mat. Fis. Univ. Modena {\bf 3} (1949), 83-101.
		\bibitem{ref3}J. Deng and L. Zhang, Nonlinear stability of viscous shock profiles for a hyperbolic system with Cattaneo's law and non-convex flux, Nonlinear Anal. {\bf 254} (2025).
		\bibitem{ref23}Y. Hu, R. Racke and N. Wang, Formation of singularities for one-dimensional relaxed compressible Navier-Stokes equations, J. Differential Equations {\bf 327} (2022), 145-165.
		\bibitem{ref26}Y. Hu and N. Wang, Global existence versus blow-up results for one dimensional compressible Navier-Stokes equations with Maxwell's law, Math. Nachr. {\bf 292} (2019), 826-840.
		\bibitem{ref15}H.~D. Fern\'andez~Sare and R. Racke, On the stability of damped Timoshenko systems: Cattaneo versus Fourier law, Arch. Ration. Mech. Anal. {\bf 194} (2009), 221-251.
		\bibitem{ref17}H. Freist\"uhler and D. Serre, $L^1$ stability of shock waves in scalar viscous conservation laws, Comm. Pure Appl. Math. {\bf 51} (1998), 291-301.
		\bibitem{ref44}Y. Hattori and K. Nishihara, A note on the stability of the rarefaction wave of the Burgers equation, Japan J. Indust. Appl. Math. {\bf 8} (1991), 85-96.
		\bibitem{ref16}F. Huang, Y. Wang and J. Zhang, Time-asymptotic stability of composite waves of degenerate Oleinik shock and rarefaction for non-convex conservation laws, Math. Ann. {\bf 392} (2025), 1-46.
		\bibitem{ref18}F. Huang and L. Xu, Decay rate toward the traveling wave for scalar viscous conservation law, Commun. Math. Anal. Appl. {\bf 1} (2022), 395-409.
		\bibitem{ref9}A.~M. Il' in and O.~A. Ole\u inik, Asymptotic behavior of solutions of the Cauchy problem for some quasi-linear equations for large values of the time, Mat. Sb. (N.S.) {\bf 51(93)} (1960), 191-216.
		\bibitem{ref12}C.~K.~R.~T. Jones, R.~A. Gardner and T.~M. Kapitula, Stability of travelling waves for non-convex scalar viscous conservation laws, Comm. Pure Appl. Math. {\bf 46} (1993), 505-526.
		\bibitem{ref28}Y.~I. Kanel', A model system of equations for the one-dimensional motion of a gas, Differencial' nye Uravnenija {\bf 4} (1968), 721-734.
		\bibitem{ref1}M.-J. Kang and A.~F. Vasseur, $L^2$-contraction for shock waves of scalar viscous conservation laws, Ann. Inst. H. Poincar\'e{} C Anal. Non Lin\'eaire {\bf 34} (2017), 139-156.
		\bibitem{ref2}M.-J. Kang, $L^2$-type contraction for shocks of scalar viscous conservation laws with strictly convex flux, J. Math. Pures Appl. (9) {\bf 145} (2021), 1-43.
		\bibitem{ref20}M.-J. Kang and A.~F. Vasseur, Contraction property for large perturbations of shocks of the barotropic Navier-Stokes system, J. Eur. Math. Soc. (JEMS) {\bf 23} (2021), 585-638.
		\bibitem{ref21}M.-J. Kang, A.~F. Vasseur and Y. Wang, $L^2$-contraction of large planar shock waves for multi-dimensional scalar viscous conservation laws, J. Differential Equations {\bf 267} (2019), 2737-2791.
		\bibitem{ref32}M.-J. Kang and A.~F. Vasseur, Uniqueness and stability of entropy shocks to the isentropic Euler system in a class of inviscid limits from a large family of Navier-Stokes systems, Invent. Math. {\bf 224} (2021), 55-146.
		\bibitem{ref41}M.-J. Kang and A.~F. Vasseur, Contraction property for large perturbations of shocks of the barotropic Navier-Stokes system, J. Eur. Math. Soc. (JEMS) {\bf 23} (2021), 585-638. 
		\bibitem{ref40}T. Kato, The Cauchy problem for quasi-linear symmetric hyperbolic systems, Arch. Rational Mech. Anal. {\bf 58} (1975), 181-205.
		\bibitem{ref5}T. Nakamura and S. Kawashima, Viscous shock profile and singular limit for hyperbolic systems with Cattaneo's law, Kinet. Relat. Models {\bf 11} (2018), 795-819.
		\bibitem{ref7}K. Nakamura, T. Nakamura and S. Kawashima, Asymptotic stability of rarefaction waves for a hyperbolic system of balance laws, Kinet. Relat. Models {\bf 12} (2019), 923-944.
		\bibitem{ref34}S. Kawashima and A. Matsumura, Stability of shock profiles in viscoelasticity with non-convex constitutive relations, Comm. Pure Appl. Math. {\bf 47} (1994), 1547-1569.
		\bibitem{ref27}N. Leger, $L^2$ stability estimates for shock solutions of scalar conservation laws using the relative entropy method, Arch. Ration. Mech. Anal. {\bf 199} (2011), 761-778.
		\bibitem{ref33}N. Leger and A.~F. Vasseur, Relative entropy and the stability of shocks and contact discontinuities for systems of conservation laws with non-BV perturbations, Arch. Ration. Mech. Anal. {\bf 201} (2011), 271-302.
		\bibitem{ref45}T.-P. Liu, A. Matsumura and K. Nishihara, Behaviors of solutions for the Burgers equation with boundary corresponding to rarefaction waves, SIAM J. Math. Anal. {\bf 29} (1998), 293-308.
		\bibitem{ref10}T.-P. Liu and H. Wang, Viscous scalar rarefaction waves, SIAM J. Math. Anal. {\bf 49} (2017), 2061-2100.
		\bibitem{ref30}Liu, H.: Asymptotic stability of shock profiles for non-convex convection–diffusion equation. Appl.
		Math. Lett. {\bf10}(1)(1997), 129-134 .
		\bibitem{ref14}A. Matsumura and N. Yoshida, Asymptotic behavior of solutions to the Cauchy problem for the scalar viscous conservation law with partially linearly degenerate flux, SIAM J. Math. Anal. {\bf 44} (2012), 2526-2544.
		\bibitem{ref24}A. Matsumura and K. Nishihara, Asymptotics toward the rarefaction waves of the solutions of a one-dimensional model system for compressible viscous gas, Japan J. Appl. Math. {\bf 3} (1986), 1-13.
		\bibitem{ref38}A. Matsumura and K. Nishihara, Asymptotic stability of traveling waves for scalar viscous conservation laws with non-convex nonlinearity, Comm. Math. Phys. {\bf 165} (1994), 83-96.
		\bibitem{ref37}A. Matsumura and M. Mei, Nonlinear stability of viscous shock profile for a non-convex system of viscoelasticity, Osaka J. Math. {\bf 34} (1997), 589-603.
		\bibitem{ref19} Mei, M.: Stability of shock profiles for non-convex scalar viscous conservation laws. Math. Mod. Methods Appl. Sci. {\bf5}, (1995) 279-296.
		\bibitem{ref46}T. Nakamura, Asymptotic decay toward the rarefaction waves of solutions for viscous conservation laws in a one-dimensional half space, SIAM J. Math. Anal. {\bf 34} (2003), 1308-1317.
		\bibitem{ref13}M. Nishikawa, Convergence rate to the traveling wave for viscous conservation laws, Funkcial. Ekvac. {\bf 41} (1998), 107-132.
		\bibitem{ref25}H.~F. Weinberger, Long-time behavior for a regularized scalar conservation law in the absence of genuine nonlinearity, Ann. Inst. H. Poincar\'e{} C Anal. Non Lin\'eaire {\bf 7} (1990), 407-425.
	\end{thebibliography}
\end{document}